\documentclass[a4paper,twoside,english,10pt]{amsart}
\usepackage{lmodern} 
\usepackage[margin=1.3in]{geometry} 
\usepackage{graphicx}             
\usepackage{amsmath}              
\usepackage{amsfonts}
\usepackage{amsthm}               
\usepackage{verbatim}             
\usepackage{indentfirst}           
\usepackage{underscore}
\usepackage{enumitem}
\usepackage{mathtools} 
\usepackage{hyperref}
\usepackage{amssymb}
\usepackage{xcolor}
\usepackage[all]{xy}
\usepackage[utf8]{inputenc}
\makeatletter
\def\l@section{\@tocline{1}{4pt}{1pc}{}{}}
\def\l@subsection{\@tocline{2}{4pt}{2.5pc}{5pc}{}}
\def\l@subsubsection{\@tocline{3}{4pt}{4pc}{5pc}{}} 
\makeatother

\makeatletter
\let\orig@contentsline\contentsline
\renewcommand{\contentsline}[4]{%
  \begingroup
    \def\@tempa{#1}\def\@tempb{section}%
    \ifx\@tempa\@tempb \let\Hy@linktoc\Hy@linktoc@section
    \else \let\Hy@linktoc\Hy@linktoc@none \fi
    \orig@contentsline{#1}{#2}{#3}{#4}%
  \endgroup}
\newcommand{\toc@numlink}[1]{%
  \ifx\indentlabel\@tochangmeasure #1%
  \else\Hy@toclinkstart#1\Hy@toclinkend\fi}
\renewcommand{\tocsubsection}[3]{%
  \indentlabel{\@ifnotempty{#2}{\ignorespaces#1 \toc@numlink{#2}.\quad}}#3}

\makeatother

\newtheorem{thm}{Theorem}
\newtheorem{lem}[thm]{Lemma}
\newtheorem{prop}[thm]{Proposition}
\newtheorem{cor}[thm]{Corollary}

\newtheorem{question}[thm]{Question}
\theoremstyle{remark}
       
\theoremstyle{remark}

       \theoremstyle{definition} 
\newtheorem{defin}[thm]{Definition}

\theoremstyle{remark}
\newtheorem{remark}[thm]{Remark}

\numberwithin{thm}{section}
\numberwithin{equation}{thm}

\newcommand{\Mod}[1]{\ (\textup{mod}\ #1)}

  \newcommand{\SL}{\operatorname{SL}}
    \newcommand{\GL}{\operatorname{GL}}

  \newcommand{\Hom}{\operatorname{Hom}}

\newcommand{\Pic}{\operatorname{Pic}} 
\newcommand\scalemath[2]{\scalebox{#1}{\mbox{\ensuremath{\displaystyle #2}}}}  
\usepackage{amssymb,fge}

\title[growth in partially commutative semigroups of maps]{arithmetic growth in partially commutative semigroups of endomorphisms}
\author{Wade Hindes}
\begin{document}
\begin{abstract} We establish asymptotics for the degree and height growth rate functions of partially commutative semigroups of endomorphisms of varieties, i.e., semigroups where the only relations between generators, if any, are commuting ones. To do this, we prove a general result connecting the growth rates of two abstract functions, one multiplicative and one simply nonnegative, subject to a Tate telescoping hypothesis and a sufficiently nice normal form on the underlying semigroup. Finally, we construct several new families of endomorphisms on which our asymptotics apply. To do this, we generalize the standard ping-pong lemma for elements to a ping-pong lemma for subsemigroups.           
\end{abstract}
\maketitle
\tableofcontents
\section{Introduction}\label{sec:intro}
Given a variety $V$ defined over a number field $K$ and a suitable logarithmic height function $h: V(\overline{K})\rightarrow \mathbb{R}$, the growth rate of the counting function, 
\begin{equation}\label{eq:h+count+variety}
\#\{P\in V(K)\,: h(P)\leq x\},
\end{equation} 
can be a useful tool for detecting important invariants of $V$ and $K$. For example, when $V$ is projective space \cite{schanuel1979heights}, then \eqref{eq:h+count+variety} detects the dimension of $V$ and the class number, the regulator, and the number of roots of unity of $K$. When $V$ is an abelian variety \cite{neron1965quasi}, then  \eqref{eq:h+count+variety} detects the rank of the Mordell-Weil group, the size of the torsion subgroup, and the regulator of $V(K)$. When $V$ is a curve, then \eqref{eq:h+count+variety} detects the genus of $V$. For additional examples, including the Batyrev-Manin conjecture, see  \cite{baragar1996rational,batyrev1990nombre,franke11996rational,peyre1995hauteurs,silverman1991rational}. On the other hand, a general heuristic in arithmetic dynamics is that many of the tools, theorems, and conjectures in arithmetic geometry have dynamical analogs, once we replace the arithmetic object (such as the set $V(K)$ of rational points) with a suitable dynamical object (such as a dynamical orbit); see \cite{benedetto2019current} for a discussion of several examples of this heuristic, including problems related to uniform boundedness, Galois representations, moduli space, and unlikely intersections.

To illustrate this analogy, it was shown in \cite{MR4988033} that for \emph{free} polarized semigroups of endomorphisms, dynamical orbits exhibit similar height-growth to rational points on abelian varieties: namely \eqref{eq:h+count+variety} and its dynamical counterpart grow like $cx^{\rho}$ for some $c,\rho>0$. Moreover, on the projective line at least, one should expect most semigroups to be free; see, for instance, \cite{beaumont2025uniformtitsalternativeendomorphisms,MR4780496} and \cite[Proposition 19]{hindes2023size}. On the other hand, there are many naturally occurring dynamical semigroups that are not free --- for example, both projective space and abelian varieties exhibit many commuting maps. Moreover, the study of commuting maps has a long history in dynamics, going back to Julia \cite{julia1922memoire}, Ritt \cite{ritt1923permutable}, and Fatou \cite{fatou1923substitutions}. In particular, in this paper we study a dynamical analog of \eqref{eq:h+count+variety} for partially commutative semigroups, i.e., finitely generated semigroups where the only relations between generators, if any, are commuting ones. 

To state our main result, we fix some notation. In what follows, $\mathcal{S}=\langle f_1,\dots, f_r\rangle$ is a finitely generated semigroup of finite endomorphisms of a variety $V$ under composition. Moreover, we assume that $\mathcal{S}$ is \textbf{\emph{partially commutative}}, that is, $\mathcal{S}$ is isomorphic to the free semigroup generated by the alphabet of $f$'s modulo a finite collection of relations of the form $f_i\circ f_j=f_j\circ f_i$ for some distinct $i$ and $j$. In particular, we can associate to $\mathcal{S}$ a finite simple graph $\mathcal{G}=\mathcal{G}(\mathcal{S})$ with vertices $f_1,\dots, f_r$ and edges $\{f_i,f_j\}$ with $i\neq j$ whenever $f_i$ and $f_j$ commute. We call $\mathcal{G}$ the \textbf{\emph{commutation graph}} of $\mathcal{S}$. Likewise, we let $\overline{\mathcal{G}}$ denote the complement graph of $\mathcal{G}$ so that $\overline{\mathcal{G}}$ has the same vertex set as $\mathcal{G}$, has a loop at every vertex, and has an edge between distinct $f_i$ and $f_j$ whenever $f_i$ and $f_j$ do not commute. 

Furthermore, we let $\mathcal{L}$ denote an ample invertible sheaf on $V$ and let $h_{\mathcal{L}}$ denote an associated Weil height function. We say that $\mathcal{S}$ is \textbf{\emph{polarized}} by $\mathcal{L}$ if for all $F\in\mathcal{S}$ there exists an integer $d(F)\geq2$ such that $F^*(\mathcal{L})\cong L^{\otimes d(F)}$. In particular, we note that when $V=\mathbb{P}^N$ is projective space, every semigroup of non-linear endomorphisms is polarized. 

Finally, we say that a point $P\in V$ is \textbf{\emph{preperiodic}} for $\mathcal{S}$ if there exist $g\in\mathcal{S}$ and $f\in \mathcal{S}\cup\{\textup{Id}_V\}$ such that $g\circ f(P)=f(P)$. Then we have the following theorem:            
\begin{thm}\label{thm:main} Let $(\mathcal{S},\mathcal{G})$ be a finitely generated partially commutative semigroup and let $d: \mathcal{S}\rightarrow \mathbb{Z}_{\geq2}$ be a multiplicaitve function. 
Moreover, assume that the complement graph $\overline{\mathcal{G}}$ is connected and that the image $d(\mathcal{S})$ is not contained in a cyclic semigroup. Then there are positive constants $c,\rho>0$, depending on $\mathcal{S}$ an $d$, such that 
\begin{equation}\label{d-count}
\#\{F\in \mathcal{S}\,: d(F)\leq x\}=cx^{\rho}+o(x^\rho).
\end{equation}
In particular, if $\mathcal{S}$ is a semigroup of endomorphisms on a variety $V$, then \eqref{d-count} holds for all of the following examples of $d$:
 \vspace{.15cm}
 \begin{enumerate}
 \item[\textup{(1)}] $\mathcal{S}$ is generated by finite maps of degree at least $2$ and $d(F)=\deg(F)$. 
 \vspace{.15cm}
 \item[\textup{(2)}] $\mathcal{S}$ is polarized by $\mathcal{L}$ and $d(F)\geq2$ is given by  $F^*(\mathcal{L})\cong L^{\otimes d(F)}$.
 \vspace{.15cm}
 \item[\textup{(3)}] $\mathcal{S}$ is generated by finite maps with a common super-attracting fixed point $P\in V$ and $d(F)=\deg_P(F)$ is the local degree at $P$.
 \vspace{.15cm}
 \end{enumerate}
 Furthermore, when $\mathcal{S}$ is polarized and $\mathcal{S}$ and $\mathcal{L}$ are defined over a number field $K$, then for all non-preperioidc points $P\in V(\overline{K})$ there is a constant $c(P,\mathcal{L})>0$ such that
 \vspace{.15cm}
 \begin{equation}\label{h-count}
\#\{F\in \mathcal{S}\,: h_{\mathcal{L}}(F(P))\leq x\}=c(P,\mathcal{L})x^{\rho}+o(x^{\rho}).
 \vspace{.15cm}
 \end{equation} 
 Here $h_{\mathcal{L}}: V(\overline{K})\rightarrow\mathbb{R}_{\geq0}$ is a Weil height function associated to $\mathcal{L}$ and $\rho$ is associated to the multiplicitive function $d$ given in \textup{(2)}.
 \end{thm}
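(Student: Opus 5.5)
The approach is to count elements of the partially commutative monoid through a normal form and a generating Dirichlet series, and then apply a Tauberian theorem. Elements of $\mathcal{S}$ correspond bijectively to traces, i.e.\ elements of the trace monoid $M(\mathcal{G})$ minus the identity. Since $d$ is multiplicative, the quantity $\#\{F: d(F)\le x\}$ is the summatory function of the Dirichlet series
\[
Z(s)=\sum_{F\in\mathcal{S}} d(F)^{-s}.
\]
The Cartier--Foata inversion formula for trace monoids gives the identity
\[
1+Z(s)=\frac{1}{P(s)},\qquad P(s)=\sum_{C}(-1)^{|C|}\prod_{f\in C} d(f)^{-s},
\]
where $C$ runs over the cliques of $\mathcal{G}$ (sets of pairwise commuting generators), including the empty clique. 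So $Z$ is meromorphic on all of $\mathbb{C}$. Its abscissa of convergence $\rho$ is the smallest positive real zero of $P(s)$. This is also the first real $s$ at which the spectral radius of the transfer matrix of the Cartier--Foata (or lexicographic) normal form equals $1$.

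The key steps, in order, are as follows.

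(i) Express $\mathcal{S}$ through a normal form recognized by a finite automaton. The transitions are weighted by $d(f)^{-s}$, so that $Z(s)=u^{T}(I-A(s))^{-1}v$ for a nonnegative matrix $A(s)$.

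(ii) Use connectedness of $\overline{\mathcal{G}}$ to show that the relevant part of $A(s)$ is irreducible and aperiodic. Connectedness means the letters cannot be split into two mutually commuting blocks, so normal forms can pass from any letter to any other. Perron--Frobenius then gives a simple dominant eigenvalue $\lambda(s)$, strictly decreasing in real $s$. Hence $\rho$ is the unique positive real solution of $\lambda(\rho)=1$, and $Z$ has a simple pole there with positive residue.

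(iii) Show that $Z$ has no other poles on the line $\Re s=\rho$. This is where the hypothesis that $d(\mathcal{S})$ is not contained in a cyclic semigroup enters. If $\lambda(\rho+it)$ had modulus $1$, the equality case of Perron--Frobenius (Wielandt) would force $d(f)^{-it}$ to take consistent values along all cycles. Combined with irreducibility, this forces every $d(f)^{it}=e^{i\theta}$ with a common $\theta$ along cycle weights. That in turn forces all $\log d(f)$ to lie in a common lattice $\frac{2\pi}{t}\mathbb{Z}+\text{shift}$, which contradicts the non-cyclicity of $d(\mathcal{S})$. The contradiction uses the fact that two multiplicatively independent values cannot both be powers of a common base.

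(iv) Apply the Wiener--Ikehara theorem to obtain $cx^{\rho}+o(x^{\rho})$. Here $c$ is the residue at $\rho$ divided by $\rho$.

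I expect step (iii) to be the main obstacle, namely the careful equality-case analysis that converts aperiodicity in the multiplicative sense into the non-cyclic condition. Examples (1)–(3) are then immediate, since degree, polarization degree and local degree at a common super-attracting fixed point are each multiplicative under composition and are at least $2$.

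For the height count \eqref{h-count}, I would use Tate telescoping. Polarization gives $h_{\mathcal{L}}\circ f=d(f)h_{\mathcal{L}}+O(1)$. Telescoping along the normal form defines, for each $F$, a canonical-type height $\hat h(F,P)$ with $h_{\mathcal{L}}(F(P))=d(F)\,\hat h_{\omega}(P)+O(1)$. Here $\hat h_\omega$ depends on the infinite tail of the word and is bounded below away from zero, using that $P$ is not preperiodic. I would then redo the transfer-operator argument with weights $(d(F)\hat h_\omega(P))^{-s}$. Equivalently, I would decompose by the final long block of the normal form and use the fact that $\hat h$ depends continuously (with exponential decay) on the suffix. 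This yields $c(P,\mathcal{L})x^{\rho}$ with the same $\rho$ as in (2). The $O(1)$ errors are absorbed because they perturb $x$ by a factor $1+o(1)$ for all but $o(x^\rho)$ elements.
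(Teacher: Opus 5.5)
Your overall route is the paper's: Cartier--Foata normal forms become walks in the clique digraph, which is strongly connected and has a loop at every clique. You then use a weighted transfer matrix $M(s)$, Perron--Frobenius at real $s$, and a Wiener--Ikehara/Delange Tauberian step. Your M\"obius identity $1+Z=1/P$ is a valid substitute for Cramer's rule. However, step (iii), which is where the non-cyclicity hypothesis enters, is wrong as stated.

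You start only from $|\lambda(\rho+it)|=1$. Wielandt's equality case then gives $M(\rho+it)=e^{i\phi}DM(\rho)D^{-1}$ with a possibly nontrivial phase, i.e.\ $d(\mathfrak{c})^{-it}=e^{i\phi}$ for every clique. You then claim that a common shift $t\log d(f)\in\theta+2\pi\mathbb{Z}$ contradicts non-cyclicity. It does not. Take the free semigroup on $f_1,f_2$ with $d(f_1)=2$ and $d(f_2)=3$; here $\mathcal{G}$ is edgeless, so $\overline{\mathcal{G}}$ is connected. Then $M(s)$ has rank one with nonzero eigenvalue $2^{-s}+3^{-s}$. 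For $t=2\pi/\log(3/2)$ one has $3^{-it}=2^{-it}\neq 1$, so $M(\rho+it)$ has the unimodular eigenvalue $2^{-it}$. The statement your argument aims at (no unimodular eigenvalue off the real axis) is therefore false, and no contradiction can be extracted from it. If $\mathcal{G}$ has an edge $\{f,g\}$, the loop at the clique $\{f,g\}$ forces $e^{i\phi}=1$, so the failure is exactly the edgeless (free) case.

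The missing point is that a pole of $Z$ at $\rho+it$ requires $\det\bigl(I-M(\rho+it)\bigr)=0$, equivalently $P(\rho+it)=0$. So the eigenvalue must be exactly $1$, not merely unimodular. Then Wielandt (the paper's Lemma~\ref{lem:critical+line+poles}) gives $M(\rho+it)=DM(\rho)D^{-1}$ with no phase. Comparing diagonal entries, which are nonzero because of the loops, gives $d(\mathfrak{c})^{it}=1$ for every clique, in particular $d(f)^{it}=1$ for each generator. Only now does $t\neq 0$ force $\log d(f)/\log d(g)\in\mathbb{Q}$, so that all $d(f)$ are powers of one integer (Lemma~\ref{lem:mult_dep}).

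Three smaller points:
\begin{itemize}
\item $\rho$ is the \emph{largest} real zero of $P$, since $P=1/(1+Z)>0$ on $(\rho,\infty)$. It need not be the smallest positive one.
\item For the height count, your weights $(d(F)\hat h_\omega(P))^{-s}$ are not determined by $F$ and are not multiplicative. Only your ``equivalent'' route works, and that route is the paper's Theorem~\ref{thm:d+to+h}: split $\mathcal{S}_{\geq L}$ into suffix classes $\mathcal{C}_G$ with $G\in\mathcal{T}_L$, use that $h/d$ varies by a factor $1\pm O(1/L)$ on each class, sandwich, and let $L\to\infty$.
\item That route needs $\#\{F\in\mathcal{C}_G: d(F)\leq x\}=c_Gx^{\rho}+o(x^{\rho})$, with the same $\rho$, on every suffix class, i.e.\ for walks with a prescribed terminal segment. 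Your steps (i)--(iv) only count the whole semigroup. The missing count follows from the same pole analysis applied to $Z_\eta(s)=d(\eta)^{-s}\bigl(1+\delta(s)(I-M(s))^{-1}\nu(\eta)\bigr)$, which has the same denominator.
\end{itemize}
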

\begin{remark} When $\mathcal{S}$ as in Theorem \ref{thm:main} is a free semigroup, the commutation graph $\mathcal{G}=E_r$ is the empty graph with $r\geq2$ vertices and no edges. Hence, the complement graph $\overline{\mathcal{G}}=K_r$ is the complete graph with $r$ vertices. In particular, $\overline{\mathcal{G}}$ is connected in this case, and so we recover the main statement of \cite[Theorem 1.1]{MR4988033}.         
\end{remark}
\begin{remark} We note that if $\mathcal{S}$ is any semigroup of finite maps of degree at least $2$ on a topological space $V$, then the degree count \eqref{d-count} gives an \emph{intrinsic} measure of the growth rate of the semigroup $\mathcal{S}$, i.e., a growth rate that does not depend on the generating set. In particular, \eqref{d-count} may be of independent interest, not just to those experts studying heights.     
\end{remark}

\begin{remark}
Partially commutative semigroups (and monoids) appear in many contexts, and we mention a few, with no attempt at completeness. They were introduced by Cartier and Foata~\cite{MR239978} in their study of rearrangements of words. Later, Viennot's theory of heaps of pieces~\cite{Viennot1986} gave them a geometric model that has become a useful tool in enumerative combinatorics. In computer science, they are known as \emph{trace monoids}. Mazurkiewicz~\cite{Mazurkiewicz1977} proposed them as a model of concurrent processes, in which commuting letters represent actions that can occur independently, and they have since become a standard tool in the theory of concurrency (see, for instance, Diekert's monograph~\cite{Diekert1990}). In semigroup theory, they are called \emph{free partially commutative monoids}
or \emph{graph monoids}, and they are the basic examples of graph products of monoids; their algebraic properties, such as conjugacy, Green's relations, and the word problem, have been studied in~\cite{Duboc1986, DuchampKrob1993, FountainKambites2009,
daCosta2001} and in many other places. They also appear in topology and dynamics: for example, McMullen~\cite{McMullen2015} used the weighted growth rates of these objects, which he calls right-angled Artin semigroups, to bound spectral radii of Perron--Frobenius matrices. Further references are given below where they are used.
\end{remark}
Next we classify the possible partially commutative semigroups that can be obtained by composing non-linear rational maps of the projective line.   
\begin{thm}\label{thm:classification+intro} Let $K$ be a field of characteristic zero and let $\mathcal{S}$ be a finitely generated partially commutative semigroup. Then $\mathcal{S}$ can be embedded as a semigroup of non-linear rational functions in K(x) under composition if and only if $\mathcal{S}$ is a free product of free commutative semigroups (i.e., $\mathcal{G}(\mathcal{S})$ is a disjoint union of complete subgraphs).        
\end{thm}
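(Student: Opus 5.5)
The plan is to reduce the ``only if'' direction to one statement about three generators, and to prove the ``if'' direction by an explicit construction combined with a ping-pong argument.

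\emph{Necessity.} Suppose $\mathcal{S}=\langle f_1,\dots,f_r\rangle$ embeds into the non-linear elements of $K(x)$. Then $\mathcal{G}(\mathcal{S})$ is a disjoint union of cliques exactly when commutation is transitive on distinct generators. So it suffices to show: if $f_i$ commutes with both $f_j$ and $f_k$, then $f_j$ and $f_k$ commute.

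\begin{enumerate}
\item[(i)] \emph{A combinatorial fact about trace monoids.} If $f_j,f_k$ are not adjacent in $\mathcal{G}$, then no relation of the form $f_j^{a}=f_k^{b}$ or $f_j^{a}\circ f_k^{b}=f_k^{b}\circ f_j^{a}$ with $a,b\geq1$ holds in $\mathcal{S}$. I would prove this from the Cartier--Foata normal form, or more simply by projecting onto the free semigroup $\langle f_j,f_k\rangle$: erase all other letters, which respects the defining relations. In a free semigroup on two letters these relations plainly fail.
\item[(ii)] \emph{The dynamical input.} After extending $K$ to $\mathbb{C}$, apply Ritt's theorem on permutable rational functions, in the form refined by Eremenko. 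If $f_i$ is not conjugate to a power map, a $\pm$Chebyshev map, or a Latt\`es map, then every non-linear map commuting with $f_i$ shares an iterate with it. Hence $f_j^{a}=f_i^{b}$ and $f_k^{c}=f_i^{e}$ for some positive integers. This gives $f_j^{ae}=f_k^{cb}$, which contradicts (i) unless $f_j$ and $f_k$ commute.
\item[(iii)] \emph{The exceptional cases.} Conjugate $f_i$ into normal form. Maps commuting with $z^n$ are of the form $\zeta z^m$ with $\zeta$ a root of unity of order bounded in terms of $n$. Maps commuting with $\pm T_n$ are $\pm T_m$. Maps commuting with a Latt\`es map come from endomorphisms $u\mapsto \alpha u+\beta$ of the same elliptic curve, with $\beta$ torsion. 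In each family I compute directly that suitable powers commute: $f_j^{N}\circ f_k^{N}=f_k^{N}\circ f_j^{N}$ for some $N$, obtained by killing the finitely many roots of unity, signs, or torsion translations. Again (i) then forces $f_j$ and $f_k$ to commute.
\end{enumerate}

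\emph{Sufficiency.} Let $\mathcal{G}$ be a disjoint union of cliques $C_1,\dots,C_s$.
\begin{enumerate}
\item[(i)] \emph{Realising each clique.} Realise $C_t$ by $\phi_t^{-1}\circ z^{p}\circ\phi_t$, with $p$ ranging over $|C_t|$ distinct primes and $\phi_t$ a M\"obius map. These maps commute, and they generate a free commutative semigroup because their degrees are multiplicatively independent.
\item[(ii)] \emph{Ping-pong.} Choose the $\phi_t$ so that the super-attracting pairs $\{\phi_t^{-1}(0),\phi_t^{-1}(\infty)\}$ are pairwise far apart. Then the subsemigroups $A_t$ act with disjoint ``ping-pong'' regions: take small disjoint neighbourhoods $U_t$ of $\phi_t^{-1}(\infty)$. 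Each non-identity element of $A_t$ sends the complement of a small neighbourhood of $\phi_t^{-1}(0)$ into $U_t$. I would then apply the ping-pong lemma for subsemigroups mentioned in the abstract. If needed, I would replace the generators by higher powers, which does not change the isomorphism type, so that every $U_{t'}$ with $t'\neq t$ is mapped into $U_t$. The conclusion is that $\langle A_1,\dots,A_s\rangle\cong A_1*\cdots*A_s$.
\end{enumerate}

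\emph{Main obstacle.} I expect the hardest step to be the exceptional case (iii) of necessity, above all the Latt\`es case. There the centraliser is larger and non-abelian, and producing the uniform exponent $N$ requires a careful description of maps commuting with a given Latt\`es map. A secondary subtlety is making the ping-pong containment work for all elements of each $A_t$ at once, not only for the generators.
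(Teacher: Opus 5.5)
Your reduction to ruling out an induced $P_3$, the erasing map in (i), and the Ritt–Eremenko step (ii) are fine. The necessity argument breaks at step (iii), though. In the exceptional families it is not true that suitable powers of $f_j$ and $f_k$ commute, because the twisting data cannot be removed by iterating.

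Take $f_i=z^3$, $f_j=-z^2$, $f_k=z^2$. Both commute with $z^3$. But $f_j^{a}=-z^{2^{a}}$ and $f_k^{b}=z^{2^{b}}$ for all $a,b\geq1$, so $f_j^{a}\circ f_k^{b}=-z^{2^{a+b}}\neq z^{2^{a+b}}=f_k^{b}\circ f_j^{a}$, and also $f_j^{a}\neq f_k^{b}$. The same happens for $T_3,-T_2,T_2$, and for the Latt\`es maps induced by $u\mapsto 3u$, $u\mapsto 2u+\beta$, $u\mapsto 2u$ with $\beta$ a nonzero $2$-torsion point. So neither relation from (i) ever appears, even though $\langle f_j,f_k\rangle$ is not free (here $f_j\circ f_k=f_j\circ f_j$).

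To repair this, first note that your erasing map in (i) already shows $\langle f_j,f_k\rangle\cong\mathcal{F}(2)$ whenever $f_j,f_k$ are non-adjacent. Next, in each exceptional family the centralizer of $f_i$ has a homomorphism to a commutative semigroup: the exponent $\pm m$, the index of $\pm T_m$, or $\alpha\in\mathrm{End}(E)$ modulo units. Its fibers have size bounded in terms of $f_i$ alone, since the only extra data are finitely many roots of unity, a sign, or finitely many translations modulo the lattice. All $\binom{\ell}{a}$ words of length $\ell$ containing $a$ copies of $f_j$ have the same image. By pigeonhole, two distinct words define the same map, which contradicts freeness.

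This is a hands-on version of the paper's argument. The paper uses \cite[Corollary 4.12]{MR4780496} to turn the polynomial growth of $\langle f_1,f_2\rangle$ and $\langle f_2,f_3\rangle$ into $\textup{PrePer}(f_1)=\textup{PrePer}(f_2)=\textup{PrePer}(f_3)$. That in turn forces polynomial growth of the free semigroup $\langle f_1,f_3\rangle$, a contradiction, with no case analysis.

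The sufficiency ping-pong also fails as stated. In the coordinate $w=\phi_t(z)$, every element of $A_t$ is $w\mapsto w^{M}$ and preserves the circle $|w|=1$. So it does not send the complement of a small neighbourhood of $\phi_t^{-1}(0)$ into $U_t$; only regions $|w|\geq r$ with $r>1$ are pushed toward $\phi_t^{-1}(\infty)$. That is easy to adjust.

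The real problem is injectivity. For non-injective maps acting on points, disjoint target regions identify only the first syllable of a word, and you then need to cancel it. But $s\circ u=s'\circ u'$ does not force $s=s'$ and $u=u'$: already $z^{2}\circ u=z^{2}\circ(-u)$, and distinct elements of $A_t$ take common values at different points of any open set. This is exactly why Lemma~\ref{lem:ping+pong} requires hypothesis (2), injectivity of $(s,y)\mapsto s\cdot y$ on $\mathcal{S}_t\times Y_t$. That hypothesis fails for the action on $\hat{\mathbb{C}}$, so you cannot simply invoke it.

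The paper avoids this by letting $\mathcal{S}$ act on monic nonconstant polynomials by $P\mapsto F\circ P$, with $f_{ij}=(x+b_i)^{d_{ij}}-b_i$ and $X_i=\{g^{e}-b_i\}$. Mason's $abc$ theorem makes the $X_i$ disjoint. Uniqueness of monic $n$-th roots gives cancellation. Pairwise coprime degrees within a clique show that $G\circ P=G'\circ Q$ with $G\neq G'$ puts $P$ or $Q$ in $X_i$. Your Möbius-conjugated power maps would need algebraic input of this kind to make the ping-pong go through.
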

\begin{remark} The main difficulty in proving Theorem \ref{thm:classification+intro} is the construction of arbitrary  free products of free commutative semigroups; see Proposition \ref{prop:Free+products+free+abelian} below. In particular, to do this we combine a ping-pong lemma for subsemigroups (see Lemma \ref{lem:ping+pong}) with Mason's abc-theorem over polynomial rings.     
\end{remark}
Combining the two theorems above, we have the following consequence, which implies that the height growth rates in Theorem \ref{thm:main} apply to nearly all partially commutative semigroups acting on the projective line.   
\begin{cor}\label{thm:ht+bounds+non-commutative} Let $K$ be a field of characteristic zero and let $\mathcal{S}\subset K(x)$ be a finitely generated partially commutative semigroup of non-linear rational functions under composition. If $\mathcal{S}$ is not commutative and $\deg(\mathcal{S})$ is not contained in a cyclic semigroup, then the degree and height asymptotics in Theorem \ref{thm:main} hold for $\mathcal{S}$.  
\end{cor}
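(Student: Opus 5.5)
The corollary should follow by combining Theorem \ref{thm:classification+intro} with Theorem \ref{thm:main}; the only real task is to check the graph hypothesis of Theorem \ref{thm:main}. The natural multiplicative function is $d(F)=\deg(F)$. Non-linear rational functions have degree at least $2$, and degree is multiplicative under composition, so $d:\mathcal{S}\to\mathbb{Z}_{\geq2}$ is multiplicative. The assumption that $\deg(\mathcal{S})$ is not contained in a cyclic semigroup is exactly the second hypothesis of Theorem \ref{thm:main}.

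It remains to show that the complement graph $\overline{\mathcal{G}}$ is connected. Since $\mathcal{S}$ embeds in $K(x)$ as a semigroup of non-linear rational functions, Theorem \ref{thm:classification+intro} says that $\mathcal{G}=\mathcal{G}(\mathcal{S})$ is a disjoint union of complete graphs $C_1,\dots,C_m$. Because $\mathcal{S}$ is not commutative, some pair of generators fails to commute, so $m\geq 2$. In $\overline{\mathcal{G}}$, every vertex of $C_i$ is joined to every vertex of $C_j$ whenever $i\neq j$. Hence $\overline{\mathcal{G}}$ is complete multipartite with at least two nonempty parts, and such a graph is connected. Concretely, two vertices in different parts are adjacent, and two vertices $u,v$ in the same part $C_i$ are joined by a path $u,w,v$ through any $w\in C_j$ with $j\neq i$.

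With all hypotheses checked, Theorem \ref{thm:main} gives the degree asymptotic \eqref{d-count} via case (1). For the height asymptotic \eqref{h-count}, we first view $\mathcal{S}$ as acting on $V=\mathbb{P}^1$. As noted after the definition of polarization, every semigroup of non-linear endomorphisms of projective space is polarized, here by $\mathcal{O}(1)$ with $d(F)=\deg F$. So case (2) agrees with case (1), and the exponent $\rho$ is the same.

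The height statement also needs $\mathcal{S}$ and $\mathcal{L}$ to be defined over a number field. The main, though minor, obstacle is that $K$ is an arbitrary field of characteristic zero. I would handle this by reading the height assertion as applying when $K$ is a number field, or when the finitely many generators have coefficients in a number field; this is the only setting in which $h_{\mathcal{L}}$ makes sense in Theorem \ref{thm:main}. Under that reading, \eqref{h-count} holds for every non-preperiodic $P\in\mathbb{P}^1(\overline{K})$.
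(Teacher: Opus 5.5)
Your proposal is correct and matches the paper's argument. Theorem \ref{thm:classification+intro} gives $\mathcal{G}(\mathcal{S})=K_{m_1}\sqcup\dots\sqcup K_{m_r}$, non-commutativity forces $r\geq 2$, and the complement $E_{m_1}\vee\dots\vee E_{m_r}$ (your complete multipartite graph) is connected, so Theorem \ref{thm:main} applies; your remarks on polarization by $\mathcal{O}(1)$ and on needing a number field for the height count are reasonable details that the paper leaves implicit.
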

\begin{proof} Let $\mathcal{S}$ be as in the statement. Then $\mathcal{G}(\mathcal{S})=K_{m_1}\sqcup\dots\sqcup K_{m_r}$ for some $r,m_1\dots,m_r\geq 1$ by Theorem \ref{thm:classification+intro}. In particular, since taking complements of graphs exchanges disjoint unions with joins, we see that
\[\overline{\mathcal{G}(\mathcal{S})}=\overline{K_{m_1}\sqcup\dots\sqcup K_{m_r}}=\overline{K_{m_1}}\vee\dots\vee \overline{K_{m_r}}=E_{m_1}\vee\dots\vee E_{m_r}.\]
Thus, $\overline{\mathcal{G}(\mathcal{S})}$ is connected unless $r=1$. But this means $\mathcal{G}(\mathcal{S})=K_{m_1}$ and  $\mathcal{S}$ is commutative. Therefore, we deduce that if $\mathcal{S}\subset K(x)$ is partially commutative but not commutative, $\mathcal{S}$ consists of non-linear maps, and $\deg(\mathcal{S})$ is not contained in a cyclic semigroup, then Theorem \ref{thm:main} holds for $\mathcal{S}$.      
\end{proof}
Finally, we illustrate that in higher dimensions, there are many more examples of polarized partially commutative semigroups of endomorphisms than just free products of free commutative semigroups. In particular, we highlight the following example using products of elliptic curves.  
\begin{prop}\label{prop:En+vee+Km} Let $K$ be a number field, let $E$ be an elliptic curve defined over $K$, and let $V=E\times E\times E$. Fix any integers $n,m\geq1$. Then there exists a polarizable semigroup $\mathcal{S}$ of endomorphisms of $V$ such that $\mathcal{S}$ is partially commutative and $\mathcal{G}(S)=E_n\vee K_m$. Hence, $S$ is a direct product of a free semigroup of rank $n$ with a free commutative semigroup of rank $m$.  
\end{prop}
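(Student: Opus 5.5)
We will realize $\mathcal{S}$ inside $M_3(\mathbb{Z})\subseteq \operatorname{End}(E^3)$, letting an integer matrix $A=(a_{ij})$ act by $(P_1,P_2,P_3)\mapsto\bigl(\sum_j[a_{1j}]P_j,\sum_j[a_{2j}]P_j,\sum_j[a_{3j}]P_j\bigr)$. Since $\mathbb{Z}\hookrightarrow\operatorname{End}(E)$ is injective for every $E$, so is $M_3(\mathbb{Z})\hookrightarrow\operatorname{End}(V)$, and composition corresponds to matrix multiplication. Hence every relation among the maps is a relation among integer matrices, and all the verifications reduce to matrix algebra.

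The $m$ commuting generators will be $g_j=[\ell_j]$ (scalar multiplication), where $\ell_1,\dots,\ell_m$ are distinct odd primes. These commute with every endomorphism. For the $n$ non-commuting generators we use quaternions. Fix a prime $p\equiv1\bmod 4$ different from all $\ell_j$. Choose Lipschitz (or Hurwitz) quaternions $q_1,\dots,q_n$ of norm $p$ that are pairwise non-associate and such that no $q_j$ is associate to $\overline{q_i}$; there are $8(p+1)$ elements of norm $p$, so we can take $p$ large in terms of $n$. Let $A_i\in M_3(\mathbb{Z})$ be the matrix of $x\mapsto q_i x\overline{q_i}$ on the pure quaternions. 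Then $A_i^{T}A_i=p^2 I$, and we set $f_i=A_i$.

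\emph{Polarization.} Let $L=\bigotimes_k \mathrm{pr}_k^*\mathcal{O}_E(2O)$, which is symmetric and ample. Via $\operatorname{NS}(E^3)\supseteq$ symmetric integer forms, the class of the pullback of $L$ by a matrix $B$ is given by the form $B^TB$. For every $F=B$ in $\mathcal{S}$ we have $B^TB=d(F) I$, so $F^*L$ and $L^{\otimes d(F)}$ are algebraically equivalent, and both are symmetric. They therefore differ by a $2$-torsion element of $\operatorname{Pic}^0$. We remove this ambiguity by working with a totally symmetric sheaf, for example replacing $L$ by $L^{\otimes 2}$ or using the standard fact that the pullback of a totally symmetric sheaf by a homomorphism in this setting is determined by its Néron--Severi class. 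This gives $F^*L\cong L^{\otimes d(F)}$ with $d(f_i)=p^2$ and $d(g_j)=\ell_j^2$.

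\emph{Partial commutativity.} The main obstacle is showing that the only relations are the obvious ones, namely that $\langle f_i\rangle\times\langle g_j\rangle\to M_3(\mathbb{Z})$ is injective from $F_n\times\mathbb{N}^m$. A general element has the form $M=\bigl(\prod_j\ell_j^{e_j}\bigr)R(q_w)$, where $w=i_1\cdots i_k$ is a word and $q_w=q_{i_1}\cdots q_{i_k}$. The map $q\mapsto R(q)$ recovers $q$ up to sign. The assumption that no $q_j$ is associate to $\overline{q_i}$ guarantees that $q_w$ is primitive, so by unique factorization in the Hurwitz order (a primitive quaternion of norm $p^k$ factors into norm-$p$ primes uniquely up to unit migration) the word $w$ is recovered from $q_w$. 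The pairwise non-associativity of the $q_i$ ensures that unit migration does not change the word; this is the point needing the most care, particularly with Lipschitz versus Hurwitz units.

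The exponents $e_j$ are then determined as follows. Since $\det M=(\prod\ell_j^{e_j})^3p^{3k}$, the length $k$ and the $\ell$-part are read off from the determinant. The content of $R(q_w)$ is prime to each odd $\ell_j$ because $q_w$ is primitive, so the $\ell$-part of the content of $M$ is exactly $\prod\ell_j^{e_j}$. Hence $\mathcal{S}\cong F_n\times\mathbb{N}^m$ with $\mathcal{G}(\mathcal{S})=E_n\vee K_m$, as claimed in Proposition \ref{prop:En+vee+Km}.
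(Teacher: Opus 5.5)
Your proof is correct, but both of its substantive steps take a different route from the paper.

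\textbf{Freeness.} The paper starts from the Pythagorean rotations $U_1,U_2\in\mathrm{SO}_3(\mathbb{Q})$ and imports their freeness from \'Swierczkowski's theorem (Theorem~\ref{thm:free+rotations}). It then clears denominators by the scalars $c$ and $ck$, whose multiplicative independence is needed for Lemma~\ref{lem:det+trick}. Finally it passes from rank $2$ to rank $n$ via $f_i=\alpha\beta^i$ (Lemma~\ref{lem:free+trick}). You get rank $n$ in one step from the arithmetic of the Hurwitz order $\mathcal{H}$: freeness becomes unique factorization of primitive quaternions of norm $p^k$ up to unit migration. The price is that ``associate'' must be read as \emph{right}-associate under all $24$ Hurwitz units. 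Your generators should then be chosen from distinct classes among the $p+1$ right classes of norm-$p$ elements, one from each of $n$ conjugate pairs, which is where $p$ large enters. Primitivity of $q_w$ from the consecutive-pair condition is a short consequence of the same uniqueness. If $\pi_1\cdots\pi_{k+1}=p\gamma$ with $\pi_1\cdots\pi_k$ primitive, right-multiplying by $\bar\pi_{k+1}$ gives $\pi_1\cdots\pi_k=\gamma\bar\pi_{k+1}$. This forces $\pi_{k+1}\in\bar\pi_k\mathcal{H}^\times$.

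\textbf{Polarization.} The paper gets the linear equivalence $f^*D\sim dD$ directly from the bilinearity supplied by the theorem of the cube (Lemma~\ref{lem:polarized+split+maps+abelian+varieties}). You compute only the N\'eron--Severi class, using $B^{T}B=dI$, and then remove the remaining symmetric, hence $2$-torsion, discrepancy in $\mathrm{Pic}^0$ by squaring. No replacement of $L$ is actually needed. Your $L$ is exactly the paper's $D$, and it is already the square of the symmetric sheaf $\bigotimes_k\mathrm{pr}_k^*\mathcal{O}_E(O)$, so the discrepancy squares to zero automatically.

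\textbf{Direct product.} This step is essentially the paper's determinant and unique-factorization argument. Your content argument is superfluous: $\det M=\bigl(\prod_j\ell_j^{e_j}\bigr)^3p^{3k}$ already determines $k$ and every $e_j$.

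\textbf{What each route buys.} Yours gives a conceptual, self-contained reason for freeness that works uniformly in $n$, with all $f_i$ of the same degree $p^2$. The paper's gives small explicit matrices and a fully elementary polarization computation that avoids the N\'eron--Severi/Rosati formalism.
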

\begin{remark} We note that if $\mathcal{S}$ is as in Proposition \ref{prop:En+vee+Km}, then the complement graph $\overline{\mathcal{G}(\mathcal{S})}$ is not connected; hence, the conclusions of Theorem \ref{thm:main} do not follow directly. Nevertheless, numerical evidence suggests that the degree and height asymptotics in Theorem \ref{thm:main} still hold for these semigroups. In particular, it is possible that the condition that $\overline{\mathcal{G}(\mathcal{S})}$ be connected can be weakened. On the other hand, there is certainly some condition on the commutation graph $\mathcal{G}(\mathcal{S})$ or its complement that is necessary. For example, if $\mathcal{G}(\mathcal{S})=K_m$ is a complete graph (so that $\overline{\mathcal{G}(\mathcal{S})}=E_m$), then the degree and height growth rates in $\mathcal{S}$ are $o(x^\rho)$ for all $\rho>0$. In particular, the problem of finding necessary and sufficient criteria for obtaining the asymptotics in Theorem \ref{thm:main} is an interesting one, and we plan to return to this in the future.             
\end{remark}
\begin{remark} On the other hand, by combining the constructions in Proposition \ref{prop:Free+products+free+abelian} and Propositions \ref{prop:abelian+examples} below, one may obtain semigroups of split maps of $V\times\mathbb{P}^1$ that satisfy all of the hypotheses of Theorem \ref{thm:main}. In particular, the height asymptotics hold for these examples. Moreover, these examples are not free products of free commutative semigroups, and so in higher dimensions, Theorem \ref{thm:main} applies to more semigroups than in dimension one.   
\end{remark}
An outline of our paper is as follows. In Section \ref{sec:semigroups}, we set some basic notation and sketch the usefulness of (suffix-closed) normal forms when comparing degrees to heights. In Section \ref{sec:d+to+h}, we prove a general result connecting the growth rates of two abstract functions, one multiplicative and one simply nonnegative, subject to a Tate telescoping hypothesis and a suffix-closed normal form. In Section \ref{sec:counting+walks}, we count walks on a directed graph of bounded multiplicative weight and given terminal segment, which will be used to count functions of bounded degree in a given suffix class; see hypothesis (3) of Theorem \ref{thm:d+to+h}. Next, we outline the basic theory of partially commutative semigroups and prove our main result, Theorem \ref{thm:main}, in Section \ref{sec:trace}. Here, the Cartier-Foata normal form \cite{MR239978} provides the bridge to the results in both Section \ref{sec:d+to+h} and Section \ref{sec:counting+walks}; see Proposition \ref{prop:CF+implies+suffix+closed+directed+graph+walks}. Finally, in Section \ref{sec:representations}, we prove the classification result for partially commutative semigroups in dimension one, as well as construct more examples in higher dimensions. \\[5pt] 
\indent \textbf{Acknowledgments:} We thank the American Institute of Mathematics for hosting the workshop, the Dynamics of Multiple Maps, in November 2025 which inspired some of the ideas in this paper. Moreover, we used AI to search and synthesize the literature on partially commutative semigroups and on counting weighted walks on a directed graph, and this information was incorporated in Sections \ref{sec:counting+walks} and \ref{sec:trace}.         

\section{Semigroups with Normal Forms}\label{sec:semigroups} 
\subsection{Preliminaries}
We begin with a discussion of some basic ideas and notation in the theory of semigroups; see \cite{HowieIntro, howie_fundamentals} for more complete introductions. For elements $f_1,\dots,f_n$ of a semigroup $\mathcal{S}$, we write $\langle f_1,\dots,f_n\rangle$ for the subsemigroup they generate, i.e., the set of all products $f_{i_1}\cdots f_{i_k}$ with $k\geq 1$. In the case when $\mathcal{S}$ is a set of self-maps of a variety $V$, the operation is composition. Also, in accordance with the literature, we write $\mathcal{S}^1=\mathcal{S}\sqcup\{1\}$ for the monoid
obtained by adjoining a new element $1$ to $\mathcal{S}$, with the multiplication of $\mathcal{S}$ extended by $1s=s1=s$ for all $s\in \mathcal{S}^1$. 
\begin{remark}
We adjoin a new identity even if $\mathcal{S}$ already has one; a common alternative convention sets $\mathcal{S}^1=\mathcal{S}$ in that case, but the descriptions of free and direct products below require the new identity. This makes no difference for polarized semigroups, which contain no idempotents: if $e^2=e$, then $d(e)^2=d(e)$, contradicting $d(e)\geq 2$. In particular, if $\mathcal{S}$ is a semigroup of self-maps of $V$ not containing the identity map $\mathrm{id}_V$ (for instance, a polarized
one), then $\mathcal{S}\cup\{\mathrm{id}_V\}$ is a copy of $\mathcal{S}^1$, and we take
$1=\mathrm{id}_V$.
\end{remark}

Let $\mathcal{S}_1,\dots,\mathcal{S}_n$ be semigroups (replaced by disjoint copies if necessary). Then the \emph{free product} $\mathcal{S}_1*\cdots*\mathcal{S}_n$ is the coproduct in the category of semigroups (see, e.g., \cite[Ch.~VII]{HowieIntro}); equivalently, it is the monoid free product of $\mathcal{S}_1^1,\dots,\mathcal{S}_n^1$ with its identity removed. Concretely, its elements are the nonempty finite words $s_1\cdots s_k$ with letters in $\mathcal{S}_1\cup\cdots\cup \mathcal{S}_n$ and consecutive letters in different factors. Words are multiplied by concatenation, except that when the last letter (from left to right) of the first word and the first letter of the second word lie in the same $\mathcal{S}_i$, these two letters are replaced by their product in~$\mathcal{S}_i$. Similarly, we define the \emph{direct product}, 
\begin{equation}\label{eq:dir+product}
  \mathcal{S}_1\times\cdots\times \mathcal{S}_n
  =\bigl(\mathcal{S}_1^1\times\cdots\times \mathcal{S}_n^1\bigr)\setminus\{(1,\dots,1)\}
\end{equation}
with componentwise multiplication, i.e., the monoid direct product of $\mathcal{S}_1^1,\dots,\mathcal{S}_n^1$ with its identity removed; note that this is not the Cartesian product of the $\mathcal{S}_i$ --- with the definition of the direct product above, each $\mathcal{S}_i$ embeds in $\mathcal{S}_1\times\cdots\times \mathcal{S}_n$ as the tuples whose other coordinates are all equal to $1$. In particular, the embedded copies of the $\mathcal{S}_i$ commute with each other and generate the direct product. Equivalently, $\mathcal{S}_1\times\cdots\times \mathcal{S}_n$ is the quotient of the free product $\mathcal{S}_1*\cdots*\mathcal{S}_n$ by the relations $st=ts$ for $s\in \mathcal{S}_i$, $t\in \mathcal{S}_j$, $i\neq j$. 

Now let $\mathcal{S}_1,\dots,\mathcal{S}_n$ be subsemigroups of a semigroup $\mathcal{S}$. Then the inclusions $\mathcal{S}_i\subseteq\mathcal{S}$ induce a surjective homomorphism $\mathcal{S}_1*\cdots*\mathcal{S}_n\to\langle \mathcal{S}_1\cup\cdots\cup \mathcal{S}_n\rangle$.
If moreover $st=ts$ whenever $s\in \mathcal{S}_i$, $t\in \mathcal{S}_j$ and $i\neq j$, then $(s_1,\dots,s_n)\mapsto s_1\cdots s_n$, where coordinates equal to $1$ are omitted, defines a surjective homomorphism
$\mathcal{S}_1\times\cdots\times \mathcal{S}_n\to\langle \mathcal{S}_1\cup\cdots\cup \mathcal{S}_n\rangle$. It is an isomorphism exactly when each element of $\langle \mathcal{S}_1\cup\cdots\cup \mathcal{S}_n\rangle$
can be written in only one way as such a product. Next, for convenience, we say that $\mathcal{S}=\langle f_1,\dots,f_n\rangle$ is \emph{free} if the homomorphism $\langle f_1\rangle*\cdots*\langle f_n\rangle\to \mathcal{S}$ is an isomorphism. Thus, freeness is a property of the given generators $f_1,\dots,f_n$, not merely of the
abstract semigroup $\mathcal{S}$. When $\mathcal{S}$ is polarized, each $\langle f_i\rangle$ is isomorphic to $\mathbb{Z}_{\geq1}$ with addition. Hence $\mathcal{S}$ is free exactly when distinct words in $f_1,\dots,f_n$ define distinct maps,
that is, when $f_1,\dots,f_n$ is a free basis of $\mathcal{S}$.

Next, we record notation for two families of semigroups that will serve as building blocks for examples to come. Fix $n\geq1$ and distinct symbols $x_1,\dots,x_n$. Then the \emph{free semigroup of rank $n$} is
\[
  \mathcal{F}(n)=\langle x_1\rangle*\cdots*\langle x_n\rangle,
\]
the free product of $n$ copies of $\mathbb{Z}_{\geq1}$ with addition; concretely, its elements are the nonempty finite words in $x_1,\dots,x_n$ under concatenation. The \emph{free
commutative semigroup of rank $n$} is
\[
  \mathcal{A}(n)=\langle x_1\rangle\times\cdots\times\langle x_n\rangle,
\]
the direct product \eqref{eq:dir+product} of $n$ copies of $\mathbb{Z}_{\geq1}$; concretely, recording an element $x_1^{a_1}\cdots x_n^{a_n}$ by its exponent vector identifies
\[
  \mathcal{A}(n)=\mathbb{Z}_{\geq0}^{\,n}\setminus\{(0,\dots,0)\}
\]
with addition. Note that $\mathcal{F}(1)=\mathcal{A}(1)=\mathbb{Z}_{\geq1}$, and that in both cases the indecomposable elements are exactly $x_1,\dots,x_n$, so the rank $n$ is determined by the isomorphism class.

Finally, we note that these semigroups are characterized by the expected universal properties: given any semigroup $\mathcal{S}$ and any $f_1,\dots,f_n\in\mathcal{S}$, there is a unique homomorphism $\mathcal{F}(n)\to\mathcal{S}$ with $x_i\mapsto f_i$, and if the $f_i$ commute pairwise there is a unique homomorphism $\mathcal{A}(n)\to\mathcal{S}$ with $x_i\mapsto f_i$. In particular, if $\mathcal{S}=\langle f_1,\dots,f_n\rangle$ is polarized, then $\mathcal{S}$ is free in the sense above exactly when $x_i\mapsto f_i$ defines an isomorphism 
$\mathcal{F}(n)\xrightarrow{\ \sim\ }\mathcal{S}$. We say $\mathcal{S}$ is \emph{free commutative} on $f_1,\dots,f_n$ when the $f_i$ commute pairwise and $x_i\mapsto f_i$ defines an isomorphism $\mathcal{A}(n)\xrightarrow{\ \sim\ }\mathcal{S}$.
\subsection{Strategy Sketch}\label{subsec:sketch} 
In general, the strategy in this paper is to use the degree growth rate function on a semigroup $\mathcal{S}$ of finite self maps, 
\begin{equation}\label{eq:degree+growth}
\#\{F\in \mathcal{S}\, :\,\deg(F)\leq x\},
\end{equation}
to control the height growth rate function on an orbit,
\begin{equation}\label{eq:height+growth}
\#\{F\in \mathcal{S}\,:\, h(F(P))\leq x\}.
\end{equation}
This approach is in keeping with a general heuristic in arithmetic dynamics and made precise in the case of iterating a single function by the Kawaguchi-Silverman conjecture \cite{KS}; see also \cite{KSConj1,KSConj2,KSConj3,KSConj4,KSConj5} for established cases of this conjecture. Furthermore, this strategy is reasonable for two practical reasons: first, heights tend to grow by a factor of the degree,
\begin{equation}\label{ht+degree+functoriality}
h(F(P))=\deg(F)h(P)+O(1),
\end{equation} 
where $O(1)$ depends on $F$ but not on $P$, and second, the degree map is multiplicative, so that \eqref{eq:degree+growth} is likely easier to understand than \eqref{eq:height+growth}. To make this idea a bit more precise, let $\psi(F):=h(F(P))/\deg(F)$ denote the ratio of the height $h$ evaluated at $F(P)$ and $\deg(F)$, a truncated canonical height function; see \cite[\S3.4]{SilvDyn}. Then, similar to \cite[Lemma 3.2]{MR4780496}, it follows from Tate's telescoping argument and \eqref{ht+degree+functoriality}, that 
\[|\psi(F_1)-\psi(F_2)|\leq \frac{C(\mathcal{S})}{\deg(G)}\]
whenever $F_1,F_2\in \mathcal{S}_G:=\{F'\circ G: F'\in\mathcal{S}^1\}$ are in the same right coset for some $G\in \mathcal{S}$; here, $C(\mathcal{S})$ is a constant depending on $\mathcal{S}$. In particular, by assuming that $\deg(G)$ is sufficiently large, we deduce that the ratio function \textbf{\emph{$\psi$ is approximately constant on cosets}} and write $\psi(F)\approx\psi(G)$ for all $F\in\mathcal{S}_G$ to convey this is in our sketch. Hence, if we can find a suitable finite transversal $\mathcal{T}\subset\mathcal{S}$ such that $\mathcal{S}=\bigsqcup_{G\in\mathcal{T}}\mathcal{S}_G$ is a disjoint union of cosets (or nearly so), then 
\begin{equation}\label{eq:sketch}
\begin{split} 
\#\{F\in \mathcal{S}\,:\, h(F(P))\leq x\}&=\sum_{G\in\mathcal{T}}\#\{F\in\mathcal{S}_G\,:\, h(F(P))\leq x\}\\[5pt] 
&\approx \sum_{G\in\mathcal{T}}\#\Big\{F\in\mathcal{S}_G\,:\, \deg(F)\leq \frac{x}{\psi(G)}\Big\}.
\end{split} 
\end{equation}
Therefore, if we can find an asymptotic (with continuous main term) for the growth rate of the multiplicative function $d(F)=\deg(F)$ on all cosets $\mathcal{S}_G\subset \mathcal{S}$, not just on the entire semigroup $\mathcal{S}$, then we can obtain an asymptotic for the growth rate of $h(F(P))$ on all of $\mathcal{S}$ as desired. 

In fact, for free semigroups this sketch can be made precise using ideas in \cite[Lemma 2.12]{MR4988033}: in this case, right cosets defined by elements $G\in\mathcal{S}$ of sufficiently large length gives the desired decomposition 
$\mathcal{S}=\bigsqcup_{G\in\mathcal{T}}\mathcal{S}_G$. 
However, for non-free semigroups such a coset decomposition is not guaranteed. Nevertheless, it is possible to adapt this argument for semigroups equipped with a sufficiently nice normal form (i.e., unique way to represent elements of $\mathcal{S}$ by words), where suffixes (e.g., $G$ in factorization $F=F'\circ G\in\mathcal{S}_G$) are well defined.         
\subsection{Normal Forms} 
With the strategy above in mind, we fix some notation. Let $\mathcal{S}$ be an abstract semigroup generated by a finite set $\Gamma$, let $\Gamma^*$ denote the free monoid (including the empty word $\epsilon$) generated by $\Gamma$, and let $\Gamma^+=\Gamma\setminus\epsilon$. Given a word $w\in \Gamma^*$, let $|w|$ denote the length. Let $\mathcal{S}^1=\mathcal{S}\cup\{1\}$ where $1$ is an identity element and let $\pi:\Gamma^*\rightarrow \mathcal{S}^1$ be the canonical surjective monoid homomorphism with $\pi(\epsilon)=1$. Moreover, we write $\overline{w}=\pi(w)$ for $w\in\Gamma^*$ to ease notation. 

We now discuss suffixes. Given $v,w\in\Gamma^*$, we say that $v$ is a \emph{suffix} of $w$, denoted by $v\leq w$,  if $w=uv$ for some $u\in \Gamma^*$. Moreover, we say that $v$ is a proper suffix if $u\neq \epsilon$. In particular, it is clear that $\leq$ is a partial order on $\Gamma^*$, and $\leq$ is both transitive and reflexive. Similarly, we define $\textup{Suff}(w)=\{v\in\Gamma^*: v\leq w\}$ to be the set of suffixes of $w$, and note that $\textup{Suff}(w)$ is a finite set with $|w|+1$ elements. With these basic notions in place, we define a new ingredient in our study of degrees and heights, suffix-closed normal forms.
\begin{defin}\label{def:cross-section} A subset $\mathcal{N}\subseteq\Gamma^*$ is called a set of \emph{normal forms} (or transversal, cross-section) of $\mathcal{S}$ if the restriction map $\pi|_{\mathcal{N}}:\mathcal{N}\rightarrow \mathcal{S}^1$ is a bijection. Moreover, we write $\textup{nf}: \mathcal{S}^1\rightarrow\mathcal{N}$ for the inverse map of $\pi$ when restricted to $\mathcal{N}$.   
\end{defin}
\begin{defin}\label{def:suffix-closed} A subset $\mathcal{N}\subseteq\Gamma^*$ is called \emph{suffix-closed} if $w\in\mathcal N$ and $v\leq w$, then $v\in\mathcal{N}$. In particular, if $\mathcal{N}$ is a set of normal forms that is also suffix-closed, then we call $\mathcal{N}$ a \emph{suffix-closed normal form} on $\mathcal{S}$.      
\end{defin}
\begin{remark}\label{rmk:dependence+generating+set} We note that a suffix-closed normal form $\mathcal{N}$ depends on the generating set $\Gamma$ of $\mathcal{S}$. However, we do not include this dependence in our notation.     
\end{remark}
\begin{remark} The Cartier-Foata normal form in a partially commutative semigroup gives a suffix-closed normal form, when we view the set of non-empty cliques as a generating set $\Gamma$ (instead of the original generating letters); see Proposition \ref{prop:CF+implies+suffix+closed+directed+graph+walks} below.    
\end{remark}
However, in order to use $\mathcal{N}$ to count degrees and heights, we need to stipulate further properties on $\mathcal{N}$. In particular, with the proofs in \cite[\S2.2]{MR4988033} in mind, we use $\mathcal{N}$ to define suffix elements in $\mathcal{S}$ (not just in $\Gamma^*$) and suffix deletion in $\mathcal{S}$. We make this process precise with the following result. 
\begin{lem}\label{lem:suffix+deletion}
Let $\mathcal{N}$ be a suffix-closed normal form on $\mathcal{S}$ and let $F\in \mathcal{S}^1$ with $|\textup{nf}(F)|=n$. For $0\leq i\leq n$, let $s_i(F)$ be the suffix of $\textup{nf}(F)$ of length $i$. Then: 
\vspace{.1cm} 
\begin{enumerate}
    \item[\textup{(1)}] We have that $s_i(F)\in\mathcal{N}$ for every $1\leq i\leq n$. Moreover, the element $F^{(i)}:=\overline{s_i(F)}\in \mathcal{S}^1$ satisfies $\textup{nf}(F^{(i)})=s_i(F)$.
    \vspace{.15cm} 
    \item[\textup{(2)}] The map $i\rightarrow F^{(i)}$ is injective with $F^{(0)}=1$ and $F^{(n)}=F$. 
    \vspace{.15cm} 
    \item[\textup{(3)}] For all $i\leq j$, there is a $U\in S^1$ with $F^{(j)}=UF^{(i)}$. Specifically, if we write $s_j(F)=us_i(F)$ for some unique $u\in \Gamma^*$, then $U=\overline{u}$. \vspace{.15cm}    
    \item[\textup{(4)}] We have that $\textup{Suff}(\textup{nf}(F^{(j)}))\subseteq \textup{Suff}(\textup{nf}(F))$ and $F^{(i)}=(F^{(j)})^{(i)}$ for all $i\leq j$.   
\end{enumerate}
\end{lem}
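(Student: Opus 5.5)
The argument is purely formal: everything follows from the definitions of normal form, suffix-closedness, and the monoid homomorphism $\pi$. We prove the four parts in order, each using the previous ones.

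For (1), since $\textup{nf}(F)\in\mathcal{N}$ and $s_i(F)\leq \textup{nf}(F)$, suffix-closedness gives $s_i(F)\in\mathcal{N}$; this holds for all $0\leq i\leq n$, including the empty word. Since $\pi|_{\mathcal{N}}$ is a bijection with inverse $\textup{nf}$, and $s_i(F)\in\mathcal{N}$ maps to $F^{(i)}=\overline{s_i(F)}$, we get $\textup{nf}(F^{(i)})=s_i(F)$.

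For (2), the word $s_i(F)$ has length $i$, so the words $s_i(F)$ are pairwise distinct as $i$ varies. Applying the injective map $\pi|_{\mathcal{N}}$, the elements $F^{(i)}$ are pairwise distinct. Moreover $s_0(F)=\epsilon$, so $F^{(0)}=\pi(\epsilon)=1$, and $s_n(F)=\textup{nf}(F)$, so $F^{(n)}=\pi(\textup{nf}(F))=F$.

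For (3), take $i\leq j$. Both $s_i(F)$ and $s_j(F)$ are suffixes of the same word, and the shorter is a suffix of the longer. So $s_j(F)=u\,s_i(F)$, where $u$ is uniquely determined by cancellation in the free monoid $\Gamma^*$. Since $\pi$ is a monoid homomorphism, $F^{(j)}=\overline{u}\,\overline{s_i(F)}=UF^{(i)}$ with $U=\overline{u}$.

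For (4), by (1) we have $\textup{nf}(F^{(j)})=s_j(F)$. Every suffix of $s_j(F)$ is a suffix of $\textup{nf}(F)$ by transitivity of $\leq$, which gives the containment of suffix sets. For $i\leq j$, the suffix of length $i$ of $s_j(F)$ is $s_i(F)$. Hence $(F^{(j)})^{(i)}=\overline{s_i(F)}=F^{(i)}$.

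The one step needing care is in (1): identifying $\textup{nf}(F^{(i)})$ with $s_i(F)$ relies on uniqueness of normal forms, not merely on $\pi(s_i(F))=F^{(i)}$. This is exactly where suffix-closedness is essential, since without it $\overline{s_i(F)}$ could have a different normal form. Parts (2)–(4) then follow because $\textup{nf}$ identifies the suffix elements with the actual word suffixes.
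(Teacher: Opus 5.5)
Your proposal is correct and follows essentially the same route as the paper: suffix-closedness plus uniqueness of normal forms for (1), distinct lengths and injectivity of $\pi$ on $\mathcal{N}$ for (2), $\pi$ being a monoid morphism for (3), and transitivity of $\leq$ together with (1) for (4). Your explicit checks that $\epsilon\in\mathcal{N}$ (so $F^{(0)}=1$) and that $(F^{(j)})^{(i)}$ is computed from $\textup{nf}(F^{(j)})=s_j(F)$ just spell out details the paper leaves implicit.
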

\begin{proof} Statement (1) follows immediately from suffix-closure and Definition \ref{def:cross-section}: $s_i(F)$ is a normal word, and so it is the normal form of the element it represents. For (2), we note that for $0\leq i\neq j\leq n$, we have that $|s_i(F)|=i\neq j=|s_j(F)|$. Hence, $s_i(F)$ and $s_j(F)$ are distinct words in $\mathcal{N}$. But, since $\pi$ is injective on $\mathcal{N}$ (since it is a set of normal forms), we have that $\overline{s_i(F)}\neq \overline{s_j(F)}$. Statement (3) follows from the fact that $\pi$ is a morphism. Finally, statement (4) is the transitivity of $\leq$ together with statement (1).      
\end{proof}
In light Lemma \ref{lem:suffix+deletion}, we can define the deletion of elements of $\mathcal{S}$ --- we call the elements $F^{(i)}$ for $0\leq i\leq |\textup{nf}(F)|$ the \emph{suffixes} of $F$ and $F^{-}:=F^{|\textup{nf}(F)|-1}$, the \emph{first letter deletion} of $F$. We now bring the multiplicative function $d$ into the picture and collect a few facts about $d$ under deletion.
\begin{lem}\label{lem:d+deletion} With the same notation as above, let $d:\mathcal{S}\rightarrow\mathbb{Z}_{\geq2}$ be a multiplicative function and extend $d$ to $\mathcal{S}^1$ by setting $d(1)=1$. Let $F\in \mathcal{S}^1$, let $n:=|\textup{nf}(F)|$, and let $0\leq i\leq j\leq n$. Then $d(F^{(i)})$ divides $d(F^{(j)})$ and     \[1=d(F^{(0)})<d(F^{(1)})<\cdots<d(F^{(n)})=d(F),\qquad d(F^{(i)})\ge 2^{i}.\] 
\end{lem}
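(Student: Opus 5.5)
The argument rests on Lemma \ref{lem:suffix+deletion}(3) together with the multiplicativity of $d$, extended to $\mathcal{S}^1$ by $d(1)=1$.

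\emph{Extension is multiplicative.} For $U,G\in\mathcal{S}^1$ we have $d(UG)=d(U)d(G)$. If both lie in $\mathcal{S}$, this is the hypothesis on $d$. If either equals $1$, it holds trivially, since $d(1)=1$.

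\emph{Divisibility.} Fix $0\leq i\leq j\leq n$. By Lemma \ref{lem:suffix+deletion}(3) there is $U=\overline{u}\in\mathcal{S}^1$ with $F^{(j)}=UF^{(i)}$, where $s_j(F)=us_i(F)$. Hence $d(F^{(j)})=d(U)\,d(F^{(i)})$, so $d(F^{(i)})$ divides $d(F^{(j)})$.

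\emph{Strict increase.} Take $j=i+1$. Then $u$ is a single letter $\gamma\in\Gamma$, so $U=\overline{\gamma}$. I claim $U\neq 1$. If $\overline{\gamma}=1$, then $F^{(i+1)}=F^{(i)}$, which contradicts the injectivity in Lemma \ref{lem:suffix+deletion}(2). So $U\in\mathcal{S}$, and therefore $d(U)\geq 2$. This gives
\[
d(F^{(i+1)})\geq 2\,d(F^{(i)})>d(F^{(i)}).
\]

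\emph{Endpoints and lower bound.} We have $d(F^{(0)})=d(1)=1$ and $d(F^{(n)})=d(F)$ by Lemma \ref{lem:suffix+deletion}(2). Inducting on $i$ with the inequality $d(F^{(i+1)})\geq 2\,d(F^{(i)})$ yields $d(F^{(i)})\geq 2^{i}$.

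The only subtle point is ruling out a generator letter that maps to the identity $1\in\mathcal{S}^1$. If that happened, $U=1$ would lie outside the domain of $d$ and the factor $d(U)\geq2$ would fail. Injectivity of $i\mapsto F^{(i)}$ handles this. Alternatively, $\pi(\Gamma^+)\subseteq\mathcal{S}$ because $1$ is adjoined as a new element.
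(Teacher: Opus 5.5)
Your proof is correct and follows the same route as the paper, which just cites multiplicativity of $d$ together with part (3) of Lemma~\ref{lem:suffix+deletion}. You fill in the details the paper leaves implicit, notably the check that each single-letter factor lies in $\mathcal{S}$ rather than being the adjoined $1$, either via the injectivity in part (2) or because $1$ is a newly adjoined element; this check is what guarantees the factor $d\geq 2$ needed for strict increase and the bound $2^{i}$.
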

\begin{proof} This follows easily from the fact that $d$ is multiplicative and part (3) of Lemma \ref{lem:suffix+deletion}.     
\end{proof}
Next, we define the crossing suffix at a particular level of $d$. Namely, for $L>1$, let $\mathcal{S}_{\geq L}$ be the set of $F\in \mathcal{S}$ with $d(F)\geq L$. Then for $F\in \mathcal{S}_{\geq L}$, we define  
\begin{equation*}\label{eq:sigma}
\sigma_L(F):=F^{(i_L(F))}\qquad\text{for}\;\; i_L(F):=\min\bigl\{0\leq i\leq|\mathrm{nf}(F)| \ :\ d(F^{(i)})\ge L\bigr\}. 
\end{equation*}
 Note that $i_L(F)$ and $\sigma_L(F)$ are well defined by Lemma \ref{lem:d+deletion}. In particular, we may view $\sigma_L$ as a function $\sigma_L:\mathcal{S}_{\geq L}\rightarrow \mathcal{S}$. Also for $L>1$,  define  
\begin{equation}\label{eq:TL}
\mathcal T_L:=\bigl\{G\in \mathcal{S}\ :\ d(G)\ge L>d(G^-)\bigr\},
\end{equation}
which we view as the smallest elements of $\mathcal{S}$ whose $d$-value is at least $L$. We pause now to gather several facts about $\sigma_L$ and $\mathcal{T}_L$. 
\begin{lem}\label{lem:properties+sigma+T}
Let $L>1$ and let $\sigma_L$ and $\mathcal{T}_L$ be as defined above. Then
\begin{enumerate}
\item[\textup{(1)}] $\sigma_L(\mathcal{S}_{\geq L})=\mathcal{T}_L$. \vspace{.1cm}   
\item[\textup{(2)}] $\mathcal{T}_L$ is finite.  
\end{enumerate}
\end{lem}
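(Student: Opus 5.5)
For (1), I will prove the two inclusions $\sigma_L(\mathcal{S}_{\geq L})\subseteq\mathcal{T}_L$ and $\mathcal{T}_L\subseteq\sigma_L(\mathcal{S}_{\geq L})$ separately. Both rest on Lemma \ref{lem:suffix+deletion}(4), which says suffixes of suffixes are suffixes, and on the strict monotonicity in Lemma \ref{lem:d+deletion}.

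\textbf{Step 1: $\sigma_L(\mathcal{S}_{\geq L})\subseteq\mathcal{T}_L$.} Let $F\in\mathcal{S}_{\geq L}$, set $i=i_L(F)$ and $G=\sigma_L(F)=F^{(i)}$.
\begin{itemize}
\item Since $L>1=d(F^{(0)})$, we have $i\geq 1$, so $G\in\mathcal{S}$ and $G\neq 1$.
\item By Lemma \ref{lem:suffix+deletion}(1), $\textup{nf}(G)=s_i(F)$ has length $i$.
\item Hence $G^-=G^{(i-1)}$, which equals $F^{(i-1)}$ by Lemma \ref{lem:suffix+deletion}(4).
\item By minimality of $i$, $d(F^{(i-1)})<L\leq d(F^{(i)})$.
\end{itemize}
This gives $d(G)\geq L>d(G^-)$, so $G\in\mathcal{T}_L$.

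\textbf{Step 2: $\mathcal{T}_L\subseteq\sigma_L(\mathcal{S}_{\geq L})$.} Let $G\in\mathcal{T}_L$ with $n=|\textup{nf}(G)|$.
\begin{itemize}
\item $G\in\mathcal{S}_{\geq L}$ since $d(G)\geq L$.
\item By Lemma \ref{lem:d+deletion}, $d(G^{(j)})\leq d(G^{(n-1)})=d(G^-)<L$ for all $j\leq n-1$.
\item So $i_L(G)=n$ and $\sigma_L(G)=G^{(n)}=G$.
\end{itemize}
Thus $\sigma_L$ fixes every element of $\mathcal{T}_L$, and $\mathcal{T}_L$ lies in its image. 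The only detail to check is $n\geq1$, which holds because $G\in\mathcal{S}$ and $1\notin\mathcal{S}$ (equivalently $d(G)\geq L>1$).

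\textbf{Step 3: finiteness (2).} For $G\in\mathcal{T}_L$ with $|\textup{nf}(G)|=n$:
\begin{itemize}
\item Lemma \ref{lem:d+deletion} gives $2^{n-1}\leq d(G^{(n-1)})=d(G^-)<L$, so $n<1+\log_2 L$.
\item The map $G\mapsto\textup{nf}(G)$ is injective, and it sends $\mathcal{T}_L$ into the set of words in $\Gamma^*$ of length less than $1+\log_2 L$.
\item That set is finite because $\Gamma$ is finite.
\end{itemize}
Hence $\mathcal{T}_L$ is finite.

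The only real subtlety is the identification $G^-=F^{(i-1)}$ in Step 1. It requires recognizing that the suffix structure of $\sigma_L(F)$ is inherited from $F$, which is exactly what Lemma \ref{lem:suffix+deletion}(1) and (4) provide; the rest is bookkeeping.
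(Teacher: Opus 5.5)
Your proof is correct and follows the paper's argument essentially step for step. Both inclusions in (1) come out exactly as in the paper, via Lemma \ref{lem:suffix+deletion}(4) and the monotonicity in Lemma \ref{lem:d+deletion}, with $\sigma_L$ fixing $\mathcal{T}_L$; in (2) your bound $2^{n-1}\leq d(G^-)<L$ is a slightly cleaner variant of the paper's $2^n\leq d(G)\leq L\cdot d_{\max}$, since it needs no $d_{\max}$, but the idea (bound the normal-form length and count words over the finite alphabet $\Gamma$) is the same.
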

\begin{proof} 
Let $F\in \mathcal{S}_{\geq L}$ and write $i=i_L(F)$ and $\sigma_L(F)=F^{(i)}=G$. Then $d(G)\geq L$ by definition of $i_L(F)$. On the other hand, we have that $|\textup{nf}(G)|=|s_i(F)|=i$. Moreover, $i\geq1$ since $L>1$ and $d(F^0)=1$ by Lemma \ref{lem:d+deletion}. Hence, Lemma \ref{lem:suffix+deletion} part (4) implies that \[F^{(i-1)}=(F^{(i)})^{(i-1)}=G^{(i-1)}=G^{|\textup{nf}(G)|-1}=G^{-}.\]
But $d(F^{(i-1)})<L$ by definition of $i_L(F)$. Hence, $d(G^{-})<L$ and $G\in \mathcal{T}_L$. In particular, we have shown that $\sigma_L(\mathcal{S}_{\geq L})\subseteq \mathcal{T}_L$. 

Conversely, suppose that $G\in \mathcal{T}_L$ and $m=|\textup{nf}(G)|\geq1$. Then $d(G^{(m)})=d(G)\geq L$ and $d(G^{(i)})\leq d(G^{(m-1)})=d(G^{-})<L$ for all $i\leq m-1$ by Lemma \ref{lem:d+deletion}. Hence, $\sigma_L(G)=G^{(m)}=G$ by the definition of $\sigma_L$. In particular, we deduce that 
\[\mathcal{T}_L=\sigma_L(\mathcal{T}_L)\subseteq \sigma_L(\mathcal{S}_{\geq L})\subseteq \mathcal{T}_L,\]
from which claim (1) follows.

For claim (2), write $d_{\max}=\max\{d(f):f\in \Gamma\}$. Then it follows form Lemma \ref{lem:d+deletion} that 
\[\mathcal{T}_L\subseteq \{G\in \mathcal{S}: d(G)\leq L\cdot d_{\max}\}.\]
On the other hand, if $m=|\textup{nf}(G)|$, then Lemma \ref{lem:d+deletion} implies that $d(G)\geq 2^m$. Combining this with the subset relation above, we see that $|\textup{nf}(G)|\leq \lceil \log_2(L\cdot d_{\max})\rceil$ for all $G\in\mathcal{T}_L$. But $\textup{nf}(\cdot)$ uniquely identifies elements of $\mathcal{S}$, so there are at most $(\#\Gamma)^{\lceil \log_2(L\cdot d_{\max})\rceil}$ elements of $\mathcal{T}_L$.     
\end{proof}
Finally, for $G\in \mathcal{S}$, we define the suffix class,  
\begin{equation*}
\mathcal{C}_G:=\bigl\{F\in \mathcal{S}\ :\ \mathrm{nf}(G)\leq \mathrm{nf}(F)\bigr\},
\end{equation*}
to be the set of $F\in \mathcal{S}$ whose normal form contains the normal form of $G$ as a suffix. In particular, we have the following decomposition.   
\begin{lem}\label{lem:decomp} Let $L>1$ and $G\in\mathcal{T}_L$. Then $\sigma_L^{-1}(G)=\mathcal{C}_G$. Hence, 
\[\mathcal{S}_{\geq L}=\bigsqcup_{G\in\mathcal{T}_L}\mathcal{C}_G\]
is a disjoint partition of $\mathcal{S}_{\geq L}$ into finitely many non-empty components.  
\end{lem}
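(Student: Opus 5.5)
We prove the equality $\sigma_L^{-1}(G)=\mathcal{C}_G$ by double inclusion. The partition statement then follows formally from Lemma \ref{lem:properties+sigma+T}. Throughout, fix $L>1$ and $G\in\mathcal{T}_L$, and set $m=|\textup{nf}(G)|$. As in the proof of Lemma \ref{lem:properties+sigma+T}, $m\geq1$.

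\emph{Inclusion $\sigma_L^{-1}(G)\subseteq\mathcal{C}_G$.} Let $F\in\mathcal{S}_{\geq L}$ with $\sigma_L(F)=G$, and write $i=i_L(F)$, so that $G=F^{(i)}$. By Lemma \ref{lem:suffix+deletion}(1), $\textup{nf}(G)=\textup{nf}(F^{(i)})=s_i(F)$, which is a suffix of $\textup{nf}(F)$. Hence $\textup{nf}(G)\leq\textup{nf}(F)$ and $F\in\mathcal{C}_G$.

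\emph{Inclusion $\mathcal{C}_G\subseteq\sigma_L^{-1}(G)$.} Let $F\in\mathcal{C}_G$, so that $\textup{nf}(G)\leq\textup{nf}(F)$ with $|\textup{nf}(G)|=m$.
\begin{enumerate}
\item[(a)] Since a suffix of a given length is unique, $s_m(F)=\textup{nf}(G)$. Because $\pi$ is injective on $\mathcal{N}$, this gives $F^{(m)}=G$.
\item[(b)] By Lemma \ref{lem:d+deletion}, $d(F)\geq d(F^{(m)})=d(G)\geq L$, so $F\in\mathcal{S}_{\geq L}$ and $\sigma_L(F)$ is defined.
\item[(c)] By Lemma \ref{lem:suffix+deletion}(4) and (a), $F^{(m-1)}=(F^{(m)})^{(m-1)}=G^{(m-1)}=G^-$. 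Since $G\in\mathcal{T}_L$, we get $d(F^{(m-1)})=d(G^-)<L$.
\item[(d)] By the monotonicity in Lemma \ref{lem:d+deletion}, $d(F^{(i)})<L$ for all $i\leq m-1$, while $d(F^{(m)})\geq L$. Hence $i_L(F)=m$ and $\sigma_L(F)=F^{(m)}=G$.
\end{enumerate}

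\emph{The partition.} Every $F\in\mathcal{S}_{\geq L}$ has $\sigma_L(F)\in\mathcal{T}_L$ by Lemma \ref{lem:properties+sigma+T}(1). Therefore $\mathcal{S}_{\geq L}$ is the disjoint union of the fibers $\sigma_L^{-1}(G)=\mathcal{C}_G$ over $G\in\mathcal{T}_L$. The fibers are disjoint because $\sigma_L$ is a function. Each fiber is nonempty since $G\in\mathcal{C}_G$, and there are finitely many of them by Lemma \ref{lem:properties+sigma+T}(2).

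The only point requiring care is step (a) of the second inclusion: identifying $F^{(m)}$ with $G$ needs both that suffixes of a fixed length are unique and that $\pi$ is injective on normal forms. The remaining steps are bookkeeping with Lemmas \ref{lem:suffix+deletion} and \ref{lem:d+deletion}.
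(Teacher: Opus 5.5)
Your proof is correct and takes essentially the same route as the paper's: a double inclusion using Lemma \ref{lem:suffix+deletion}(1),(4) and the monotonicity in Lemma \ref{lem:d+deletion}, with the partition read off from the fibers of $\sigma_L$; you also make explicit the check $d(F)\geq L$, which the paper leaves implicit. One minor remark: in step (a), $F^{(m)}=\overline{s_m(F)}=\overline{\textup{nf}(G)}=G$ holds simply because $\textup{nf}$ is a section of $\pi$, so injectivity of $\pi$ on $\mathcal{N}$ is not actually needed there.
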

\begin{proof} 
If $F\in \mathcal{S}_{\geq L}$ is such that $\sigma_L(F)=G$, then $\textup{nf}(G)=\textup{nf}(F^{i_L(F)})=s_{i_L}(F)$ by Lemma \ref{lem:suffix+deletion} part (1), so that the normal form of $G$ is a suffix of the normal form of $F$. Hence, $\sigma_L^{-1}(G)\subseteq \mathcal{C}_G$. 

Conversely, suppose that $F\in \mathcal{C}_G$ and write $m=|\textup{nf}(G)|$. Then since $\textup{nf}(G)\leq \textup{nf}(F)$ and suffixes of a \emph{word} are uniquely determined by their length, we must have that $\textup{nf}(G)=s_m(F)$ so that $G=F^{(m)}$. But then $d(F^{(m)})=d(G)\geq L$ by assumption on $G$, so that $m\geq i_{L}(F)$ by definition of $i_L(F)$. On the other hand, for $0\leq i\leq m-1$ we have that $F^{(i)}=(F^{(m)})^{(i)}=G^{(i)}$ by Lemma \ref{lem:suffix+deletion} part (4). Hence, 
\[d(F^{(i)})=d(G^{(i)})\leq d(G^{(m-1)})=d(G^{-})<L\]
by Lemma \ref{lem:d+deletion} and our assumption that $G\in\mathcal{T}_L$. Therefore, $m=i_L(F)$, $G=\sigma_L(F)$, and $\mathcal{C}_G\subseteq \sigma_L^{-1}(G)$ as claimed.

Finally, we explain the decomposition of $\mathcal{S}_{\geq L}$ given in Lemma \ref{lem:decomp}. Since $\sigma_L:\mathcal{S}_{\geq L}\rightarrow \mathcal{T}_L$ is a well-defined function with image $\mathcal{T}_L$ by Lemma \ref{lem:properties+sigma+T}, its fibers are pairwise disjoint and cover $\mathcal{S}_{\geq L}$. Moreover, the fibers are exactly $\mathcal{C}_G$ for $G\in\mathcal{T}_L$ by the first claim of this Lemma. Finally, $\mathcal{T}_L$ is finite by Lemma \ref{lem:properties+sigma+T}, and $\mathcal{C}_G$ is non-empty since $\textup{nf}(G)\leq \textup{nf}(G)$.      
\end{proof}
\section{From Degrees to Heights}\label{sec:d+to+h} 
In this section, we use normal forms and sufficient information about the growth rate of a multiplicaitve function $d$ (e.g., the degree of a finite map) to deduce a growth rate of a related nonnegative function $h$. In particular, we make the argument outlined in subsection \ref{subsec:sketch} precise. Specifically, we use a suffix-closed normal-form to give a (type of) right coset decomposition of $\mathcal{S}$ into finitely many suffix classes $\mathcal{C}_{G}$; so we use $\mathcal{C}_G$ in place of $\mathcal{S}_G$ in the sketch in subsection \ref{subsec:sketch}. Moreover, we then assume that \emph{$d$ grows regularly on suffix classes}, i.e., the shape of the asymptotic count of 
\begin{equation}\label{eq:class+count+regularity}
\#\{F\in\mathcal{C}_G: d(F)\leq x\}
\end{equation}
does not depend on $G\in\mathcal{S}$. Finally, we tie the degree counts on \eqref{eq:class+count+regularity} for $G\in\mathcal{S}$ and the height counts $\#\{F\in\mathcal{C}_G: h(F)\leq x\}$ together using a Tate telescoping argument.           
\subsection{Relating the Growth of a Pair of Functions}
We are ready to prove the main theorem of this section, which relates the growth rate of a multiplicative function $d$ on a semigroup to the growth rate of a suitable nonnegative companion function $h$. The reader is encouraged to have the examples $d(F):=\deg(F)$ and $h(F):=h_\mathcal{L}(F(P))$ in mind. 
\begin{thm}\label{thm:d+to+h} Let $(\mathcal{S},\Gamma,\mathcal{N})$ be a finitely generated semigroup with a suffix-closed normal form, and suppose that $\mathcal{S}$ is equipped with two functions, $d:\mathcal{S}\rightarrow\mathbb{Z}_{\geq2}$ and $h: \mathcal{S}\rightarrow \mathbb{R}_{\geq0}$, with $d$ multiplicative. Furthermore, assume the following properties hold:
\begin{enumerate}
    \item[\textup{(1)}] There is a constant $c_1\geq0$ such that 
    \[\bigg|\frac{h(FG)}{d(FG)}-\frac{h(G)}{d(G)}\bigg|\leq \frac{c_1}{d(G)}\]
    for all $F,G\in \mathcal{S}$.\vspace{.2cm}  
    \item[\textup{(2)}] There are constants $c_2,c_2',L_0>0$ such that \[c_2 d(F)\leq h(F)\leq c_2' d(F)\] 
    for all $F\in \mathcal{S}_{\geq L_0}$. \vspace{.2cm}
    \item[\textup{(3)}] There is a constant $\rho>0$ and a constant $c_G>0$ for all $G\in \mathcal{S}$ such that 
    \[\#\{F\in \mathcal{C}_G\,: d(F)\leq x\}= c_Gx^{\rho}+o(x^\rho).
    \vspace{.2cm} 
    \]
\end{enumerate}
Then there is a constant $c_3>0$ such that 
\[\#\{F\in \mathcal{S}\,: h(F)\leq x\}=c_3x^{\rho}+o(x^\rho),\]
and $c_3$ is given explicitly by 
\[c_3=\lim_{L\rightarrow\infty}\sum_{G\in\mathcal{T}_L}c_G\Big(\frac{h(G)}{d(G)}\Big)^{-\rho}.\]
\end{thm}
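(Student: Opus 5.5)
We follow the sketch of subsection \ref{subsec:sketch}, using the suffix classes $\mathcal{C}_G$ in place of cosets. Fix $L>\max(1,L_0)$ and split
\[\#\{F\in\mathcal{S}: h(F)\le x\}=\#\{F\in\mathcal{S}\setminus\mathcal{S}_{\ge L}: h(F)\le x\}+\sum_{G\in\mathcal{T}_L}\#\{F\in\mathcal{C}_G: h(F)\le x\},\]
using Lemma \ref{lem:decomp}. The first term is bounded by $\#\{F: d(F)<L\}$, which is finite (at most $(\#\Gamma)^{\log_2 L}$ elements by Lemma \ref{lem:d+deletion}), hence $o(x^\rho)$.

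For $F\in\mathcal{C}_G$ we have $F=UG$ with $U\in\mathcal{S}^1$, by Lemma \ref{lem:suffix+deletion}(3). Hypothesis (1) (trivially true if $U=1$) then gives
\[\Big|\frac{h(F)}{d(F)}-\psi(G)\Big|\le \frac{c_1}{d(G)}\le \frac{c_1}{L},\qquad \psi(G):=\frac{h(G)}{d(G)}.\]
Since $G\in\mathcal{S}_{\ge L_0}$, hypothesis (2) gives $\psi(G)\in[c_2,c_2']$. Setting $\epsilon_L=c_1/(Lc_2)$, which tends to $0$, we get
\[\psi(G)(1-\epsilon_L)d(F)\le h(F)\le \psi(G)(1+\epsilon_L)d(F).\]
Therefore
\[\#\Big\{F\in\mathcal{C}_G: d(F)\le \frac{x}{\psi(G)(1+\epsilon_L)}\Big\}\le \#\{F\in\mathcal{C}_G: h(F)\le x\}\le \#\Big\{F\in\mathcal{C}_G: d(F)\le \frac{x}{\psi(G)(1-\epsilon_L)}\Big\}.\]
Apply hypothesis (3) to each of the finitely many $G\in\mathcal{T}_L$ and sum. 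Writing $A_L:=\sum_{G\in\mathcal{T}_L}c_G\psi(G)^{-\rho}$, this yields
\[(1+\epsilon_L)^{-\rho}A_L\le\liminf_{x\to\infty} \frac{N_h(x)}{x^\rho}\le \limsup_{x\to\infty} \frac{N_h(x)}{x^\rho}\le (1-\epsilon_L)^{-\rho}A_L,\]
where $N_h(x)$ denotes the height count.

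The main obstacle is showing that $A_L$ converges as $L\to\infty$ to a positive finite limit. The $\liminf$ and $\limsup$ above do not depend on $L$, so the displayed inequalities give
\[(1-\epsilon_L)^{\rho}\limsup N_h/x^\rho\le A_L\le (1+\epsilon_L)^{\rho}\liminf N_h/x^\rho.\]
Two further facts are needed.

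\emph{Upper bound.} $\limsup N_h/x^\rho<\infty$, because $N_h(x)\le \#\{F:d(F)< L_0\}+\#\{F:d(F)\le x/c_2\}$. The last count is $O(x^\rho)$ by applying the same decomposition with $d$ itself. Concretely, $\mathcal{S}_{\ge L_0}=\bigsqcup_{\mathcal{T}_{L_0}}\mathcal{C}_G$ is a finite union, and hypothesis (3) applies to each piece.

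\emph{Lower bound.} $\liminf N_h/x^\rho>0$, since $N_h(x)\ge\#\{F\in\mathcal{C}_G: d(F)\le x/c_2'\}$ for any fixed $G\in\mathcal{S}_{\ge L_0}$, and this is $\gg x^\rho$ because $c_G>0$.

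Letting $L\to\infty$, so that $\epsilon_L\to0$, we get $\limsup N_h/x^\rho\le\liminf A_L\le\limsup A_L\le\liminf N_h/x^\rho$. Hence all four quantities coincide and equal a common value $c_3=\lim_L A_L$, which lies in $(0,\infty)$. This is exactly the claimed asymptotic with the stated formula for $c_3$. The only care needed is the order of limits: first $x\to\infty$ with $L$ fixed, using the finiteness of $\mathcal{T}_L$ from Lemma \ref{lem:properties+sigma+T}, and then $L\to\infty$.
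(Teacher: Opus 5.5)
Your proof is correct and follows essentially the same route as the paper: the same partition of $\mathcal{S}_{\geq L}$ into suffix classes $\mathcal{C}_G$ for $G\in\mathcal{T}_L$, the same sandwich with $\epsilon_L=c_1/(c_2L)$, and the same order of limits (first $x\to\infty$, then $L\to\infty$). The differences are cosmetic: your separate upper and lower bounds on $N_h(x)/x^\rho$ already follow from the sandwich at any single admissible $L$, since $0<A_L<\infty$. One small addition is needed: also require $L>c_1/c_2$, as the paper does, so that $\epsilon_L<1$ and the bound with $1-\epsilon_L$ in the denominator makes sense; this is harmless because only large $L$ matter.
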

\begin{proof}[(Proof of Theorem \ref{thm:d+to+h})]
Define the function $\psi:\mathcal{S}\rightarrow\mathbb{R}_{\geq0}$ given by $\psi(F)=h(F)/d(F)$. We first note that $\psi$ is nearly constant on classes. To see this, we assume throughout that  
\begin{equation}\label{eq:L+bd}
L>\max\{L_0,c_1/c_2\}.
\end{equation} 
Pick $G\in\mathcal{T}_L$ and suppose that $F\in\mathcal{C}_G$. Then $F=F'G$ for some $F'\in \mathcal{S}^1$ by Lemma \ref{lem:suffix+deletion} so assumption (1) implies that $|\psi(F)-\psi(G)|\leq c_1/d(G)$. Dividing by $\psi(G)$ we see that 
\begin{equation}\label{eq:epsilon+L}
\Big|\frac{\psi(F)}{\psi(G)}-1\Big|\leq\frac{c_1}{d(G)}\frac{d(G)}{h(G)}\leq \frac{c_1}{c_2L}:=\epsilon_L\rightarrow 0
\end{equation}
as $L\rightarrow\infty$. Here we use that $h(G)\geq c_2d(G)\geq c_2 L$ by assumption (2) and the fact that $G\in\mathcal{T}_L$ for some $L>L_0$. Note also that $\epsilon_L<1$ since $L\geq c_1/c_2$ by \eqref{eq:L+bd}. On the other hand, for all $F$ we have that $h(F)\leq x$ if and only if $d(F)\leq x/\psi(F)$. Moreover, for $F\in\mathcal{C}_G$ and $G\in\mathcal{T}_L$ we have from \eqref{eq:epsilon+L} that 
\[(1-\epsilon_L)\psi(G)\leq \psi(F)\leq (1+\epsilon_L)\psi(G).\]
Hence, we see that \vspace{.15cm} 
\[
\scalemath{.9}{
\#\bigg\{F\in\mathcal{C}_G\,:\, d(F)\leq\frac{x}{\psi(G)(1+\epsilon_L)}\bigg\}\leq \#\{F\in\mathcal{C}_G\,:\, h(F)\leq x\}\leq 
\#\bigg\{F\in\mathcal{C}_G\,:\, d(F)\leq\frac{x}{\psi(G)(1-\epsilon_L)}\bigg\}}
\vspace{.15cm} 
.\]
In particular, we deduce that
\vspace{.1cm}
\begin{equation}\label{eq:class+setimate}
\scalemath{.93}{
c_G(\psi(G))^{-\rho}(1+\epsilon_L)^{-\rho}+o(1)\leq \frac{\#\{F\in\mathcal{C}_G\,:\, h(F)\leq x\}}{x^\rho}\leq c_G(\psi(G))^{-\rho}(1-\epsilon_L)^{-\rho}+o(1)}
\vspace{.1cm} 
\end{equation}
as $x\rightarrow\infty$ for all $G\in\mathcal{T}_L$ by assumption (3). From here, define
\[M(x)=\#\{F\in \mathcal{S}\,:\, h(F)\leq x\}\;\;\;\;\text{and}\;\;\;\; 
\kappa_{L}=\#\{F\in \mathcal{S}\,:\, d(F)\leq L\}<\infty.
\]
Note that $\kappa_{L}$ is finite, since $d(F)\leq L$ implies that $2^{|\textup{nf}(F)|}\leq d(F)\leq L$ by Lemma \ref{lem:d+deletion}, so there are at most $\#\Gamma^{\log_2 (L)}$ such elements. From here, the decomposition in Lemma \ref{lem:decomp} implies that \vspace{.1cm} 
\begin{equation}\label{eq:almost}
\Big| M(x)-\sum_{G\in \mathcal{T}_L}\#\{F\in\mathcal{C}_G\,:\, h(F)\leq x\}\Big|\leq\kappa_{L}. 
\end{equation}
Now define the constant 
\[\beta_L:=\sum_{G\in \mathcal{T}_L}c_G\psi(G)^{-\rho}.\]
Then combining \eqref{eq:class+setimate} and \eqref{eq:almost} we deduce that 
\begin{equation}\label{eq:liminf+limsup}
\beta_L(1+\epsilon_L)^{-\rho}\leq \liminf_{x\rightarrow\infty}\frac{M(x)}{x^\rho}\leq \limsup_{x\rightarrow\infty}\frac{M(x)}{x^\rho}\leq \beta_L(1-\epsilon_L)^{-\rho}.  
\end{equation}
In particular, we see from \eqref{eq:liminf+limsup} that
\vspace{.1cm} 
\[
\scalemath{.9}{
\liminf_{x\rightarrow\infty}\frac{M(x)}{x^\rho}\leq \limsup_{x\rightarrow\infty}\frac{M(x)}{x^\rho}\leq \beta_L(1-\epsilon_L)^{-\rho}=\Big(\frac{1-\epsilon_L}{1+\epsilon_L}\Big)^{-\rho}\beta_L(1+\epsilon_L)^{-\rho}\leq 
\Big(\frac{1-\epsilon_L}{1+\epsilon_L}\Big)^{-\rho}
\liminf_{x\rightarrow\infty}\frac{M(x)}{x^\rho}}
\vspace{.1cm}
\]
holds for all $L$ satisfying \eqref{eq:L+bd}. Hence, letting $L\rightarrow\infty$ so that $\epsilon_L\rightarrow0$, we deduce that 
\[
\liminf_{x\rightarrow\infty}\frac{M(x)}{x^\rho}=\limsup_{x\rightarrow\infty}\frac{M(x)}{x^\rho}=\lim_{x\rightarrow\infty}\frac{M(x)}{x^\rho}.  
\]
Call $c_3=\lim M(x)/x^{\rho}$ as $x\rightarrow\infty$. Then substituting $c_3$ into \eqref{eq:liminf+limsup}, we have that 
\[(1+\epsilon_L)^{-\rho}\leq \frac{c_3}{\beta_L}\leq (1-\epsilon_L)^{-\rho}\]
for all $L$ satisfying \eqref{eq:L+bd}. Hence. letting $L\rightarrow\infty$ we deduce that $c_3=\displaystyle{\lim_{L\rightarrow\infty}\beta_L}$ as claimed. 

Finally, we show that $c_3>0$. To see this assume that $c_3=0$. Then assumptions (2) and (3), our asymptotic for $M(x)$ already established, and Lemma \ref{lem:decomp} together imply that
\vspace{.1cm} 
\begin{equation*}\begin{split}
\bigg(\sum_{G\in\mathcal{T}_{L_0}}c_G\bigg)(c_2')^{-\rho}x^{\rho}+o(x^{\rho})&=\sum_{G\in\mathcal{T}_{L_0}}\#
\big\{F\in\mathcal{C}_G\,:d(F)\leq x/(c2')\big\} \\[5pt]
&=\#\{F\in \mathcal{S}_{\geq L_0}\,:d(F)\leq x/(c2')\}\\[5pt]
&\leq \#\{F\in \mathcal{S}\,:d(F)\leq x/(c2')\}\\[5pt] 
&=\#\{F\in \mathcal{S}\,:h(F)\leq x\}\\[5pt] 
&=o(x^{\rho})
,
\vspace{.1cm} 
\end{split} 
\end{equation*}
and we obtain a contradiction. Here we use that $c_G,c_2'>0$ by assumption.  
\end{proof}
\begin{remark} It is worth pointing out that the constant  $c_4:=\sum_{G\in\mathcal{T}_L}c_G$ gives the constant multiple of $x^{\rho}$ in the asymptotic growth of $d$ on $\mathcal{S}$, independent of $L$. To see this, define
\vspace{.1cm} 
\[N(x)=\#\{F\in \mathcal{S}: d(F)\leq x\}\;\;\;\;\text{and}\;\;\;\; \kappa_{L}=\#\{F\in \mathcal{S}\,:\, d(F)\leq L\}<\infty.\] 
Indeed, $\kappa_{L}$ is finite since $d(F)\leq L$ implies $2^{|\textup{nf}(F)|}\leq L$ by Lemma \ref{lem:d+deletion}, and there are at most $\#\Gamma^{\log_2(L)}$ such $F$'s. Likewise, the decomposition in Lemma \ref{lem:decomp} implies that
\[\Big| N(x)-\sum_{G\in \mathcal{T}_L}\#\{F\in\mathcal{C}_G\,:\, d(F)\leq x\}\Big|\leq\kappa_{L}.\]
Hence, assumptions (3) implies that 
\[c_4x^\rho+o(x^{\rho})=\Big(\sum_{G\in\mathcal{T}_L}c_G\Big)x^{\rho}+o(x^\rho)=N(x),\]
since $\mathcal{T}_L$ is finite. Therefore, $N(x)\sim c_4x^{\rho}$ is claimed.    
\end{remark}
\begin{remark}\label{rem:probability} In particular, the measure $\nu_L$ on $\mathcal{T}_L$ induced by $\nu_L(G):=c_G/c_4$ gives a probability measure on $\mathcal{T}_L$, and we can view the coefficient $c_3$ from this point of view, 
\[c_3=c_4\lim_{L\rightarrow\infty}\mathbb{E}_{\nu_L}[\psi^{-\rho}].\]
Moreover, the dominated convergence theorem suggests that this limit is the expected value of $\psi^{-\rho}$, some type of canonical height function on an infinite object raised to the $-\rho$ on some limiting probability space. We plan to study this perspective in future work. 
\end{remark}
\subsection{Heights in Polarized Semigroups}
Throughout this subsection, we assume that $\mathcal{S}$ is a polarized semigroup of morphisms on a variety $V$, i.e., there exists an  ample invertible sheaf $\mathcal{L}$ of $V$ and integers $d(F)\geq2$ for all $F\in \mathcal{S}$ such that $F^*(\mathcal{L}) \cong \mathcal{L}^{\otimes d(F)}$. In particular, the multiplicative map $d$ on $\mathcal{S}$ we refer to in this section is given by this relation. Moreover, we assume that $\mathcal{L}$ and all maps in $\mathcal{S}$ are defined over a number field $K$ and associate to $\mathcal{L}$ a Weil height function $h_{\mathcal{L}}:V(\overline{K})\rightarrow\mathbb{R}_{\geq0}$. Then, of fundamental importance, we have the following generalization of Tate's telescoping lemma; see \cite[Lemma 2.1]{MR3958050} for a proof.   
\begin{lem}\label{lem:tate} Let $\mathcal{S}$, $\mathcal{L}$, $d$, and $h_{\mathcal{L}}$ be as above. Then there is a $B=B(\mathcal{S},\mathcal{L},d,h)\geq0$ such that 
\[\Big|\frac{h_{\mathcal{L}}(F(Q))}{d(F)}-h_{\mathcal{L}}(Q)\Big|\leq B\]
for all $F\in \mathcal{S}$ and all $Q\in V(\overline{K})$.  
\end{lem}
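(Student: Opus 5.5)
The plan is the classical Tate telescoping argument, applied along an arbitrary word rather than along the iterates of a single map. Write $d=d(F)$ for $F\in\mathcal{S}$. The starting point is the standard functoriality of Weil heights. Since $F^*(\mathcal{L})\cong\mathcal{L}^{\otimes d(F)}$, the height machine gives a constant $C_F$ with
\[\bigl|h_{\mathcal{L}}(F(Q))-d(F)\,h_{\mathcal{L}}(Q)\bigr|\leq C_F\qquad\text{for all } Q\in V(\overline{K}).\]
Here $C_F$ depends on $F$ but not on $Q$. We apply this only to the finitely many generators $f_1,\dots,f_r$ of $\mathcal{S}$. Set $C=\max_i C_{f_i}$ and $\delta=\min_i d(f_i)\geq 2$.

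Next, take any $F\in\mathcal{S}$ and write it as a word $F=g_n\circ\cdots\circ g_1$ with each $g_j\in\{f_1,\dots,f_r\}$; any representation will do. Put $G_0=\mathrm{Id}_V$ and $G_j=g_j\circ G_{j-1}$, and extend $d$ by $d(\mathrm{Id}_V)=1$. Since $d$ is multiplicative, $d(G_j)=d(g_j)\,d(G_{j-1})$. Dividing the generator estimate, applied at the point $G_{j-1}(Q)$, by $d(G_j)$ gives
\[\Big|\frac{h_{\mathcal{L}}(G_j(Q))}{d(G_j)}-\frac{h_{\mathcal{L}}(G_{j-1}(Q))}{d(G_{j-1})}\Big|\leq \frac{C}{d(G_j)}\leq \frac{C}{\delta^{j}}.\]
The last inequality holds because $d(G_j)\geq\delta^j$.

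Finally, sum these $n$ differences. The left-hand sides telescope to $\bigl|h_{\mathcal{L}}(F(Q))/d(F)-h_{\mathcal{L}}(Q)\bigr|$. The right-hand sides are bounded by the geometric series $C\sum_{j\geq1}\delta^{-j}=C/(\delta-1)\leq C$. So $B=C$ works, and it is uniform in $F$ and in the length of the word.

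The main obstacle is only the existence of the uniform constants $C_{f_i}$, which is the standard functoriality of heights for morphisms with $f^*\mathcal{L}\cong\mathcal{L}^{\otimes d}$. Everything else is elementary. One point needs care: the isomorphism class $F^*\mathcal{L}\cong\mathcal{L}^{d(F)}$ must be compatible with composition, i.e., $d$ must be multiplicative. This is given in the statement, and it also follows directly from ampleness, since $\mathcal{L}$ is then non-torsion in $\Pic(V)$.
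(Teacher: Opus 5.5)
Your proof is correct and follows the same route as the paper. The paper does not reprove this lemma but cites \cite[Lemma 2.1]{MR3958050}, and its sketch in Section~\ref{subsec:sketch} makes clear that the intended argument is Tate's telescoping, which is what you do: write $F$ as a word in the finitely many generators, apply the generator-wise functoriality estimate at each step, and sum the geometric series $C\sum_{j\geq1}\delta^{-j}\leq C$.
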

Next, we note the following fact from \cite[Lemma 2.11]{MR4988033}, which will be used to reduce the height counting problem for non-preperiodic points to the height counting problem for points of sufficiently large height; recall that $P$ is preperiodic if there exist $g\in\mathcal{S}$ and $f\in \mathcal{S}^1$ such that $g\circ f(P)=f(P)$.  
\begin{lem}\label{lem:preperiodic} Let $\mathcal{S}$, $\mathcal{L}$, $d$, and $h_{\mathcal{L}}$ be as above. Then the following statements hold:
\vspace{.1cm} 
\begin{enumerate}
    \item[\textup{(1)}] If $P$ is a preperiodic point for $\mathcal{S}$, then there exists a constant $C$ such that $h_{\mathcal{L}}(P)\leq C$. \vspace{-.2cm}    
    \item[\textup{(2)}] Assume $\mathcal{S}$ is finitely generated. If $P$ is a not a preperiodic point for $\mathcal{S}$ and $C'\geq0$, then there exists $L=L(P,C')$ such that $h_{\mathcal{L}}(G(P))>C'$ for all $G\in \mathcal{S}$ with $d(G)\geq L$.   
\end{enumerate}
\end{lem}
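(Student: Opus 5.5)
For part (1), I would use Lemma \ref{lem:tate} with $Q=P$ and with an iterate of the periodic relation. Suppose $g\circ f(P)=f(P)$ with $g\in\mathcal{S}$ and $f\in\mathcal{S}^1$, and set $Q=f(P)$. Then $g^n(Q)=Q$ for every $n\geq1$. Applying Lemma \ref{lem:tate} to $F=g^n$ gives
\[\Big|\frac{h_{\mathcal{L}}(Q)}{d(g)^n}-h_{\mathcal{L}}(Q)\Big|\leq B .\]
Since $d(g)\geq2$, we get $h_{\mathcal{L}}(Q)(1-2^{-n})\leq B$, so $h_{\mathcal{L}}(Q)\leq 2B$.

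Next I would pull this bound back to $P$. If $f=1$ we are done. Otherwise Lemma \ref{lem:tate} applied to $F=f$ gives $h_{\mathcal{L}}(P)\leq h_{\mathcal{L}}(f(P))/d(f)+B\leq 2B/2+B=2B$. So $C=2B$ works, and this $C$ depends only on $\mathcal{S},\mathcal{L},h$, not on $P$.

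For part (2), I would argue by contradiction using Northcott finiteness. Suppose no such $L$ exists. Then there are $G_k\in\mathcal{S}$ with $d(G_k)\to\infty$ and $h_{\mathcal{L}}(G_k(P))\leq C'$. The points $G_k(P)$ all lie in $V(K')$, where $K'=K(P)$ is a fixed number field, because every map in $\mathcal{S}$ is defined over $K$. Since $\mathcal{L}$ is ample, Northcott's theorem says the set of such points of bounded height is finite.

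It therefore suffices to show that, for $P$ non-preperiodic, each point $Q$ arises as $G(P)$ for only finitely many $G\in\mathcal{S}$. Suppose instead that $Q=G_k(P)$ for infinitely many distinct $G_k$. Finite generation gives only finitely many elements of bounded word length, so these $G_k$ have unbounded word length. Here I would first try the suffix-closed normal form machinery of Lemma \ref{lem:suffix+deletion}: write each $G_k$ as a word in the generators and look at its suffixes $G_k^{(i)}$. The points $G_k^{(i)}(P)$ are of the form $f(P)$ with $f\in\mathcal{S}^1$, and I want to bound their heights uniformly. Since $Q=U(G_k^{(i)}(P))$ for some $U\in\mathcal{S}^1$ (part (3) of Lemma \ref{lem:suffix+deletion}), Lemma \ref{lem:tate} gives
\[h_{\mathcal{L}}(G_k^{(i)}(P))\leq \frac{h_{\mathcal{L}}(Q)}{d(U)}+B\leq h_{\mathcal{L}}(Q)+B .\]
So every point in the forward paths from $P$ to $Q$ has height at most $C'+B$, and by Northcott these intermediate points lie in a finite set $\Sigma$.

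A path of length longer than $\#\Sigma$ through $\Sigma$ must repeat a point, that is, $f_2(P)=f_1(P)$ with $f_2=U'f_1$ and $U'\in\mathcal{S}$ (here $f_1$ is the shorter suffix). This says $U'\circ f_1(P)=f_1(P)$, which is exactly preperiodicity of $P$, a contradiction. Hence the word lengths are bounded, there are only finitely many $G$ with $h_{\mathcal{L}}(G(P))\leq C'$, and we can take $L$ larger than all of their $d$-values.

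The main obstacle I anticipate is making the repetition step rigorous. The argument needs an honest left factorization $f_2=U'f_1$ in which $U'$ is a genuine element of $\mathcal{S}$ rather than the identity. I would get this from distinct suffix lengths together with injectivity, part (2) of Lemma \ref{lem:suffix+deletion}. Alternatively, I would simply use the plain word factorization in the generators, which is enough here and does not need the normal form at all.
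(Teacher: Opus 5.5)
Your argument is correct, and it differs from the paper mainly in being self-contained. The paper does not prove part (1) at all; it quotes \cite[Lemma 2.11]{MR4988033}. It proves part (2) only by reduction to the same lemma, which supplies an $N=N(V,\mathcal{S},P,C')$ with $h_{\mathcal{L}}(G(P))>C'$ for every $G$ of word length at least $N$. It then converts this word-length threshold into a $d$-threshold via $d(G)\leq d_{\max}^{|G|}$, taking $L=d_{\max}^N$.

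You instead derive both parts from Lemma \ref{lem:tate} and Northcott. Part (1) comes with the explicit uniform constant $C=2B$. For part (2), you bound every intermediate orbit point along a word for $G$ by $C'+B$ and pigeonhole in the finite Northcott set $\Sigma$. In effect you reprove the cited word-length lemma with $N=\#\Sigma$. You then replace the $d_{\max}^N$ conversion by the observation that only finitely many $G$ satisfy $h_{\mathcal{L}}(G(P))\leq C'$. The paper's route is shorter given the prior work. Yours makes the dependence of $L$ on $P$ and $C'$ explicit through $\#\Sigma$, and it needs nothing beyond Tate telescoping and Northcott.

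Two small points. First, commit to the plain word factorization rather than Lemma \ref{lem:suffix+deletion}. That lemma presupposes a suffix-closed normal form, which is not among the hypotheses of this statement. It is also unnecessary: for positions $i<j$ along a word, the subword of length $j-i\geq 1$ is a nonempty product of generators and hence lies in $\mathcal{S}$, so no injectivity is needed.

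Second, the detour through a fixed point $Q$ with infinitely many preimages can be dropped. The pigeonhole step already shows that every word representing any $G$ with $h_{\mathcal{L}}(G(P))\leq C'$ has length less than $\#\Sigma$. That gives the finiteness directly.
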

\begin{proof} Only statement (2) differs from the original statements in \cite[Lemma 2.11]{MR4988033}. Assume that $\mathcal{S}=\langle f_1,\dots,f_r\rangle$ and let $d_{\max}:=\max\{d(f_1),\dots,d(f_r)\}$. Let $N=N(V,\mathcal{S},P,C')$ be the constant in Lemma \cite[Lemma 2.11]{MR4988033} and assume that $G\in \mathcal{S}$ satisfies $d(G)\geq d_{\max}^N$. Then it is clear that $d_{\max}^{|G|}\geq d(G)$ so that $|G|\geq \log_{d_{\max}}(d(G))$. But then we see that 
\[|G|\geq \log_{d_{\max}}(d(G))\geq\log_{d_{\max}}(d_{\max}^N)\geq N.\]
Hence, \cite[Lemma 2.11]{MR4988033} implies that $h_{\mathcal{L}}(G(P))> C'$. In particular, the constant $L:=d_{\max}^N$ has the desired property.  
\end{proof}

In particular, if we view $P$ as fixed and define $h:\mathcal{S}\rightarrow\mathbb{R}_{\geq0}$ by $h(F):=h_{\mathcal{L}}(F(P))$, then the pair of functions $(d,h)$ on $\mathcal{S}$ satisfy the hypotheses (1) and (2) of Theorem \ref{thm:d+to+h} for all non-preperiodic basepoints. 
\begin{lem}\label{lem:(d,h)+preperiodic}
 Let $\mathcal{S}$, $\mathcal{L}$, $d$, and $h_{\mathcal{L}}$ be as above and assume that $(\mathcal{S},\Gamma,\mathcal{N})$ is a suffix-closed normal form on $\mathcal{S}$. Moreover, assume that $P\in V(\overline{K})$ is a not a preperiodic point for $\mathcal{S}$. Define the function $h: \mathcal{S}\rightarrow\mathbb{R}_{\geq0}$ given by $h(F):=h_{\mathcal{L}}(F(P))$. Then the pair $(d,h)$ satisfy the hypotheses \textup{(1)} and \textup{(2)} of Theorem \ref{thm:d+to+h}. 
\end{lem}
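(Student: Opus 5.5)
To verify hypothesis (1) of Theorem \ref{thm:d+to+h}, apply Lemma \ref{lem:tate} with $Q=G(P)$ and the map $F$. Since $d$ is multiplicative, $d(FG)=d(F)d(G)$, and the lemma gives
\[\Big|\frac{h_{\mathcal{L}}(F(G(P)))}{d(F)}-h_{\mathcal{L}}(G(P))\Big|\leq B.\]
Divide this by $d(G)$ and use $(F\circ G)(P)=F(G(P))$. This yields
\[\Big|\frac{h(FG)}{d(FG)}-\frac{h(G)}{d(G)}\Big|\leq \frac{B}{d(G)},\]
so (1) holds with $c_1:=B$, for all $F,G\in\mathcal{S}$. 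Note that no non-preperiodicity is needed for this step.

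For hypothesis (2), the upper bound comes from Lemma \ref{lem:tate} applied with $Q=P$. It gives $h(F)\leq d(F)(h_{\mathcal{L}}(P)+B)$ for every $F\in\mathcal{S}$. To make the constant positive, set $c_2':=h_{\mathcal{L}}(P)+B+1$.

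The lower bound $h(F)\geq c_2 d(F)$ is the main point, and it is where non-preperiodicity enters. The plan is to use Lemma \ref{lem:preperiodic}(2) with $C':=2B+1$. This gives an $L_1=L(P,C')$ such that $h_{\mathcal{L}}(G(P))>2B+1$ whenever $d(G)\geq L_1$.

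The difficulty is that Tate's lemma alone gives $h(F)/d(F)\geq h_{\mathcal{L}}(Q)-B$ only for $F$ factored as $F=F'G$ with $Q=G(P)$, and we need $G$ to be a suffix of $F$ with large $d$. For this I would use the suffix-closed normal form. Take $F$ with $d(F)\geq L_0:=L_1$, and let $G:=\sigma_{L_1}(F)\in\mathcal{T}_{L_1}$. By Lemma \ref{lem:decomp}, $F\in\mathcal{C}_G$, so $F=F'G$ with $F'\in\mathcal{S}^1$ by Lemma \ref{lem:suffix+deletion}(3). There are two cases.

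\emph{Case $F'=1$.} Then $F=G$. Since $G\in\mathcal{T}_{L_1}$, we have $d(G)<L_1 d_{\max}$, where $d_{\max}$ is the largest $d$-value of a generator in $\Gamma$. Hence $h(F)>2B+1\geq (2B+1)\,d(F)/(L_1 d_{\max})$.

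\emph{Case $F'\in\mathcal{S}$.} Apply Lemma \ref{lem:tate} to $F'$ at $Q=G(P)$. This gives
\[h(F)\geq d(F')\big(h_{\mathcal{L}}(G(P))-B\big)\geq d(F')(B+1).\]
Using $d(F')=d(F)/d(G)\geq d(F)/(L_1d_{\max})$, we get $h(F)\geq d(F)/(L_1 d_{\max})$.

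In both cases we may take $c_2:=1/(L_1d_{\max})>0$. I expect no real obstacle beyond this bookkeeping. One could even skip the normal form and use an arbitrary factorization through any element of $d$-value between $L_1$ and $L_1d_{\max}$, but the $\sigma_L$ construction supplies such a factorization directly.
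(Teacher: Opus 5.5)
Your proposal is correct and follows the paper's proof. Tate's lemma at $Q=G(P)$ gives (1) with $c_1=B$, Tate's lemma at $Q=P$ gives the upper bound, and the lower bound comes from Lemma \ref{lem:preperiodic}(2) combined with the factorization $F=F'G$, $G=\sigma_L(F)\in\mathcal{T}_L$. The only difference is cosmetic: the paper takes $C'=B$ and sets $c_2$ to be the minimum of $(h_{\mathcal{L}}(G(P))-B)/d(G)$ over the finite set $\mathcal{T}_L$, whereas your choice $C'=2B+1$ with the bound $d(G)<L_1d_{\max}$ gives the explicit constant $c_2=1/(L_1d_{\max})$, and your extra $+1$ in $c_2'$ harmlessly ensures strict positivity.
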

\begin{proof}
Let $B$ be the constant from Lemma \ref{lem:tate}. For hypothesis (1) of Theorem \ref{thm:d+to+h}, set $Q=G(P)$ and multiply the bound in Lemma \ref{lem:tate} by $1/d(G)$ so that $c_1:=B$ has the desired property. 

For hypothesis (2) of Theorem \ref{thm:d+to+h}, set $c_2':=h_{\mathcal{L}}(P)+B$ so that Lemma \ref{lem:tate} applied to the point $P=Q$ gives 
\[h(F)\leq d(F)(h_{\mathcal{L}}(P)+B)=c_2' d(F),\]
with no assumptions on $P$ or $F$. For the lower bound, assume that $P$ is not a preperiodic point for $\mathcal{S}$ and choose the constant $L=L(P,B)$ associated to $P$ and $B$ from Lemma \ref{lem:preperiodic}. Define 
\[c_2:=\min_{G\in\mathcal{T}_{L}}\Big\{\frac{1}{d(G)}\Big(h_{\mathcal{L}}\big(G(P))-B\Big)\Big\}.\]
Then we note that $c_2>0$: indeed $h_{\mathcal{L}}\big(G(P))-B>0$ for all $G\in\mathcal{T}_L$ by definition of $L$ and the fact that $d(G)\geq L$ by definition of $\mathcal{T}_L$; see \eqref{eq:TL}. Moreover, $\mathcal{T}_L$ is finite by Lemma \ref{lem:properties+sigma+T}. Hence, $c_2$ is the minimum of finitely many strictly positive numbers and so strictly positive. Now suppose that $d(F)\geq L$. Then Lemma \ref{lem:suffix+deletion} and Lemma \ref{lem:decomp} imply that $F=W\circ G$ for some $W\in \mathcal{S}^1$ and $G\in\mathcal{T}_L$. But then Lemma \ref{lem:tate}, with $W$ in place of $F$ and $G(P)$ in place of $Q$, implies that 
\[
\scalemath{.95}{
h(F)=h_{\mathcal{L}}(F(P))=h_{\mathcal{L}}(W(G(P)))\geq d(W)\Big(h_{\mathcal{L}}\big(G(P))-B\Big)=\frac{d(F)}{d(G)}\Big(h_{\mathcal{L}}\big(G(P))-B\Big)\geq c_2d(F)}.   
\]
Thus, $L_0=L(P,B)$ and $c_2$ satisfy the desired conditions for hypothesis (2) of Theorem \ref{thm:d+to+h}. 
\end{proof}
\section{Counting Weighted Walks on a Directed Multigraph}\label{sec:counting+walks}
In particular, to apply Theorem \ref{thm:d+to+h} to heights, it remains to study the growth rate of $d$ in $\mathcal{C}_G$ subject to a normal form. Specifically, we still need assumption (3) of Theorem \ref{thm:d+to+h} to hold. As we will see in Section \ref{sec:trace}, when $\mathcal{S}$ is partially commutative semigroups with the Cartier-Foata normal form \cite{MR239978}, this problem (of counting $d$ on $\mathcal{C}_G$) is equivalent to counting walks on an associated directed graph; see Proposition \ref{prop:CF+implies+suffix+closed+directed+graph+walks} below. 

With this in mind, we derive Dirichlet series generating functions for the number of walks of a given multiplicative weight in a directed multigraph in this section. This process is likely well known to the experts in graph theory. Nevertheless, without an exact reference, we have chosen to derive the asymptotics we need from scratch. In particular, we first translate the problem of counting walks on a graph into a matrix generating function problem via the Transfer Matrix Method \cite[\S V.6]{flajolet2009analytic}. We then study the singularities of this generating function via the Perron--Frobenius Theorem \cite[\S8]{horn2012matrix}. Finally, we extract asymptotics from singularities via the Wiener--Ikehara Theorem \cite[\S8.3]{montgomery2007multiplicative}.

\subsection{Generating Functions}
Let $\mathcal{D}=(\mathcal{V},\mathcal{E}, \mu)$ be a finite directed multigraph with vertex set $\mathcal{V}=\{v_1,\dots, v_n\}$, edge set $\mathcal{E}$, and a mapping $\mu:\mathcal{E}\rightarrow\mathcal{V}\times\mathcal{V}$. In particular, we allow multiple edges between vertices and loops (in this section), and we use $\mu$ to indicate the initial and terminal vertex of an edge. Moreover, for $\epsilon\in\mathcal{E}$ we write $\epsilon=a\rightarrow b$ in place of $\mu(\epsilon)=(a,b)\in \mathcal{V}\times \mathcal{V}$ to ease the notation. Since we assume that $\mathcal{E}$ is also finite, we can define the \emph{transfer matrix} $A=(A_{ij})\in \textup{Mat}_{n\times n}(\mathbb{Z}_{\geq0})$ given by
\[A_{ij}:=\#\{\epsilon\in\mathcal{E}:\epsilon=v_i\rightarrow v_j\}.\]
Furthermore, to each vertex $v_i\in\mathcal{V}$ we assign a weight $d(v_i)>1$. Later, we will assume that the weights are integers but this restriction is not necessary at this point.

Next, we call a pair of sequences $\gamma=\big((a_0,\dots, a_{m}),(\epsilon_1,\dots,\epsilon_{m})\big)$ for some $a_i\in\mathcal{V}$, some $\epsilon_j\in\mathcal{E}$, and some $m\geq1$ a \emph{walk} in $\mathcal{D}$ of length $|\gamma|=m$ if $\epsilon_i=a_{i-1}\rightarrow a_{i}$ for all $1\leq i\leq m$. Likewise, we view a single vertex as a walk of length $0$, i.e., $\gamma=((a_0),\varnothing)$. Moreover, we extend $d$ to walks in $\mathcal{D}$ multiplicatively by defining the weight of $\gamma$ to be
\[d(\gamma)=d(a_0)\cdots d(a_m).\]

In particular, it is our goal in this section to find a generating function for walks of a given length and use this generating function to count such walks. More generally, with suffix classes and partially commutative semigroups in mind, we want to count walks with a given terminal segment. To make this precise, we fix a bit more notation. Specifically, given a walk $\gamma$ with vertex sequence $(a_0,\dots, a_m)$ in $\mathcal{D}$ we write $\iota(\gamma)=a_0$ and $\tau(\gamma)=a_m$ to be the initial and terminal vertices of $\gamma$ respectively. In particular, if $\gamma_1$ and $\gamma_2$ are walks in $\mathcal{D}$ and $\epsilon=\tau(\gamma_1)\rightarrow\iota(\gamma_2)$ is an edge of $\mathcal{D}$, then we define the \emph{concatenation} $\gamma_1\cdot_{\epsilon} \gamma_2$ to be the walk obtained by concatenating the sequences of vertices and adding the edge $\epsilon$ in the middle to join the walks: for $\gamma_1=\big((a_0,\dots, a_m),(\epsilon_1,\dots,\epsilon_{m})\big)$ and $\gamma_2=\big((a_0',\dots, a_n'),(\epsilon_1',\dots,\epsilon_{n}')\big)$, we set
\[\gamma_1\cdot_{\epsilon}\gamma_2= \big((a_0,\dots, a_m,a_0',\dots,a_n'),(\epsilon_1,\dots,\epsilon_{m},\epsilon,\epsilon_1',\dots,\epsilon_{n}')\big).\]
Moreover, it is clear that $d(\gamma_1\cdot_{\epsilon}\gamma_2)=d(\gamma_1)\cdot d(\gamma_2)$ and that $|\gamma_1\cdot_\epsilon\gamma_2|=|\gamma_1|+|\gamma_2|+1$.

From here, we form the generating function for walks with a given terminal segment. To do this, we fix a walk $\eta$ of length $r\geq0$ on $\mathcal{D}$ and say that
$\eta$ is a \emph{terminal segment} of a walk $\gamma$ if the last $r+1$ vertices and $r$ edges of $\gamma$ are those of $\eta$; equivalently, either $\gamma=\eta$ or we may write $\gamma=\omega \cdot_\epsilon \eta$ for some walk $\omega$ and some edge $\epsilon=\tau(\omega)\rightarrow\iota(\eta)$. Then put
\[\mathcal{W}_\eta=\big\{\gamma\,:\,\text{$\eta$ is a terminal segment of $\gamma$}\big\}\]
to be the set of walks that end with the walk $\eta$. Likewise for $k>0$, set
\[a_k=a_k(d,\eta):=\#\{\gamma\in\mathcal{W}_\eta\,:\,d(\gamma)=k\}\;\;\;\;\;\text{and}\;\;\;\;\; Z_\eta(s)=\sum_{\gamma\in\mathcal{W}_\eta}d(\gamma)^{-s}.
\]
Finally, we define the \emph{weighted transfer matrix}: for $s\in\mathbb{C}$ let $M(s)\in\textup{Mat}_{n\times n}(\mathbb{C})$ and the row vector $\delta(s)$ be given by
\begin{equation}\label{eq:M+delta}
    M_{ij}(s)=A_{ij}d(v_j)^{-s}\;\;\;\text{and}\;\;\; \delta(s)=\big(d(v_1)^{-s},\dots, d(v_n)^{-s}\big).
\end{equation}
In particular, we note the following fact about the weighted transfer matrix, which generalizes the standard fact that the entry $(A^{m})_{ij}$ gives the number of walks of length $m$ that start at $v_i$ and end at $v_j$; see, for instance, \cite[Theorem 4.7.1]{StanleyEC1}.
\begin{lem}\label{lem:transfer+matrix} For all $m\geq0$ and $1\leq j\leq n$, we have that
\[\sum_{\substack{|\gamma|=m\\ \tau(\gamma)=v_j}}d(\gamma)^{-s}=\Big(\delta(s)M(s)^{m}\Big)_j.\]
\end{lem}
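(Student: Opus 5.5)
We argue by induction on $m$, which is the natural approach because the matrix power $M(s)^m$ grows by one factor exactly when a walk grows by one edge. For $m=0$ a walk is a single vertex, so the left side is $d(v_j)^{-s}$. The right side is $\delta(s)_j=d(v_j)^{-s}$, and the two agree.

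For the inductive step, assume the identity holds for $m$ and every terminal vertex. Fix $j$ and consider the map sending a walk $\gamma$ of length $m+1$ with $\tau(\gamma)=v_j$ to the triple $(\gamma',\epsilon,v_j)$. Here $\gamma'$ is $\gamma$ with its last vertex and last edge removed, so $\gamma'$ has length $m$ and $\tau(\gamma')=v_i$ for some $i$. The edge $\epsilon$ is the final one, $\epsilon=v_i\rightarrow v_j$.

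This map is a bijection between walks of length $m+1$ ending at $v_j$ and the set of pairs $(\gamma',\epsilon)$, where $\gamma'$ is a walk of length $m$ ending at some $v_i$ and $\epsilon\in\mathcal{E}$ is an edge with $\epsilon=v_i\rightarrow v_j$. It is a bijection because appending a vertex along an edge inverts it. This is simply the concatenation $\gamma'\cdot_\epsilon ((v_j),\varnothing)$ from the paper.

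Since $d$ is multiplicative on concatenations, $d(\gamma)=d(\gamma')\,d(v_j)$. Summing over the bijection gives
\[\sum_{\substack{|\gamma|=m+1\\ \tau(\gamma)=v_j}}d(\gamma)^{-s}=\sum_{i=1}^n\ \sum_{\substack{|\gamma'|=m\\ \tau(\gamma')=v_i}}d(\gamma')^{-s}\,A_{ij}\,d(v_j)^{-s}.\]
The factor $A_{ij}$ counts the choices of $\epsilon$. By the induction hypothesis and the definition $M_{ij}(s)=A_{ij}d(v_j)^{-s}$, the right side equals
\[\sum_i\big(\delta(s)M(s)^m\big)_iM_{ij}(s)=\big(\delta(s)M(s)^{m+1}\big)_j.\]
This completes the induction.

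There is no real obstacle here. The only point needing care is the bookkeeping that makes the map a genuine bijection in a multigraph. Walks record their edges, so parallel edges produce distinct walks, and these are counted exactly by $A_{ij}$. The identity holds for every $s\in\mathbb{C}$, since every sum involved is finite.
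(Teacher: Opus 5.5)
Your proof is correct and follows the paper's argument essentially verbatim. Both use induction on $m$, via the bijection between walks of length $m+1$ ending at $v_j$ and pairs consisting of a walk of length $m$ ending at some $v_i$ and an edge $v_i\rightarrow v_j$, together with $d(\gamma)=d(\gamma')\,d(v_j)$ and $M_{ij}(s)=A_{ij}d(v_j)^{-s}$; your added remark that all sums are finite, so the identity holds for every $s\in\mathbb{C}$, is a small but welcome clarification.
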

\begin{proof} Suppose $m=0$. Then the only walk of length $0$ ending at $v_j$ is the single vertex $v_j$. Hence, the claim in the lemma is simply that $d(v_j)^{-s}=(\delta(s)M(s)^{0})_j=\delta(s)_j=d(v_j)^{-s}$ for all $j$, which is trivially true.

From here we proceed by induction on $m$. Suppose that $m\geq1$. Then every walk of length $m$ ending at $v_j$ can be uniquely written as the concatenation of a walk of length $m-1$, ending at some $v_i$, followed by one of the $A_{ij}$ edges $v_i\rightarrow v_j$. That is, we obtain a bijection of sets:
\[\big\{\gamma: |\gamma|=m\;\text{and}\; \tau(\gamma)=v_j\big\}\longleftrightarrow\bigsqcup_{i=1}^{n} \big\{\omega: |\omega|=m-1\;\text{and}\; \tau(\omega)=v_i\big\}\times\big\{\epsilon\in\mathcal{E}:\epsilon=v_i\rightarrow v_j\big\}.\]
Moreover, under this bijection we have that $d(\gamma)^{-s}=d(\omega)^{-s}d(v_j)^{-s}$, and $A_{ij}=\#\{\epsilon\in\mathcal{E}:\epsilon=v_i\rightarrow v_j\}$ by definition. Hence,
\begin{equation*}
\sum_{\substack{|\gamma|=m\\ \tau(\gamma)=v_j}}d(\gamma)^{-s}
=\sum_{i=1}^n \bigg(\sum_{\substack{|\omega|=m-1\\ \tau(\omega)=v_i}}d(\omega)^{-s}\bigg)A_{ij}d(v_j)^{-s}=\sum_{i=1}^n\Big(\delta(s)M(s)^{m-1}\Big)_{\hspace{-.05cm}i}\,M_{ij}(s)
\end{equation*}
by the induction hypothesis. However, the right-hand side is $\big(\delta(s)M(s)^{m}\big)_j$ as desired.
\end{proof}
Next, we make a few simple observations about the convergence of the Dirichlet series generating functions defined above.
\begin{lem}\label{lem:basic+convergence} Let $d_{\min}=\min_i\{d(v_i)\}>1$ and $\|A\|_{\infty}:=\max_i\{\sum_{j=1}^n A_{ij}\}$ denote the minimum weight and maximum out-degree of vertices of $\mathcal{D}$, respectively. Then each $a_k$ is finite and
\[\sum_{\textup{walks $\gamma$}}|d(\gamma)^{-s}|<\infty\]
for all $\Re(s)>\sigma_0:=\log(\|A\|_{\infty})/\log(d_{\min})$. Here, the sum is over all walks of $\mathcal{D}$. In particular, $Z_\eta(s)$ converges absolutely for all $\Re(s)>\sigma_0$ and all walks $\eta$.
\end{lem}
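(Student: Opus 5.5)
The plan is to group walks by length and bound each length class using Lemma \ref{lem:transfer+matrix}, evaluated at the real number $\sigma=\Re(s)$. Since $|d(\gamma)^{-s}|=d(\gamma)^{-\sigma}$, it suffices to treat real $s=\sigma>\sigma_0$. All entries of $M(\sigma)$ and $\delta(\sigma)$ are then nonnegative. Summing Lemma \ref{lem:transfer+matrix} over $j$ gives
\[\sum_{|\gamma|=m}d(\gamma)^{-\sigma}=\delta(\sigma)M(\sigma)^m\mathbf{1},\]
where $\mathbf{1}$ is the all-ones column vector.

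Next I would bound the right-hand side by the row-sum norm. Each row sum of $M(\sigma)$ is
\[\sum_j A_{ij}d(v_j)^{-\sigma}\leq \|A\|_\infty d_{\min}^{-\sigma}=:q(\sigma).\]
Submultiplicativity of $\|\cdot\|_\infty$ then gives $M(\sigma)^m\mathbf{1}\leq q(\sigma)^m\mathbf{1}$ entrywise. Hence
\[\sum_{|\gamma|=m}d(\gamma)^{-\sigma}\leq n\,d_{\min}^{-\sigma}q(\sigma)^m.\]
The hypothesis $\sigma>\sigma_0$ is exactly the condition $q(\sigma)<1$, since it says $\|A\|_\infty<d_{\min}^{\sigma}$. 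Summing the geometric series over $m\geq0$ gives a finite total. One edge case: if $\|A\|_\infty=0$ then $\sigma_0=-\infty$, but then only length-$0$ walks exist and the sum is finite trivially. Similarly $\|A\|_\infty=1$ gives $\sigma_0=0$, which is consistent with the argument.

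Finiteness of $a_k$ follows because $d(\gamma)\geq d_{\min}^{|\gamma|+1}$. So $d(\gamma)=k$ forces $|\gamma|\leq \log k/\log d_{\min}$, and there are only finitely many walks of bounded length in a finite multigraph. Alternatively, convergence at any one real $\sigma$ already bounds $a_k k^{-\sigma}$.

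Finally, $\mathcal{W}_\eta$ is a subset of all walks, so $Z_\eta(s)$ is dominated termwise by the full sum and converges absolutely for $\Re(s)>\sigma_0$. I do not expect a real obstacle; the only care needed is the entrywise nonnegativity that justifies the norm bound, and the degenerate cases $\|A\|_\infty\in\{0,1\}$.
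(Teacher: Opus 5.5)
Your proof is correct and follows the paper's argument. Both proofs group walks by length, bound the length-$m$ contribution by $n\,d_{\min}^{-\sigma}\bigl(\|A\|_\infty d_{\min}^{-\sigma}\bigr)^m$, sum the geometric series, and get finiteness of $a_k$ from $d(\gamma)\geq d_{\min}^{|\gamma|+1}$. The only difference is cosmetic: you obtain the per-length bound from Lemma~\ref{lem:transfer+matrix} and the row sums of $M(\sigma)$, while the paper counts at most $n\|A\|_\infty^m$ walks of length $m$, each of weight at least $d_{\min}^{m+1}$. Your separate treatment of the degenerate case $\|A\|_\infty=0$ is a point the paper leaves implicit.
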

\begin{proof} A walk $\gamma$ of length $m$ has $m+1$ vertices so satisfies $d(\gamma)\geq d_{\min}^{m+1}$. Thus $d(\gamma)\leq x$ forces $m\leq \log(x)/\log(d_{\min})-1$. Moreover, the number of walks of length $m$ is at most $n (\|A\|_{\infty})^{m}$: from each starting vertex there are at most $\|A\|_{\infty}$ choices at each of the $m$ steps. In particular, there are only finitely many walks of any given weight, so $a_k$ is finite. Likewise, for $\Re(s)=\sigma$ we have that
\[\sum_{\textup{walks $\gamma$}}|d(\gamma)^{-s}|=\sum_{m\geq0}\sum_{|\gamma|=m}d(\gamma)^{-\sigma}\leq \sum_{m\geq0} n (\|A\|_{\infty})^{m}d_{\min}^{-(m+1)\sigma}=(n\,d_{\min}^{-\sigma})\sum_{m\geq 0}\big(\|A\|_{\infty}\,d_{\min}^{-\sigma}\big)^m,\]
which converges whenever $\|A\|_{\infty}\,d_{\min}^{-\sigma}<1$, i.e., when $\sigma>\sigma_0$ as claimed.
\end{proof}
In particular, we put the results above together to establish a meromorphic continuation for the Dirichlet series generating functions for walks with a given terminal segment.
\begin{thm}\label{thm:walk+with+terminal+segment} Let $\mathcal{D}=(\mathcal{V},\mathcal{E},\mu)$ be a finite directed multigraph, let $d:\mathcal{V}\rightarrow \mathbb{Z}_{\geq2}$, and extend $d$ multiplicatively to walks on $\mathcal{D}$. Furthermore, let $M(s)$ and $\delta(s)$ be as in \eqref{eq:M+delta}, let $\sigma_0$ be as in Lemma \ref{lem:basic+convergence}, and let $\eta$ be a fixed walk in $\mathcal{D}$ with $\iota(\eta)=v_q\in\mathcal{V}$. Then for all $\Re(s)>\sigma_0$, the matrix $I-M(s)$ is invertible and
\[Z_\eta(s)=\sum_{k\geq1}\frac{a_k}{k^s}=d(\eta)^{-s}\Big(1+\delta(s)\big(I-M(s)\big)^{-1}\nu(\eta)\Big),\]
where $\nu(\eta)^{\textup{T}}=(A_{1q},\dots,A_{nq})$. In particular, by Cramer's rule we see that
\[Z_{\eta}(s)=d(\eta)^{-s}\Big(1+\frac{\delta(s)\,\textup{adj}\big(I-M(s)\big)\,\nu(\eta)}{\det\big(I-M(s)\big)}\Big)\]
is a rational function in $d(v_1)^{-s},\dots,d(v_n)^{-s}$. Hence, $Z_\eta$ extends to a meromorphic function on $\mathbb{C}$ with poles inside the set $\big\{s\in\mathbb{C}\,: \det\big(I-M(s)\big)=0\big\}$.
\end{thm}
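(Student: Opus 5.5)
The plan is to decompose $\mathcal{W}_\eta$ according to the part of a walk that precedes $\eta$, and then to sum over lengths using Lemma \ref{lem:transfer+matrix}.

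By definition, every $\gamma\in\mathcal{W}_\eta$ is either $\eta$ itself or uniquely of the form $\omega\cdot_\epsilon\eta$. Here $\omega$ is an arbitrary walk and $\epsilon$ is an edge $\tau(\omega)\rightarrow v_q$. Uniqueness holds because the length of $\omega$ is forced to be $|\gamma|-|\eta|-1$, so $\omega$ and $\epsilon$ are read off from $\gamma$. This gives a bijection
\[\mathcal{W}_\eta\setminus\{\eta\}\;\longleftrightarrow\;\bigsqcup_{m\geq0}\bigsqcup_{i=1}^n\{\omega:\ |\omega|=m,\ \tau(\omega)=v_i\}\times\{\epsilon:\epsilon=v_i\rightarrow v_q\},\]
and $d(\omega\cdot_\epsilon\eta)=d(\omega)d(\eta)$. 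For $\Re(s)>\sigma_0$ the series converges absolutely by Lemma \ref{lem:basic+convergence}, so we may regroup freely and write
\[Z_\eta(s)=d(\eta)^{-s}\Big(1+\sum_{m\geq0}\sum_{i=1}^n\Big(\sum_{|\omega|=m,\ \tau(\omega)=v_i}d(\omega)^{-s}\Big)A_{iq}\Big)=d(\eta)^{-s}\Big(1+\sum_{m\geq0}\delta(s)M(s)^m\nu(\eta)\Big),\]
where the last equality uses Lemma \ref{lem:transfer+matrix}. The identity $Z_\eta=\sum_k a_k k^{-s}$ is then just regrouping by weight, since each $a_k$ is finite.

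The main step is to show that $\sum_m M(s)^m$ converges to $(I-M(s))^{-1}$ for $\Re(s)>\sigma>\sigma_0$. We have $|M_{ij}(s)|\leq A_{ij}d_{\min}^{-\sigma}$, so the max-row-sum norm satisfies $\|M(s)\|_\infty\leq \|A\|_\infty d_{\min}^{-\sigma}<1$. Hence the Neumann series converges absolutely, $I-M(s)$ is invertible, and the series equals its inverse. Substituting this gives the stated formula.

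Cramer's rule gives $(I-M)^{-1}=\textup{adj}(I-M)/\det(I-M)$. Every entry of $\delta$, $\textup{adj}(I-M)$ and $\det(I-M)$ is a polynomial in the quantities $d(v_j)^{-s}$, so $Z_\eta$ is a rational function of them. Each $d(v_j)^{-s}=e^{-s\log d(v_j)}$ is entire, and $\det(I-M(s))$ is not identically zero because it is nonzero on $\Re(s)>\sigma_0$. The right-hand side is therefore a meromorphic function on $\mathbb{C}$ that agrees with $Z_\eta$ on the half-plane. It provides the continuation, with poles only at zeros of $\det(I-M(s))$.

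None of these steps is hard. The only points needing care are the absolute-convergence justification for the rearrangement, and checking that the Neumann bound holds with the $\|\cdot\|_\infty$ norm used in Lemma \ref{lem:basic+convergence}.
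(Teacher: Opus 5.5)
Your proposal is correct and follows essentially the same route as the paper. Both arguments get the Neumann series from $\|M(s)\|_\infty\le\|A\|_\infty d_{\min}^{-\sigma}<1$, use the bijection $\mathcal{W}_\eta\setminus\{\eta\}\leftrightarrow\{(\omega,\epsilon)\}$ that peels off the prefix before $\eta$, and sum Lemma \ref{lem:transfer+matrix} over lengths, with rearrangements justified by absolute convergence; your remark that $\det(I-M(s))\not\equiv 0$ because it is nonzero on $\Re(s)>\sigma_0$ fills in the meromorphic-continuation step that the paper leaves as ``the rest of the claim follows.''
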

\begin{proof}
 For $\Re(s)=\sigma>\sigma_0$ the entry bound $|M_{ij}(s)|\leq A_{ij}d_{\min}^{-\sigma}$ gives
 \[\|M(s)\|_{\infty}\leq \|A\|_{\infty}\,d_{\min}^{-\sigma}<1,\]
 by assumption on $\sigma$. Moreover, $\|\cdot\|_{\infty}$ is submultiplicative, so that $\|M(s)^k\|_{\infty}\leq \|M(s)\|_{\infty}^k$ and $\sum_{k\geq0}M(s)^k$ converges by direct comparison with a geometric series. In particular, the standard telescoping argument with geometric series identifies the matrix $\sum_{k\geq0}M(s)^k$ as the inverse of $I-M(s)$.

Similar to the proof of Lemma \ref{lem:transfer+matrix}, a walk $\gamma\in \mathcal{W}_\eta\setminus\{\eta\}$ can be uniquely written $\gamma=\omega\cdot_{\epsilon}\eta$ as a concatenation of a walk $\omega$ of length $|\gamma|-|\eta|-1$ with an edge $\epsilon=\tau(\omega)\rightarrow \iota(\eta)$. That is, the association $\gamma\mapsto(\omega,\epsilon)$ between the sets
\[\mathcal{W}_\eta\setminus\{\eta\} \longleftrightarrow\bigsqcup_{j=1}^{n} \big\{\omega:\; \tau(\omega)=v_j\big\}\times\big\{\epsilon\in\mathcal{E}:\epsilon=v_j\rightarrow v_q\big\}\]
is a bijection. Summing $d(\gamma)^{-s}=d(\eta)^{-s}d(\omega)^{-s}$ over this bijection gives
\begin{equation}\label{eq:Z1}
Z_\eta(s)=d(\eta)^{-s}\Big(1+\sum_{j=1}^n\Big(\sum_{\tau(\omega)=v_j}d(\omega)^{-s}\Big)A_{jq}\Big).
\end{equation}
On the other hand, Lemma \ref{lem:transfer+matrix} implies that
\[\sum_{\tau(\omega)=v_j}d(\omega)^{-s}=\sum_{m\geq0} \sum_{\substack{|\omega|=m\\ \tau(\omega)=v_j}}d(\omega)^{-s}
=\sum_{m\geq 0}\Big(\delta(s)M(s)^{m}\Big)_j.
\]
In particular, substituting this expression into \eqref{eq:Z1} we deduce that
\begin{equation}\label{eq:Z+eta+expression}
\begin{split}
Z_\eta(s)&=d(\eta)^{-s}\bigg(1+\sum_{j=1}^n
\sum_{m\geq 0}\Big(\delta(s)M(s)^{m}\Big)_jA_{jq}\bigg)\\[5pt]
&=d(\eta)^{-s}\Big(1+\sum_{m\geq0}
\delta(s)M(s)^{m}\nu(\eta)\Big)\\[5pt]
&=d(\eta)^{-s}\Big(1+\delta(s)\Big(\sum_{m\geq0}
M(s)^{m}\Big)\nu(\eta)\Big)\\[5pt]
&=d(\eta)^{-s}\Big(1+\delta(s)\big(I-M(s)\big)^{-1}\nu(\eta)\Big)
\end{split}
\end{equation}
as desired. Here, our reordering of terms is justified since the sums above converge absolutely. The rest of the claim follows.
\end{proof}
\subsection{Poles and Asymptotics}
Next, in order to obtain asymptotics from generating functions, we study the poles of $Z_\eta$. In particular, we make use of the Perron--Frobenius theorem for primitive matrices; recall that a square matrix $A$ with nonnegative entries is \emph{primitive} if there exists $k>0$ such that $A^k$ has strictly positive entries (written $A^k>0$ for short). Throughout, $\textup{sr}(A)$ denotes the spectral radius of a square matrix $A$.
\begin{thm}[Perron--Frobenius \cite{seneta1981non}]
\label{thm:P-F}
Let $A$ be a nonnegative, primitive matrix. Then there exists a real number $\lambda>0$ \textup{(}namely, $\lambda=\textup{sr}(A)$\textup{)} such that:
\begin{enumerate}
\item[\textup{(1)}] $\lambda$ is an eigenvalue of $A$. \vspace{.15cm}
\item[\textup{(2)}] If $\alpha\neq\lambda $ is an eigenvalue of $A$, then $|\alpha|<\lambda$.
\vspace{.15cm}
\item[\textup{(3)}] $\lambda$ is a simple root of the characteristic polynomial of $A$.
\vspace{.15cm}
\item[\textup{(4)}] If $B$ is a square matrix such that $0\leq B_{ij}\leq A_{ij}$ for all corresponding entries of $A$ and $B$ and if $\beta$ is an eigenvalue of $B$, then $|\beta|\leq\lambda$. Moreover, if $|\beta|=\lambda$ for some eigenvalue $\beta$ of $B$, then $A=B$.
\vspace{.15cm}
\item[\textup{(5)}] There exist right (resp., left) eigenvectors $\textbf{v}$ and $\textbf{w}$ of $A$ with eigenvalue $\lambda$ such that all of the entries of $\textbf{v}$ and $\textbf{w}$ are strictly positive.
\end{enumerate}
\end{thm}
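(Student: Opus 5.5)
Since Theorem \ref{thm:P-F} is classical and is cited from \cite{seneta1981non}, the plan is to run the standard Collatz--Wielandt argument, using primitivity to pass to the strictly positive matrix $A^k$. For nonnegative $x\neq0$ set
\[
r(x)=\min_{x_i>0}\frac{(Ax)_i}{x_i},\qquad \lambda:=\sup_{x\geq0,\ x\neq0} r(x).
\]
Since $r$ is scale invariant, I restrict to the simplex. I replace $x$ by $(I+A)^{k}x$, which is strictly positive and does not decrease $r$. This reduces the supremum to a compact set on which $r$ is continuous, so it is attained at some $v$.

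\emph{Step 1: $v$ is a positive eigenvector.} If $Av-\lambda v\geq0$ were nonzero, then applying $A^k>0$ would give $A^{k}(Av-\lambda v)>0$. Then $u=A^kv$ would satisfy $r(u)>\lambda$, a contradiction. So $Av=\lambda v$, and $v=\lambda^{-k}A^kv>0$. Applying the same argument to $A^{\mathrm T}$, which is also primitive, gives a positive left eigenvector $w$. Pairing with $v$ shows both have the same eigenvalue, namely $\lambda$. This gives (1) and (5), and $\lambda>0$ because $A^k>0$.

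\emph{Step 2: the modulus bounds (4), then (2).} I prove (4) first, since (2) is the case $B=A$. Let $By=\beta y$ with $y\neq0$. Then $|\beta|\,|y|\leq B|y|\leq A|y|$ entrywise. Pairing with $w$ gives $|\beta|\,w\cdot|y|\leq \lambda\, w\cdot|y|$, so $|\beta|\leq\lambda$. If $|\beta|=\lambda$, the pairing forces $A|y|=\lambda|y|$, because $w>0$ and $A|y|-\lambda|y|\geq0$. Then $|y|=\lambda^{-k}A^k|y|>0$. Equality in the chain also forces $(A-B)|y|=0$, and with $|y|>0$ and $A-B\geq0$ this gives $A=B$.

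For (2), take $B=A$ and an eigenvalue $\alpha$ with $|\alpha|=\lambda$ and eigenvector $x$. We now have $A|x|=\lambda|x|$ and $|x|>0$, so $|A^kx|=\lambda^k|x|=A^k|x|$. Equality in the triangle inequality for the strictly positive rows of $A^k$ forces all coordinates of $x$ to share a common phase. Thus $x=e^{i\theta}|x|$, and hence $\alpha=\lambda$.

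\emph{Step 3: simplicity (3).} This is the step I expect to need the most care. For geometric simplicity, take a real eigenvector $y$ for $\lambda$ that is not a multiple of $v$. Let $t$ be the largest value with $v-ty\geq0$. Then $v-ty\geq 0$ is a $\lambda$-eigenvector with a zero entry, and it is nonzero. But it must equal $\lambda^{-k}A^k(v-ty)>0$, a contradiction. For algebraic simplicity, I rule out a Jordan block. If $(A-\lambda I)z=v$, then multiplying on the left by $w^{\mathrm T}$ gives $0=w\cdot v>0$, which is impossible. Hence $\lambda$ is a simple root of the characteristic polynomial, completing the proof.
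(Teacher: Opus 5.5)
The paper does not prove this theorem. It imports it from Seneta as a black box and uses it as stated, for instance part (4) with $B=I$ in Lemma~\ref{lem:transfer+matrix+primitive}, and for the strict monotonicity of $s\mapsto\textup{sr}(M(s))$ in Theorem~\ref{thm:Z_eta+analytic+properties}. Your Collatz--Wielandt argument is a correct, self-contained proof of all five parts, so it buys self-containment where the paper relies on citation. Your Step 2 is the same mechanism the paper itself runs in Lemma~\ref{lem:critical+line+poles}: pair the nonnegative vector $A|y|-\lambda|y|$ with a positive left eigenvector to force equality, then use $A^k>0$ and equality in the triangle inequality. So that lemma becomes visibly a complex-matrix variant of your proof of (2) and (4). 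Your treatment of algebraic simplicity in (3), via $w^{\mathrm T}(A-\lambda I)z=0\neq w^{\mathrm T}v$, is also shorter than a common textbook route. That route writes $\frac{d}{dt}\det(tI-A)$ at $t=\lambda$ as a sum of principal minors of $\lambda I-A$ and uses (4) to see that each is positive.

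A few small points should be made explicit.
\begin{enumerate}
\item You define $\lambda$ as $\sup r(x)$, so the parenthetical claim $\lambda=\textup{sr}(A)$ needs the one-line remark that it follows from (1) together with (4) for $B=A$.
\item In the geometric-simplicity step, replace $y$ by $-y$ if necessary so that $y$ has a positive entry. Otherwise there may be no largest $t$ with $v-ty\geq0$. A real eigenvector independent of $v$ exists because $A-\lambda I$ is a real matrix, so its kernel has a real basis.
\item In ruling out a Jordan block, you are using geometric simplicity to rescale the bottom vector of a length-two chain to $v$. This is fine, but say so.
\end{enumerate}
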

In particular, under certain conditions (which will hold for the clique graphs we study in this paper) the transfer matrix of a directed graph is primitive; see \cite[Theorem 8.5.3]{horn2012matrix}. In what follows, we say that $\mathcal{D}$ is \emph{strongly connected} if for every pair of vertices $v_i,v_j\in \mathcal{V}$ there exists a walk $\gamma$ on $\mathcal{D}$ such that $\iota(\gamma)=v_i$ and $\tau(\gamma)=v_j$.
\begin{lem}\label{lem:transfer+matrix+primitive} Let $\mathcal{D}$ be a directed multigraph with $n\geq2$ vertices. Moreover, assume that $\mathcal{D}$ is strongly connected and that there is an edge $v\rightarrow v$ for all vertices $v$ of $\mathcal{D}$. Then $A^{n-1}>0$, where $A$ is the transfer matrix of $\mathcal{D}$. Furthermore, the spectral radius $\textup{sr}(A)>1$.
\end{lem}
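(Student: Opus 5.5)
We first show $A^{n-1}>0$ combinatorially, and then get $\textup{sr}(A)>1$ by comparing $A$ with $I+B$, where $B$ is a nonnegative irreducible matrix.

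\textbf{Step 1: $A^{n-1}>0$.} Recall that $(A^{m})_{ij}$ counts walks of length $m$ from $v_i$ to $v_j$ (Lemma \ref{lem:transfer+matrix} with $s=0$). Since $\mathcal{D}$ is strongly connected, for any $i,j$ there is a walk from $v_i$ to $v_j$. Choose one of minimal length. Its vertices are then distinct: a repeated vertex would give a shortcut. So it has length $\ell\leq n-1$. If $i=j$, use the length-$0$ walk $((v_i),\varnothing)$, so $\ell=0$. Now pad the walk by traversing the loop $v_j\rightarrow v_j$ exactly $n-1-\ell$ times at the end. This gives a walk of length exactly $n-1$ from $v_i$ to $v_j$. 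Hence $(A^{n-1})_{ij}\geq1$ for all $i,j$.

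Equivalently, in matrix terms: $A\geq I+B$ entrywise, where $B$ is the adjacency $0/1$ matrix of the underlying simple digraph with loops removed. Then $(I+B)^{n-1}=\sum_k\binom{n-1}{k}B^k$ has positive $(i,j)$ entry whenever some $B^k_{ij}>0$ with $k\leq n-1$, which strong connectivity guarantees.

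\textbf{Step 2: $\textup{sr}(A)>1$.} By Step 1, $A$ is primitive, so Theorem \ref{thm:P-F} applies with $\lambda=\textup{sr}(A)$. Take the strictly positive right eigenvector $\textbf{v}$ from part (5), so $A\textbf{v}=\lambda\textbf{v}$. Pick the index $i$ at which $\textbf{v}_i$ is minimal. Because $n\geq2$ and $\mathcal{D}$ is strongly connected, $v_i$ has an out-edge to some $v_j$ with $j\neq i$, so $A_{ij}\geq1$; we also have $A_{ii}\geq1$ from the loop. Therefore
\[\lambda \textbf{v}_i=\sum_k A_{ik}\textbf{v}_k\geq \textbf{v}_i+\textbf{v}_j\geq 2\textbf{v}_i,\]
so $\lambda\geq2>1$.

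A cleaner alternative uses part (4) of Theorem \ref{thm:P-F}. Set $C=I+E_{ij}+E_{ji}$, restricted to entries where $A$ has edges in both directions. This is awkward because the reverse edge may not be present, so I would use the eigenvector argument above.

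\textbf{Expected obstacle.} The only delicate point is Step 1. One must make sure the minimal-length walk has length at most $n-1$, and handle $i=j$ with the length-$0$ walk so that the padding by loops works uniformly. Everything else follows directly from Theorem \ref{thm:P-F}.
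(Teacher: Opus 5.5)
Your proof is correct. Step 1 is the paper's argument; the paper prepends the loop at $v_i$ instead of appending the loop at $v_j$, which makes no difference.

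One small citation slip in Step 1. Lemma \ref{lem:transfer+matrix} at $s=0$ only gives the column sums $\sum_i (A^m)_{ij}$, since $\delta(0)=(1,\dots,1)$. The entrywise count of walks from $v_i$ to $v_j$ is the standard fact the paper cites from Stanley. It also follows from the same induction with $\delta(s)$ replaced by the $i$th standard basis row vector.

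Step 2 takes a genuinely different route.

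The paper uses the comparison clause of Theorem \ref{thm:P-F} part (4) with the simplest comparison matrix, $B=I$. Since every vertex has a loop, $0\le I\le A$ entrywise, so the eigenvalue $1$ of $I$ satisfies $1\le\textup{sr}(A)$. Equality would force $A=I$, which is impossible because strong connectivity with $n\ge2$ gives a nonzero off-diagonal entry. So the ``cleaner alternative'' you abandoned works without any $E_{ij}+E_{ji}$ terms: take $I$ itself.

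Your argument instead uses only the positive Perron eigenvector from part (5) and a minimal-coordinate estimate. This is essentially the proof of the classical bound that the spectral radius of a nonnegative matrix is at least its minimum row sum. It gives the sharper conclusion $\textup{sr}(A)\ge 2$, which is attained, for example, by a directed cycle with a loop at each vertex.

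The paper's version is a one-liner given the strictness statement in part (4). Yours draws less from Perron--Frobenius and is quantitative. The extra strength is not needed later: the paper only uses $\textup{sr}(A)>1$, to get $\lambda(0)>1$ in the proof of Theorem \ref{thm:Z_eta+analytic+properties}.
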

\begin{proof} Fix $1\leq i,j\leq n$ and let $\gamma$ be a walk of minimum length $\ell\geq0$ with $\iota(\gamma)=v_i$ and $\tau(\gamma)=v_j$. Note that $\ell\leq n-1$ since otherwise $\gamma$ revisits some vertex by the pigeonhole principle, violating the assumption that $\gamma$ has minimal length. Let $\epsilon_i=v_i\rightarrow v_i$. Then prepending $n-1-\ell\geq0$ copies of $\epsilon_i$ to $\gamma$ yields a walk of length $n-1$ from $v_i$ to $v_j$. But $(A^{n-1})_{ij}$ is the number of walks of length $n-1$ from $v_i$ to $v_j$ by \cite[Theorem 4.7.1]{StanleyEC1}. Thus, $(A^{n-1})_{ij}\geq1$ for all $i,j$ as claimed.

Finally, note that $B=I$ satisfies $0\leq B_{ij}\leq A_{ij}$ for all $i,j$ since every vertex of $\mathcal{D}$ has a loop. Moreover, $I\neq A$, since $A$ has nonzero off-diagonal entries (otherwise, $\mathcal{D}$ is not strongly connected). Hence, Theorem \ref{thm:P-F} part (4) implies that $1<\textup{sr}(A)$.
\end{proof}
Next, to analyze the poles of $Z_\eta$, we need the following facts from linear algebra.
\begin{lem}\label{lem:matrix+divergence} Let $M$ be an $n\times n$ nonnegative, primitive matrix and assume that $\textup{sr}(M)\geq1$. Moreover, let $\textbf{v},\textbf{w}\in\mathbb{R}^n$ be nonzero vectors such that $\textbf{v}$ has strictly positive entries and $\textbf{w}$ has nonnegative entries. Then the sequence
\[a_N(M,\textbf{v},\textbf{w}):=\textbf{v}^{\textup{T}}\Big(\sum_{m=0}^N M^m\Big)\textbf{w}\]
satisfies $\displaystyle{\lim_{N\rightarrow\infty}a_N(M,\textbf{v},\textbf{w})=\infty}$.
\end{lem}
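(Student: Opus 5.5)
The plan is to use primitivity to make $M^m$ eventually strictly positive, and then let the eigenvalue $\lambda=\textup{sr}(M)\geq1$ force $\textbf{v}^{\textup{T}}M^m\textbf{w}$ to stay bounded below by a fixed positive constant along an arithmetic progression of $m$. Summing over $m$ will then make the partial sums diverge. Since all entries of $M$, $\textbf{v}$ and $\textbf{w}$ are nonnegative, every term $\textbf{v}^{\textup{T}}M^m\textbf{w}$ is nonnegative. Hence $a_N$ is nondecreasing in $N$, and it suffices to show that infinitely many terms are at least some fixed $c>0$.

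\emph{Step 1 (positivity of $M^m\textbf{w}$).} Fix $k$ with $M^k>0$. Since $\textbf{w}\geq0$ is nonzero, the vector $M^k\textbf{w}$ has strictly positive entries. Set $\textbf{u}:=M^k\textbf{w}>0$.

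\emph{Step 2 (a lower bound using the Perron eigenvector).} By Theorem \ref{thm:P-F}(5), choose a right eigenvector $\textbf{x}>0$ with $M\textbf{x}=\lambda\textbf{x}$. Because $\textbf{u}>0$ entrywise and $\textbf{x}$ has finitely many entries, there is $c_0>0$ with $\textbf{u}\geq c_0\textbf{x}$ entrywise. Nonnegativity of $M$ preserves entrywise inequalities, so for all $j\geq0$
\[M^{k+j}\textbf{w}=M^j\textbf{u}\geq c_0M^j\textbf{x}=c_0\lambda^j\textbf{x}\geq c_0\textbf{x}.\]
The last inequality uses $\lambda\geq1$.

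\emph{Step 3 (conclude).} Pairing with $\textbf{v}>0$ gives $\textbf{v}^{\textup{T}}M^{k+j}\textbf{w}\geq c_0\,\textbf{v}^{\textup{T}}\textbf{x}=:c>0$ for every $j\geq0$. Therefore $a_N\geq (N-k+1)c\to\infty$.

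The only step needing care is Step 2: the comparison $\textbf{u}\geq c_0\textbf{x}$ requires both vectors to be strictly positive. This is exactly why primitivity is used to get from $\textbf{w}$ to $\textbf{u}=M^k\textbf{w}$ first. The bound on $\lambda$ only enters through $\lambda^j\geq1$. Positivity of $\textbf{v}$ is not strictly needed, since $\textbf{v}\neq0$ and $\textbf{v}\geq0$ would suffice; the hypotheses as stated are more than enough.
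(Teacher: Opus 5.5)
Your proof is correct and is essentially the paper's argument: compare a strictly positive vector obtained from $\mathbf{w}$ with the positive Perron eigenvector, use $\lambda\geq 1$ to get a uniform positive lower bound on $\mathbf{v}^{\textup{T}}M^m\mathbf{w}$, and sum. The only difference is that you apply $M^k$ to $\mathbf{w}$ once and use the Perron vector of $M$ itself, so your bound holds for every $m\geq k$; the paper instead treats $M>0$ first and then passes to $M^k$ along the subsequence $a_{kN}$ using monotonicity.
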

\begin{proof} We first assume that $M>0$ has strictly positive entries. Let $\lambda=\textup{sr}(M)\geq1$ and let $\textbf{x}$ be an eigenvector of $M$ with eigenvalue $\lambda$ and strictly positive entries, guaranteed to exist by statement (5) of Theorem \ref{thm:P-F}. Now choose an entry $w_k$ of $\textbf{w}$ that is strictly positive. Then, for each $j$ we have that
\[(M\textbf{w})_j=\sum_{i=1}^nM_{ji}w_i\geq M_{jk}w_k>0,\]
since $M$ has strictly positive entries. Hence,
\[M\textbf{w}\geq \alpha\textbf{x}\;\;\;\text{for}\;\;\;\alpha=\min_{1\leq j\leq n} \frac{(M\textbf{w})_j}{x_j}>0.\]
Moreover, for $m\geq1$ we can multiply the inequality above by the nonnegative matrix $M^{m-1}$ to deduce that
\[M^m\textbf{w}=M^{m-1}(M\textbf{w})\geq M^{m-1}(\alpha\textbf{x})=\alpha\lambda^{m-1}\textbf{x}\geq \alpha\textbf{x}.\]
In particular, we see that
\[\textbf{v}^{\textup{T}}M^{m}\textbf{w}\geq \textbf{v}^{\textup{T}}(\alpha \textbf{x}) =\alpha (\textbf{v}^{\textup{T}}\textbf{x})=:c>0\]
holds for every $m\geq1$; here we use that both $\textbf{v}$ and $\textbf{x}$ have positive entries. Summing this inequality we see that
\begin{equation}\label{eq:sumlbd}
a_N=\textbf{v}^{\textup{T}}\Big(\sum_{m=0}^N M^m\Big)\textbf{w}=\textbf{v}^{\textup{T}}\textbf{w}+\textbf{v}^{\textup{T}}\Big(\sum_{m=1}^N M^m\Big)\textbf{w}\geq Nc.
\end{equation}
Hence $a_N\rightarrow\infty$ as $N\rightarrow\infty$ as claimed.

For the general case, suppose that $M$ is a primitive matrix with $\textup{sr}(M)\geq1$. Then we may choose $k>0$ such that $M^k>0$. Moreover, $\textup{sr}(M^k)=\textup{sr}(M)^k\geq1$. Hence, \eqref{eq:sumlbd} implies that
\[a_{kN}(M,\textbf{v},\textbf{w})=\textbf{v}^{\textup{T}}\Big(\sum_{m=0}^{kN} M^{m}\Big)\textbf{w}\geq \textbf{v}^{\textup{T}}\Big(\sum_{m=0}^N M^{km}\Big)\textbf{w}=a_N(M^k,\textbf{v},\textbf{w})\geq Nc.\]
Here $c$ is the constant established above for the matrix $M^k$ in place of $M$. In particular, since the sequence $a_{N}(M,\textbf{v},\textbf{w})$ is non-decreasing (it is a partial sum with nonnegative terms), it follows that $\displaystyle{\lim_{N\rightarrow\infty}a_N(M,\textbf{v},\textbf{w})=\infty}$ as claimed.
\end{proof}
Lastly, we need the following fact; see \cite[Theorem 8.4.5]{horn2012matrix} for a more general statement.
\begin{lem}\label{lem:critical+line+poles}
Let $A$ be an $n\times n$ nonnegative, primitive matrix with $\textup{sr}(A)=1$ and suppose that $B\in\textup{Mat}_{n\times n}(\mathbb{C})$ satisfies $|B_{ij}|=A_{ij}$ for all $1\leq i,j\leq n$. If $1$ is an eigenvalue of $B$, then $B=DAD^{-1}$ for some diagonal matrix $D$ with unimodular entries.
\end{lem}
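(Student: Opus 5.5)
The plan is to follow the classical Wielandt argument, which is the equality case of part (4) of Theorem \ref{thm:P-F}. Let $\textbf{x}\neq 0$ be a (complex) eigenvector of $B$ with $B\textbf{x}=\textbf{x}$, and write $|\textbf{x}|$ for the vector of absolute values. The triangle inequality gives, entrywise,
\[|\textbf{x}|=|B\textbf{x}|\leq |B|\,|\textbf{x}|=A|\textbf{x}|.\]
First I will show this inequality is an equality. Take a left eigenvector $\textbf{w}>0$ of $A$ with eigenvalue $\textup{sr}(A)=1$, from part (5) of Theorem \ref{thm:P-F}. Then
\[\textbf{w}^{\textup{T}}\big(A|\textbf{x}|-|\textbf{x}|\big)=\textbf{w}^{\textup{T}}|\textbf{x}|-\textbf{w}^{\textup{T}}|\textbf{x}|=0.\]
Since $A|\textbf{x}|-|\textbf{x}|\geq0$ and $\textbf{w}$ is strictly positive, this forces $A|\textbf{x}|=|\textbf{x}|$. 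So $|\textbf{x}|$ is a nonnegative eigenvector of $A$ for the eigenvalue $1$. Choosing $k$ with $A^k>0$, we get $|\textbf{x}|=A^k|\textbf{x}|>0$, because $|\textbf{x}|\neq0$. Hence every entry of $\textbf{x}$ is nonzero.

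Next, define $D=\textup{diag}(x_1/|x_1|,\dots,x_n/|x_n|)$. This is diagonal with unimodular entries, and $\textbf{x}=D|\textbf{x}|$. Set $C=D^{-1}BD$. Then $|C_{ij}|=|B_{ij}|=A_{ij}$, and
\[C|\textbf{x}|=D^{-1}B\textbf{x}=D^{-1}\textbf{x}=|\textbf{x}|=A|\textbf{x}|.\]
Reading off the $i$-th row gives $\sum_j C_{ij}|x_j|=\sum_j |C_{ij}||x_j|$. Every $|x_j|>0$, and $\Re(C_{ij})\leq |C_{ij}|$ for each term. Comparing real parts therefore forces $\Re(C_{ij})=|C_{ij}|$, so $C_{ij}=|C_{ij}|=A_{ij}$ for all $i,j$. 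Thus $C=A$, i.e., $B=DAD^{-1}$.

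The main step, and the only place the hypotheses really enter, is the upgrade from $|\textbf{x}|\leq A|\textbf{x}|$ to equality and strict positivity of $|\textbf{x}|$. This uses $\textup{sr}(A)=1$ together with the positive left Perron vector, and then primitivity. The final step relies on $|x_j|>0$ for every $j$: without it, the entries $C_{ij}$ in a column with $x_j=0$ would be unconstrained. The rest is routine bookkeeping with the triangle inequality.
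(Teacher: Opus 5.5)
Your proposal is correct and follows essentially the same argument as the paper. Both use the triangle inequality, then the positive left Perron vector to force $A|\textbf{x}|=|\textbf{x}|$, then primitivity to get $|\textbf{x}|>0$, and finally conjugate by the phase matrix $D$ and compare real parts row by row to conclude $D^{-1}BD=A$.
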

\begin{proof} In what follows, given a complex valued matrix $W$ (or vector), we denote by $|W|$ the matrix (or vector) obtained by taking the complex absolute value of each entry; see \cite[\S8.1]{horn2012matrix} for basic properties. Now choose a nonzero eigenvector $\textbf{v}\in\mathbb{C}^n$ for $B$ satisfying $B\textbf{v}=\textbf{v}$. Then the triangle inequality implies that
\[|\textbf{v}|=|B\textbf{v}|\leq |B||\textbf{v}|=A\,|\textbf{v}| \]
entrywise. Now let $\textbf{u}$ be a left eigenvector of $A$ with eigenvalue $\lambda=1$ and all positive entries (given by Theorem \ref{thm:P-F} part (5)). Then
\[\textbf{u}^{\textup{T}}\big(A|\textbf{v}|-|\textbf{v}|\big)=(\textbf{u}^{\textup{T}}A)|\textbf{v}|-\textbf{u}^{\textup{T}}|\textbf{v}|=\textbf{u}^{\textup{T}}|\textbf{v}|-\textbf{u}^{\textup{T}}|\textbf{v}|=0.\]
Since $A|\textbf{v}|-|\textbf{v}|$ has nonnegative entries and $\textbf{u}$ has strictly positive entries, it follows that $A|\textbf{v}|=|\textbf{v}|$. Now choose $k>0$ such that $A^k$ has strictly positive entries (possible, since $A$ is primitive) and choose an index $i_0$ with $|\textbf{v}|_{i_0}\neq 0$ (possible, since eigenvectors are nonzero). Then multiplying the equation $A|\textbf{v}|=|\textbf{v}|$ by $A$ repeatedly yields $A^k|\textbf{v}|=|\textbf{v}|$. Hence,
\[|\textbf{v}|_j=\sum_{i=1}^n(A^k)_{ji}\,|\textbf{v}|_i\geq (A^k)_{ji_0}|\textbf{v}|_{i_0}>0\]
for all $1\leq j\leq n$. That is, $|\textbf{v}|$ is a vector with all positive entries. Now let $D$ be the diagonal matrix given by $D=\textup{Diag}(\textbf{v}_1/|\textbf{v}_1|,\dots,\textbf{v}_n/|\textbf{v}_n|)$. In particular, we note that $D$ has unimodular entries and $D|\textbf{v}|=\textbf{v}$ by definition of $D$. Finally, we set $C=D^{-1}BD$ and note that
\begin{equation}\label{eq:matrix+C}
C|\textbf{v}|=|\textbf{v}|\qquad\text{and}\qquad |C|=A.
\end{equation}
Indeed, $C|\textbf{v}|=D^{-1}BD|\textbf{v}|=D^{-1}B\textbf{v}=D^{-1}\textbf{v}=|\textbf{v}|$ and $C_{ij}=\zeta_j\zeta_i^{-1}B_{ij}$ where $\zeta_i=\textbf{v}_i/|{\textbf{v}_i}|$, from which it follows that $|C|=|B|=A$ by assumption on $B$. But then from \eqref{eq:matrix+C} we deduce that
\[0<|\textbf{v}|_j=\sum_{i=1}^nC_{ji}|\textbf{v}|_i=\Big|\sum_{i=1}^nC_{ji}|\textbf{v}|_i\Big|\leq \sum_{i=1}^n|C_{ji}|\,|\textbf{v}|_i=\sum_{i=1}^nA_{ji}|\textbf{v}|_i=(A|\textbf{v}|)_j=|\textbf{v}|_j\]
holds for all $j$. In particular, setting $w_{ji}:=C_{ji}|\textbf{v}|_i$ we deduce that
\[w_{j1}+\dots+w_{jn}=|w_{j1}|+\dots+|w_{jn}|\]
for all $j$. Since $\Re(w_{ji})\leq|w_{ji}|$ for each term, it follows that $w_{ji}=|w_{ji}|$ for all $1\leq i,j\leq n$. But then, since each entry of $|\textbf{v}|$ is nonzero, we deduce that $C_{ji}=|C_{ji}|=A_{ji}$ for all $1\leq i,j\leq n$. Thus $C=A$, so that $B=DAD^{-1}$, and the claim follows.
\end{proof}
As a consequence, we derive the analytic properties necessary to apply the Wiener-Ikehara Tauberian theorem.
\begin{thm} \label{thm:Z_eta+analytic+properties} Let $\mathcal{D}=(\mathcal{V},\mathcal{E},\mu)$ be a finite directed multigraph with $n\geq2$ vertices, let $d:\mathcal{V}\rightarrow \mathbb{Z}_{\geq2}$, and extend $d$ multiplicatively to walks on $\mathcal{D}$. Moreover, assume that $\mathcal{D}$ is strongly connected and that there is an edge $v\rightarrow v$ for all $v\in\mathcal{V}$. Then the following statements hold for all walks $\eta$ on $\mathcal{D}$:
\vspace{.1cm}
\begin{enumerate}
    \item[\textup{(1)}] There is a unique $\rho>0$
    with $\textup{sr}(M(\rho))=1$. In particular, $\det(I-M(\rho))=0$.
    \vspace{.2cm}
    \item[\textup{(2)}] $Z_\eta(s)$ converges for $\Re(s)>\rho$ and diverges for all real $s\leq\rho$. In particular, $\rho$ is the abscissa of convergence of $Z_\eta$.
    \vspace{.2cm}
    \item[\textup{(3)}] $Z_\eta$ extends meromorphically to $\mathbb{C}$, is analytic on $\{\Re(s)>\rho\}$, and has a simple pole with a positive residue at $s=\rho$.
    \vspace{.2cm}
    \item[\textup{(4)}] If in addition $d(v_1),\dots,d(v_n)$ are not all powers of a single integer, then $s=\rho$ is the unique pole of $Z_\eta(s)$ on the line $\Re(s)=\rho$.
\end{enumerate}
\end{thm}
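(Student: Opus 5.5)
The plan is to study the real function $\phi(\sigma):=\textup{sr}(M(\sigma))$ for $\sigma\in\mathbb{R}$ and then use the formula of Theorem \ref{thm:walk+with+terminal+segment}. For real $\sigma$, $M(\sigma)$ is nonnegative with the same zero pattern as $A$, so it is primitive by Lemma \ref{lem:transfer+matrix+primitive}. Its entries $A_{ij}d(v_j)^{-\sigma}$ are continuous and strictly decreasing in $\sigma$ wherever $A_{ij}>0$. Theorem \ref{thm:P-F}(4) then shows that $\phi$ is strictly decreasing, and $\phi$ is continuous because eigenvalues depend continuously on the matrix.

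For part (1), we have $\phi(0)=\textup{sr}(A)>1$ by Lemma \ref{lem:transfer+matrix+primitive}. On the other hand, $\phi(\sigma)\le\|A\|_\infty d_{\min}^{-\sigma}\to0$. The intermediate value theorem and strict monotonicity then give a unique $\rho>0$ with $\phi(\rho)=1$. Since $1=\textup{sr}(M(\rho))$ is itself an eigenvalue by Theorem \ref{thm:P-F}(1), it follows that $\det(I-M(\rho))=0$.

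For part (2), take $\Re(s)>\rho$ and set $\sigma=\Re(s)$. Then $|M(s)|=M(\sigma)$ entrywise, and $\textup{sr}(M(\sigma))<1$, so $\sum_m M(\sigma)^m$ converges. The termwise bound $|\delta(s)M(s)^m\nu|\le\delta(\sigma)M(\sigma)^m\nu$, together with the computation \eqref{eq:Z+eta+expression}, run with nonnegative terms, gives absolute convergence of $Z_\eta(s)$.

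For real $s\le\rho$, I would use the same identity with nonnegative terms, where the partial sums over walk length are $\delta(s)^{\mathrm T}(\sum_{m\le N}M(s)^m)\nu(\eta)$. Here $\delta(s)>0$ and $\nu(\eta)\ge0$, and $\nu(\eta)\neq0$ because the vertex $v_q$ has a loop. Moreover, $\textup{sr}(M(s))\ge1$ by monotonicity. Lemma \ref{lem:matrix+divergence} therefore gives divergence.

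For part (3), meromorphy and analyticity on $\Re(s)>\rho$ come from Theorem \ref{thm:walk+with+terminal+segment}, because $I-M(s)$ is invertible when $\textup{sr}(M(\sigma))<1$; this holds since any eigenvalue $\beta$ of $M(s)$ satisfies $|\beta|\le\textup{sr}(|M(s)|)$. For the pole at $\rho$, I would use Perron–Frobenius simplicity (Theorem \ref{thm:P-F}(3)). Near $\rho$, write $\lambda(s)$ for the analytic simple eigenvalue with $\lambda(\rho)=1$. Then $(I-M(s))^{-1}=\frac{1}{1-\lambda(s)}\frac{\mathbf r(s)\mathbf l(s)^{\mathrm T}}{\mathbf l(s)^{\mathrm T}\mathbf r(s)}+(\text{analytic})$, where $\mathbf r,\mathbf l$ are the positive right and left eigenvectors at $\rho$.

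The residue is then
\[\frac{d(\eta)^{-\rho}\,(\delta(\rho)\mathbf r)(\mathbf l^{\mathrm T}\nu(\eta))}{-\lambda'(\rho)\,\mathbf l^{\mathrm T}\mathbf r}.\]
The factors $\delta(\rho)\mathbf r$ and $\mathbf l^{\mathrm T}\nu(\eta)$ are positive, using $\mathbf l>0$ and $\nu\ge0$ nonzero. It remains to show $\lambda'(\rho)<0$, and the derivative formula gives $\lambda'(\rho)=\mathbf l^{\mathrm T}M'(\rho)\mathbf r/\mathbf l^{\mathrm T}\mathbf r$. Here $M'(\rho)_{ij}=-\log d(v_j)\,M_{ij}(\rho)\le0$ and is nonzero, so $\lambda'(\rho)<0$. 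The pole is therefore simple with positive residue. This eigenvalue-perturbation step is where I expect the main technical care to be needed.

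For part (4), suppose $s=\rho+it$ with $t\neq0$ is a pole. Then $\det(I-M(s))=0$, so $1$ is an eigenvalue of $B=M(s)$, and $|B|=M(\rho)$ has spectral radius $1$. Lemma \ref{lem:critical+line+poles} gives $M(s)=DM(\rho)D^{-1}$ with $D=\mathrm{Diag}(\zeta_i)$, that is, $d(v_j)^{-it}=\zeta_i\zeta_j^{-1}$ whenever $A_{ij}>0$. Applying this to the loops $A_{jj}>0$ gives $d(v_j)^{-it}=1$ for every $j$. Hence every $t\log d(v_j)\in2\pi\mathbb{Z}$, so the $\log d(v_j)$ are pairwise commensurable, i.e. $d(v_j)^{a_j}=d(v_k)^{a_k}$ for positive integers $a_j,a_k$.

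By unique factorization, all the $d(v_j)$ are then powers of a common integer $g$: take $g$ with exponent vector equal to the primitive vector on the common ray. This contradicts the hypothesis. In this case also $M(s)=M(\rho)$, so the numerator $\delta(s)\,\textup{adj}(I-M(s))\,\nu(\eta)$ causes no issue either.
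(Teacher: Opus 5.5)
Your proposal is correct. Parts (1), (2) and (4) follow the paper's proof up to small variations:
\begin{itemize}
\item For divergence at real $s<\rho$, you apply Lemma \ref{lem:matrix+divergence} directly, using that $\sigma\mapsto\textup{sr}(M(\sigma))$ is strictly decreasing on all of $\mathbb{R}$. The paper handles only $s=\rho$ this way and then compares $d(\gamma)^{-s}\geq d(\gamma)^{-\rho}$.
\item In (4), you obtain the common base by unique factorization (the primitive exponent vector on the common ray) rather than citing Lemma \ref{lem:mult_dep}.
\end{itemize}

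The genuine difference is in part (3). The paper works with $p(s)=\det(I-M(s))$:
\begin{itemize}
\item It computes $p'(\rho)$ via Jacobi's formula and the rank-one adjugate identity $\textup{adj}(I-M(\rho))=c(\rho)\mathbf{v}\mathbf{w}^{\textup{T}}$ with $c(\rho)>0$, concluding that $\rho$ is a simple zero of $p$.
\item It writes $Z_\eta=f/p$ with $f$ entire and gets $f(\rho)\geq 0$ by continuity from real $s>\rho$.
\item It invokes Landau's theorem to know $\rho$ is actually a pole, which forces $f(\rho)>0$.
\end{itemize}

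You instead isolate the analytic simple Perron eigenvalue $\lambda(s)$ and its spectral projection near $\rho$, and use the first-order perturbation formula $\lambda'(\rho)=\mathbf{l}^{\textup{T}}M'(\rho)\mathbf{r}/\mathbf{l}^{\textup{T}}\mathbf{r}<0$.

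What your route buys:
\begin{itemize}
\item An explicit residue, and hence an explicit constant $c_\eta=\textup{Res}(Z_\eta,\rho)/\rho$ in Corollary \ref{cor:count+walks+terminal+segment}.
\item Non-vanishing of the residue comes directly from $\delta(\rho)\mathbf{r}>0$ and $\mathbf{l}^{\textup{T}}\nu(\eta)>0$, so neither Landau's theorem nor the sign/continuity argument is needed.
\end{itemize}

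What it costs: you need analytic perturbation theory. That means the analytic dependence of a simple eigenvalue and of its Riesz projection, plus analyticity near $\rho$ of the complementary part of the resolvent. The latter holds because $|\lambda_j(\rho)|<1$ for $j\geq 2$ by Theorem \ref{thm:P-F}(2). The paper stays entirely with determinants and adjugates.

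The two computations agree: $p(s)=\prod_j(1-\lambda_j(s))$ gives $p'(\rho)=-\lambda'(\rho)\prod_{j\geq 2}(1-\lambda_j(\rho))$.

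Finally, your closing remark about the numerator $\delta(s)\,\textup{adj}(I-M(s))\,\nu(\eta)$ in (4) is unnecessary. You argue by contradiction starting from $\det(I-M(s))=0$, so the numerator never enters.
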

\begin{proof}
Let $s\geq0$ be a real number. Then, since $A$ is primitive by Lemma \ref{lem:transfer+matrix+primitive}, $M(s)$ is primitive also. In fact, it is straightforward to check that if two nonnegative matrices $P$ and $Q$ have the same support (i.e., $P_{ij}>0$ if and only if $Q_{ij}>0$), then $P^m$ and $Q^m$ have the same support for all $m\geq1$. Now define the function $\lambda: \mathbb{R}_{\geq0}\rightarrow\mathbb{R}_{\geq0}$ given by $\lambda(s)=\textup{sr}(M(s))$. Then Theorem \ref{thm:P-F} part (4) implies that $\lambda$ is a strictly decreasing function: indeed, if $0\leq s<s'$, then $0\leq M_{ij}(s')\leq M_{ij}(s)$ for all $i,j$ and $M(s')\neq M(s)$, since each $d(v_j)>1$ and $A\neq0$. Moreover, $\lambda(0)=\textup{sr}(M(0))=\textup{sr}(A)>1$ by Lemma \ref{lem:transfer+matrix+primitive}. Similarly, $\lim_{s\rightarrow\infty} \lambda(s)=0$, since
\[\lambda(s)=\textup{sr}(M(s))\leq \|M(s)\|_\infty=\max_{1\leq i\leq n}\Big\{\sum_{j=1}^nA_{ij}d(v_j)^{-s}\Big\}\leq \|A\|_{\infty}\,d_{\min}^{-s},\]
where $d_{\min}$ and $\|A\|_{\infty}$ are as in Lemma \ref{lem:basic+convergence}. On the other hand, $\lambda$ is continuous since it is the composition of $s\mapsto M(s)$ and $M\mapsto\textup{sr}(M)$, which are both continuous. In particular, there exists a unique $s=\rho>0$ such that $\lambda(\rho)=1$. Moreover, $1$ is an eigenvalue of $M(\rho)$ by Theorem \ref{thm:P-F} part (1), so that $\det(I-M(\rho))=0$. Thus, we have established statement (1).

For statement (2), we first note that when $s$ is real, every term appearing in the derivation of \eqref{eq:Z+eta+expression} in the proof of Theorem \ref{thm:walk+with+terminal+segment} is nonnegative. Hence, the same computation (now justified since series of nonnegative terms may be rearranged freely) shows that
\begin{equation}\label{eq:Z}
Z_\eta(s)=d(\eta)^{-s}\Big(1+\delta(s)\Big(\sum_{m\geq0}
M(s)^{m}\Big)\nu(\eta)\Big)
\end{equation}
holds for all real $s$ as an identity in $[0,\infty]$. Now, for real $s>\rho$, we have that $\lambda(s)<1$ and so $\sum_{m\geq0}M(s)^m$ converges; see, for instance, \cite[Theorem 5.6.15]{horn2012matrix}. Hence, $Z_\eta(s)<\infty$ for such $s$. Moreover, a Dirichlet series with nonnegative coefficients that converges at a real point $s$ converges absolutely on $\Re(s')\geq s$. Thus, $Z_\eta$ converges on $\Re(s)>\rho$. On the other hand, $M(\rho)$ is a nonnegative, primitive matrix with $\textup{sr}(M(\rho))=1$, the vector $\delta(\rho)$ has strictly positive entries, and $\nu(\eta)$ has nonnegative entries and is nonzero, since its $q$th entry is $A_{qq}\geq1$ (here $v_q=\iota(\eta)$). Therefore, Lemma \ref{lem:matrix+divergence} applied to the matrix $M(\rho)$ and the vectors $\textbf{v}=\delta(\rho)^{\textup{T}}$ and $\textbf{w}=\nu(\eta)$, together with \eqref{eq:Z}, implies that $Z_\eta(\rho)=\infty$. Finally, let $s<\rho$ be real. Since $d(\gamma)\geq2$ for every walk $\gamma$ and $\rho-s>0$, we have that
\[d(\gamma)^{-s}=d(\gamma)^{-\rho}\,d(\gamma)^{\rho-s}\geq d(\gamma)^{-\rho}\]
for every walk $\gamma$. Hence, for every $x\geq1$,
\[\sum_{k\leq x}\frac{a_k}{k^{s}}=\sum_{\substack{\gamma\in\mathcal{W}_\eta\\ d(\gamma)\leq x}}d(\gamma)^{-s}\;\geq\sum_{\substack{\gamma\in\mathcal{W}_\eta\\ d(\gamma)\leq x}}d(\gamma)^{-\rho}=\sum_{k\leq x}\frac{a_k}{k^{\rho}}.\]
Since $Z_\eta(\rho)=\infty$ and all terms are nonnegative, the right-hand side tends to $\infty$ as $x\rightarrow\infty$. Therefore, so does the left-hand side, i.e., $Z_\eta(s)=\infty$ as well. Hence, $Z_\eta$ diverges for all real $s\leq\rho$, and $\rho$ is the abscissa of convergence of $Z_\eta$ for all walks $\eta$.

Next, for statement (3), let $s\in\mathbb{C}$ with $\sigma=\Re(s)>\rho$, and for a complex matrix $W$ write $|W|$ for the matrix of absolute values of the entries of $W$. Since $|d(v_j)^{-s}|=d(v_j)^{-\sigma}$, we have that $|M(s)|=M(\sigma)$, and hence $|M(s)^m|\leq M(\sigma)^m$ entrywise for all $m\geq0$. Since $\sum_{m\geq0}M(\sigma)^m$ converges by the proof of statement (2), it follows that $\sum_{m\geq0}M(s)^m$ converges absolutely, and the standard telescoping argument identifies it with $(I-M(s))^{-1}$. In particular, $\det(I-M(s))\neq0$ for all $\Re(s)>\rho$. Moreover, every rearrangement in \eqref{eq:Z+eta+expression} is dominated by the corresponding (convergent) sum at the real point $\sigma$, so the computation in the proof of Theorem \ref{thm:walk+with+terminal+segment} and Cramer's rule give
\begin{equation}\label{eq:Cramer}
Z_{\eta}(s)=d(\eta)^{-s}\Big(1+\frac{\delta(s)\,\textup{adj}\big(I-M(s)\big)\,\nu(\eta)}{\det\big(I-M(s)\big)}\Big)
\end{equation}
for all $\Re(s)>\rho$. In particular, $Z_\eta(s)$ is a rational function in $d(v_1)^{-s},\dots,d(v_n)^{-s}$, and so has a meromorphic continuation to $\mathbb{C}$; this continuation is analytic on $\Re(s)>\rho$, since the denominator in \eqref{eq:Cramer} does not vanish there. Moreover, since $\rho$ is the abscissa of convergence of $Z_\eta$ by statement (2), Landau's theorem (see \cite[Theorem 1.7]{montgomery2007multiplicative}) implies that $s=\rho$ is a singularity, and hence a pole, of the continuation. Therefore, it suffices to prove that $\rho$ is a simple pole with positive residue. In particular, we first show that $s=\rho$ is a simple root of the analytic function $p(s)=\det(I-M(s))$. To see this, we note that
\begin{equation}\label{eq:Jacobi}
p'(\rho)=\textup{Tr}\bigg(\textup{adj}\Big(I-M(\rho)\Big)\,\bigg(\hspace{-.1cm}-\frac{dM(s)}{ds}\bigg)\bigg\vert_{s=\rho}\ \bigg)
\end{equation}
by Jacobi's formula for the derivatives of determinants. Now let $\textbf{v}$ and $\textbf{w}$ be the positive right and left eigenvectors of the primitive matrix $M(\rho)$ with eigenvalue $\lambda(\rho)=1$; see Theorem \ref{thm:P-F} part (5). Then \cite[Theorem 2.1]{castillo2022formula} implies that
\begin{equation}\label{eq:adj}
\textup{adj}\Big(I-M(\rho)\Big)=c(\rho)\,\textbf{v}\textbf{w}^{\textup{T}},\qquad\text{where}\qquad c(\rho):=\frac{\prod_{j=2}^n\big(1-\lambda_j(\rho)\big)}{\textbf{w}^{\textup{T}}\textbf{v}}
\end{equation}
and $1=\lambda_1(\rho), \lambda_2(\rho),\dots, \lambda_n(\rho)$ are the eigenvalues of $M(\rho)$, listed with multiplicity. Moreover, $c(\rho)>0$. Indeed, by Theorem \ref{thm:P-F} parts (2) and (3), we have $|\lambda_j(\rho)|<1$ for all $j\geq2$; since $M(\rho)$ is a real matrix, its non-real eigenvalues come in conjugate pairs, so that each real factor $1-\lambda_j(\rho)$ and each product of a conjugate pair of factors $|1-\lambda_j(\rho)|^2$ is positive; and $\textbf{w}^{\textup{T}}\textbf{v}>0$ since $\textbf{v}$ and $\textbf{w}$ have positive entries. On the other hand, since $M_{ij}(s)=A_{ij}d(v_j)^{-s}$ by definition, we see that
\begin{equation}\label{eq:M+derivative}
\bigg(\hspace{-.1cm}-\frac{dM(s)}{ds}\bigg\vert_{s=\rho}\,\bigg)_{ij}= M_{ij}(\rho)\log(d(v_j)).
\end{equation}
In particular, combining \eqref{eq:Jacobi}, \eqref{eq:adj}, and \eqref{eq:M+derivative} we see that
\[p'(\rho)=\textup{Tr}\Big(c(\rho)\,(\textbf{v}\textbf{w}^{\textup{T}})M(\rho)\,\textup{Diag}(\ell_1,\dots,\ell_n)\Big),\]
where $\ell_i:=\log(d(v_i))>0$. However, $\textbf{w}^{\textup{T}}M(\rho)=\textbf{w}^{\textup{T}}$ by definition of $\textbf{w}$. In particular, we compute from above that
\[p'(\rho)=c(\rho)\,\textbf{w}^{\textup{T}}\textup{Diag}(\ell_1,\dots,\ell_n)\textbf{v}.\]
Here we use standard properties of the trace: that $\textup{Tr}(cX)=c\,\textup{Tr}(X)$, that $\textup{Tr}(\textbf{b}\textbf{a}^{\textup{T}})=\textbf{a}^{\textup{T}}\textbf{b}$, and that $\textup{Tr}(XYW)=\textup{Tr}(WXY)=\textup{Tr}(YWX)$. In particular, $p'(\rho)>0$ since $c(\rho)>0$ and all matrices and vectors above have strictly positive entries. Therefore, $s=\rho$ is a simple zero of $p$.

Finally, by \eqref{eq:Cramer} we may write $Z_\eta(s)=f(s)/p(s)$, where
\[f(s)=d(\eta)^{-s}\Big(p(s)+\delta(s)\,\textup{adj}\big(I-M(s)\big)\,\nu(\eta)\Big)\]
is an entire function. For real $s>\rho$, all eigenvalues of the real matrix $M(s)$ have absolute value at most $\lambda(s)<1$, so the argument used to show $c(\rho)>0$ gives $p(s)=\prod_{j=1}^n(1-\lambda_j(s))>0$; since also $Z_\eta(s)>0$, it follows that $f(s)=Z_\eta(s)p(s)>0$ for all real $s>\rho$. Hence, $f(\rho)\geq0$ by continuity. On the other hand, since $s=\rho$ is a pole of $Z_\eta$ and $p'(\rho)\neq0$, we have that
\[\operatorname{ord}_{\rho}(f)-1=\operatorname{ord}_{\rho}(f)-\operatorname{ord}_{\rho}(p)=\operatorname{ord}_{\rho}(Z_\eta)\leq-1.\]
Hence, it must be the case that $\operatorname{ord}_{\rho}(f)=0$, so that $f(\rho)\neq0$ and thus $f(\rho)>0$. Therefore, $s=\rho$ is a simple pole of $Z_\eta$ and
\[\textup{Res}(Z_\eta,\rho)=\frac{f(\rho)}{p'(\rho)}>0,\]
as claimed.

Finally, for statement (4) we assume further that $d(v_1), \dots, d(v_n)$ are not all powers of a single integer and that $s=\rho+iy\in\mathbb{C}$ is a pole of $Z_\eta$. Then Theorem \ref{thm:walk+with+terminal+segment} implies that $\det(I-M(s))=0$. Hence, $1$ is an eigenvalue of $M(s)$. On the other hand, since $|d(v_j)^{-s}|=d(v_j)^{-\rho}$, we see that $|M(s)|=M(\rho)$. In particular, the matrices $M(s)$ and $M(\rho)$ satisfy the hypotheses of Lemma \ref{lem:critical+line+poles} (with $B=M(s)$ and $A=M(\rho)$ in the notation of the lemma), so that $M(s)=DM(\rho)D^{-1}$ for some diagonal matrix $D$ with unimodular entries. But then, equating diagonal entries in this expression implies that
\[A_{ii}d(v_i)^{-s}=A_{ii}d(v_i)^{-\rho}\]
holds for all $1\leq i\leq n$. Since $A_{ii}>0$ by our assumption about the existence of loops at each vertex, we see that $d(v_i)^{-s}=d(v_i)^{-\rho}$, i.e., $d(v_i)^{iy}=1$, for all $i$. In particular, if $s\neq \rho$, then $y\neq0$ and Lemma \ref{lem:mult_dep} below implies that $d(v_1), \dots, d(v_n)$ are all powers of a single integer, a contradiction. Therefore, $s=\rho$ is the unique pole of $Z_\eta$ on the line $\Re(s)=\rho$ as claimed.
\end{proof}
\begin{lem}\label{lem:mult_dep} Let $d_1,\dots,d_r\geq2$ be integers. If there exists a nonzero $y\in\mathbb{R}$ such that $d_j^{iy}=1$ for all $1\leq j\leq r$, then $\log(d_i)/\log(d_j)\in\mathbb{Q}$ for all $1\leq i,j\leq r$. In particular, $d_1,\dots, d_r$ are pairwise multiplicatively dependent and $\langle d_1,\dots, d_r\rangle\subseteq \langle d\rangle$ for some integer $d\geq2$.
\end{lem}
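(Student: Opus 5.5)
The plan is to translate the hypothesis $d_j^{iy}=1$ into a statement about rational dependence of logarithms, and then to extract a common base from unique factorization in $\mathbb{Z}$.

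First, since $d_j\geq2$, we have $d_j^{iy}=e^{iy\log d_j}$. The hypothesis $d_j^{iy}=1$ therefore means $y\log d_j\in 2\pi\mathbb{Z}$, so $y\log d_j=2\pi k_j$ for some integer $k_j$. Because $y\neq0$ and $\log d_j>0$, each $k_j$ is nonzero. Taking quotients gives $\log(d_i)/\log(d_j)=k_i/k_j\in\mathbb{Q}$ for all $i,j$. Clearing denominators, we get positive integers $a,b$ with $d_i^{b}=d_j^{a}$, which is the pairwise multiplicative dependence.

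The remaining claim is the existence of a single integer $d\geq2$ with $\langle d_1,\dots,d_r\rangle\subseteq\langle d\rangle$. It suffices to show that each $d_j$ is a positive power of $d$.

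1. Fix one index, say $d_1$, and write its prime factorization $d_1=\prod_p p^{e_p}$.
2. From $d_j^{a_j}=d_1^{b_j}$ with $a_j,b_j\geq1$, unique factorization shows that every $d_j$ has the same set of prime divisors as $d_1$. Moreover, its exponent vector $(f_p)_p$ is proportional to $(e_p)_p$ by the positive rational factor $b_j/a_j$.
3. So all exponent vectors lie on the ray $\mathbb{Q}_{>0}\,(e_p)_p$. Let $(g_p)_p$ be the primitive integer vector on this ray, namely $(e_p)_p$ divided by the gcd of its entries, and set $d=\prod_p p^{g_p}\geq2$.
4. Each exponent vector of $d_j$ is a positive rational multiple $t_j(g_p)_p$ and has integer entries. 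Since the gcd of the $g_p$ is $1$, the multiple $t_j$ must be a positive integer, so $d_j=d^{t_j}$.

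Each $d_j$ is therefore a positive power of $d$, and hence so is every product of the $d_j$, giving $\langle d_1,\dots,d_r\rangle\subseteq\langle d\rangle$.

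The only step that needs care is step 4, namely that a rational multiple of a primitive integer vector which is itself integral must be an integer multiple. Writing $t_j=u/v$ in lowest terms, $v$ divides $u\,g_p$ for every $p$, hence divides every $g_p$, hence divides their gcd, which is $1$. So $v=1$. Everything else is immediate from the definitions.
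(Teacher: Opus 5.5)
Your proof is correct, and its first half is exactly the paper's argument: you write $y\log d_j=2\pi k_j$ with $k_j\neq 0$, so that $\log(d_i)/\log(d_j)=k_i/k_j$ is a positive rational and $d_i^{b}=d_j^{a}$ for some positive integers $a,b$.

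The only difference is in the final containment $\langle d_1,\dots,d_r\rangle\subseteq\langle d\rangle$. The paper does not prove this step; it cites Lemma 2.4 of \cite{MR4988033} to pass from pairwise multiplicative dependence to a common base. You instead prove it inline by unique factorization, taking $d$ to be the integer whose exponent vector is the primitive vector on the common ray, i.e.\ the root of $d_1$ that is not itself a perfect power. This makes the lemma self-contained at the cost of a few extra lines. Your gcd argument in step 4 correctly handles the only delicate point.
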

\begin{proof} We have that $d_j^{iy}=e^{iy\log d_j}$. Hence, $d_j^{iy}=1$ if and only if $y\log d_j=2\pi k_j$ for some $k_j\in\mathbb{Z}$. Moreover, $k_j\neq0$ for all $j$, since $y\neq0$ and $\log d_j>0$. Hence, $\log(d_i)/\log(d_j)=k_i/k_j$ is rational for all pairs $i,j$. In particular, it follows that $d_i^{k_j}=d_j^{k_i}$, and each pair $d_i, d_j$ is multiplicatively dependent. From here, it follows from \cite[Lemma 2.4]{MR4988033} that $\langle d_1,\dots, d_r\rangle\subseteq \langle d\rangle$ for some $d\geq2$.
\end{proof}
Finally, the desired asymptotic for the number of walks with a given terminal segment follows from the Wiener-Ikehara Tauberian theorem \cite[\S8.3]{montgomery2007multiplicative}.
\begin{cor}\label{cor:count+walks+terminal+segment} Let $\mathcal{D}=(\mathcal{V},\mathcal{E},\mu)$ be a finite directed multigraph with $n\geq2$ vertices, let $d:\mathcal{V}\rightarrow \mathbb{Z}_{\geq2}$, and extend $d$ multiplicatively to walks on $\mathcal{D}$. Moreover, assume that $\mathcal{D}$ is strongly connected, that there is an edge $v\rightarrow v$ for all $v\in\mathcal{V}$, and that $d(v_1),\dots,d(v_n)$ are not all powers of a single integer. Then there exists a constant $\rho>0$ such that for all walks $\eta$ of $\mathcal{D}$ there is a constant $c_\eta>0$ such that
\[\#\{\gamma\in\mathcal{W}_\eta: d(\gamma)\leq x\}=c_\eta x^{\rho}+o(x^{\rho})\]
as $x\rightarrow\infty$.
\end{cor}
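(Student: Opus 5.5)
The plan is to apply the Wiener--Ikehara Tauberian theorem to the Dirichlet series $Z_\eta(s)=\sum_{k\geq1}a_kk^{-s}$, with $a_k=a_k(d,\eta)\geq0$, and then read off the counting function $\sum_{k\leq x}a_k=\#\{\gamma\in\mathcal{W}_\eta: d(\gamma)\leq x\}$. We take $\rho>0$ to be the unique real number with $\textup{sr}(M(\rho))=1$ from Theorem \ref{thm:Z_eta+analytic+properties} part (1). This number depends only on $\mathcal{D}$ and the vertex weights, not on $\eta$, which gives the uniformity of $\rho$ claimed in the statement. The hypotheses of the corollary (strong connectivity, loops at every vertex, $n\geq2$, weights not all powers of a single integer) are exactly those of Theorem \ref{thm:Z_eta+analytic+properties}, including part (4).

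We use the form of Wiener--Ikehara stated in \cite[\S8.3]{montgomery2007multiplicative}: if $a_k\geq0$, the series $\sum a_kk^{-s}$ converges for $\Re(s)>\rho$, and $Z_\eta(s)-\frac{R}{s-\rho}$ extends continuously to the closed half-plane $\Re(s)\geq\rho$, then $\sum_{k\leq x}a_k\sim \frac{R}{\rho}x^{\rho}$. The hypotheses are checked as follows.
\begin{enumerate}
\item[(a)] Nonnegativity holds since the $a_k$ are counts, and they are finite by Lemma \ref{lem:basic+convergence}.
\item[(b)] Convergence on $\Re(s)>\rho$ is part (2) of Theorem \ref{thm:Z_eta+analytic+properties}.
\item[(c)] By part (3), $Z_\eta$ is meromorphic on $\mathbb{C}$, analytic on $\Re(s)>\rho$, and has a simple pole at $\rho$ with residue $R_\eta>0$. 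By part (4), it has no other pole on the line $\Re(s)=\rho$.
\end{enumerate}

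From (c), $Z_\eta(s)-R_\eta/(s-\rho)$ is meromorphic with no poles on $\Re(s)\geq\rho$, so it is analytic on an open neighborhood of that closed half-plane. The poles of a meromorphic function form a discrete set, so every point of the line $\Re(s)=\rho$ has a pole-free disc around it. In particular the function is continuous up to the boundary, and the Tauberian theorem yields
\[\#\{\gamma\in\mathcal{W}_\eta: d(\gamma)\leq x\}=\frac{R_\eta}{\rho}x^{\rho}+o(x^{\rho}).\]
We then set $c_\eta:=R_\eta/\rho>0$.

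I do not expect a genuine obstacle, since all the analytic input has already been established. The point requiring the most care is matching the precise hypotheses of the version of Wiener--Ikehara being cited. Some versions are stated for $\rho=1$ or for the summatory function $\sum_{k\leq x}a_k$ with a Stieltjes integral. If needed, I would reduce to the normalized case by the substitution $s=\rho u$, i.e.\ by considering the measure with distribution function $A(x^{1/\rho})$ where $A(x)=\sum_{k\leq x}a_k$. This reduction produces the constant $R_\eta/\rho$.
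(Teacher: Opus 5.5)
Your proposal is correct and matches the paper's proof: take $\rho$ from Theorem \ref{thm:Z_eta+analytic+properties}(1), use parts (2)--(4) to verify the hypotheses of the Wiener--Ikehara theorem (the paper cites Delange's generalization), and conclude with $c_\eta=R_\eta/\rho$. The only difference is cosmetic and lies in the reduction to the case $\rho=1$: you rescale $s=\rho u$, whereas the paper's alternative applies the $\rho=1$ result to $Z_\eta(s+\rho-1)=\sum_k a_kk^{1-\rho}k^{-s}$ and then uses partial summation, and both give the same constant.
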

\begin{proof} Let $\rho$ be as in Theorem \ref{thm:Z_eta+analytic+properties}, fix a walk $\eta$, and let $R_\eta=\textup{Res}(Z_\eta,\rho)>0$. By statements (2)--(4) of Theorem \ref{thm:Z_eta+analytic+properties}, $Z_\eta(s)=\sum_{k\geq1}a_kk^{-s}$ is a Dirichlet series with nonnegative coefficients that converges for $\Re(s)>\rho$, is analytic at every point of the line $\Re(s)=\rho$ other than $s=\rho$, and satisfies $Z_\eta(s)=R_\eta/(s-\rho)+H(s)$ near $s=\rho$ for some function $H$ analytic at $\rho$. Hence, Delange's generalization of the Wiener--Ikehara theorem \cite{delange1954generalisation} (see also \cite[Ch.~III, \S3]{narkiewicz1983number}) implies that
\[\#\{\gamma\in\mathcal{W}_\eta: d(\gamma)\leq x\}=\sum_{k\leq x}a_k=\frac{R_\eta}{\rho}\,x^{\rho}+o(x^{\rho})\]
as $x\rightarrow\infty$. Thus the claim holds with $c_\eta=R_\eta/\rho>0$. Alternatively, one may deduce this from the case $\rho=1$ in \cite[Corollary 8.8]{montgomery2007multiplicative} by applying it to $\sum_{k\geq1}a_kk^{1-\rho}k^{-s}=Z_\eta(s+\rho-1)$ and then using partial summation.
\end{proof}
Since every walk has a unique terminal vertex, we can easily deduce an asymptotic for the total number of walks of bounded weight by partitioning walks by their terminal vertex and then applying Corollary \ref{cor:count+walks+terminal+segment} to the walks of length $0$.
\begin{cor}\label{cor:count+total+walks} Let $\mathcal{D}=(\mathcal{V},\mathcal{E},\mu)$ be a finite directed multigraph with $n\geq2$ vertices, let $d:\mathcal{V}\rightarrow \mathbb{Z}_{\geq2}$, and extend $d$ multiplicatively to walks on $\mathcal{D}$. Moreover, assume that $\mathcal{D}$ is strongly connected, that there is an edge $v\rightarrow v$ for all $v\in\mathcal{V}$, and that $d(v_1),\dots,d(v_n)$ are not all powers of a single integer. Then there exist constants $c,\rho>0$ such that
\[\#\{\textup{walks $\gamma$}\,:\, d(\gamma)\leq x\}=cx^{\rho}+o(x^{\rho})\]
as $x\rightarrow\infty$.
\end{cor}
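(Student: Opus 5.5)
The plan is to partition the set of all walks according to their terminal vertex and then apply Corollary \ref{cor:count+walks+terminal+segment} to each piece separately. For each vertex $v_j\in\mathcal{V}$, let $\eta_j=((v_j),\varnothing)$ be the walk of length $0$ at $v_j$. The first step is to check that $\mathcal{W}_{\eta_j}$ is exactly the set of walks $\gamma$ with $\tau(\gamma)=v_j$. Indeed, $\eta_j$ is a terminal segment of $\gamma$ precisely when the last vertex of $\gamma$ is $v_j$, since there are no edges of $\eta_j$ to match. Because every walk has a unique terminal vertex, we obtain a disjoint decomposition
\[\{\textup{walks }\gamma\}=\bigsqcup_{j=1}^n\mathcal{W}_{\eta_j}.\]

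Next, the hypotheses of the present statement are identical to those of Corollary \ref{cor:count+walks+terminal+segment}. That corollary therefore supplies a single exponent $\rho>0$, independent of $\eta$ (it is the unique $\rho$ with $\textup{sr}(M(\rho))=1$ from Theorem \ref{thm:Z_eta+analytic+properties}). It also gives, for each $j$, a constant $c_{\eta_j}>0$ such that
\[\#\{\gamma\in\mathcal{W}_{\eta_j}: d(\gamma)\leq x\}=c_{\eta_j}x^\rho+o(x^\rho).\]

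Finally, we sum these $n$ asymptotics over $j$. Since the union is finite and disjoint, the error terms add to $o(x^\rho)$, and we obtain the claim with
\[c=\sum_{j=1}^n c_{\eta_j}>0.\]

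The only point requiring care is that the exponent $\rho$ is the same for every $\eta_j$, so that the main terms combine into a single power of $x$. This is guaranteed because $\rho$ depends only on $M(s)$, not on $\eta$. Beyond that, there is no real obstacle: the argument is a direct bookkeeping consequence of the preceding corollary.
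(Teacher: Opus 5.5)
Your proposal is correct and follows the paper's proof: you partition all walks by terminal vertex into the classes $\mathcal{W}_{\eta_j}$ for the length-zero walks $\eta_j$, apply Corollary~\ref{cor:count+walks+terminal+segment} with its exponent $\rho$ (the same for every $\eta$), and take $c=\sum_j c_{\eta_j}>0$.
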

\begin{proof} Let $\eta_i$ be the walk of length $0$ consisting of the vertex $v_i\in\mathcal{V}$, and let $\mathcal{W}$ be the set of all finite walks in $\mathcal{D}$. Then, since every walk has terminal segment $\eta_i$ for a unique $1\leq i\leq n$ (namely, the index of its terminal vertex), we have that $\mathcal{W}=\mathcal{W}_{\eta_1}\sqcup\dots\sqcup \mathcal{W}_{\eta_n}$. Now let $c_i=c_{\eta_i}$ be the constants from Corollary \ref{cor:count+walks+terminal+segment}. Then $c:=\sum_{i=1}^n c_i>0$ has the desired property.
\end{proof}

\section{Partially Commutative Semigroups}\label{sec:trace}
\subsection{The Cartier-Foata Normal Form}
In this section, we give a brief introduction to the theory of \emph{partially commutative semigroups}, focusing primarily on the material needed to introduce the Cartier-Foata normal form \cite{MR239978} and the corresponding directed graph of cliques; see \cite[\S8.2]{rozenberg2012handbook} or \cite{diekert1995book} for more complete introductions. In particular, we note that the associated monoids appear in the literature as free partially commutative monoids, trace monoids, Cartier–Foata monoids, graph monoids, or right-angled Artin monoids.   

Let $\mathcal{G}$ be a finite simple graph (no loops or multiple edges) with vertex set $V(\mathcal{G})=\{f_1,\dots,f_r\}$ and edge set $E(\mathcal{G})$. We define an associated semigroup $\mathcal{S}(\mathcal{G})$ given by the presentation,    
\[\mathcal{S}=\mathcal{S}(\mathcal{G}):=\Big\langle f_1,\dots,f_r\,\Big\vert\,\, f_if_j=f_jf_i\;\, \text{whenever}\;\, \{f_i,f_j\}\in E(\mathcal{G})  \Big\rangle,\]
and call $\mathcal{S}$ the \emph{partially commutative semigroup} associated to $\mathcal{G}$ and call $\mathcal{G}$ the \emph{commutation graph} of $S$. We often simply refer to the pair $(\mathcal{S},\mathcal{G})$ as a partially commutative semigroup. Formally, the construction of $\mathcal{S}$ is as follows: let $A=\{f_1,\dots,f_r\}$ be an alphabet, let $A^+$ be the \emph{free semigroup} generated by $A$, and let $\equiv_{R}$ denote the congruence (equivalence relation compatible with concatenation) on $A^+$ generated by the relation 
\[R=R(\mathcal{G}):=\Big\{\big(f_i f_j,f_j f_i\big)\in A^+\times A^+: \{f_i,f_j\}\in E(\mathcal{G})\Big\}.\] 
Then $\mathcal{S}$ is the quotient semigroup $\mathcal{S}:=A^+/\equiv_R$; see, for instance, \cite[\S1.5-\S1.6]{howie_fundamentals} for introductions to congruences (generated by relations) and presentations of semigroups. 

\begin{remark}\label{rmk:G+vs+barG}
In the literature, the complement graph $\overline{\mathcal{G}}$, which determines an associated symmetric and reflexive (dependence) relation, is instead used to define $\mathcal{S}$. Nevertheless, since the semigroups we study in this paper consist of self maps of a variety, and given the ubiquity of commuting maps in dynamics (going back to Fatou \cite{fatou1923substitutions}, Ritt \cite{ritt1923permutable}, and Julia \cite{julia1922memoire}), we find the commutation graph $\mathcal{G}$ a more natural object for our purposes. 
\end{remark}
Likewise, for both reasons of convenience and to match preexisting literature, where monoids (semigroups with an identity element) are used instead of semigroups, we let 
\[\mathcal{M}=\mathcal{M}(\mathcal{G}):=\mathcal{S}(\mathcal{G})^1=\mathcal{S}(\mathcal{G})\sqcup\{1\}\] 
and declare that $1\cdot F=F=F\cdot1$ for all $F\in \mathcal{M}$. In particular, $\mathcal{M}(\mathcal{G})$ is a monoid containing $\mathcal{S}=\mathcal{S}(\mathcal{G})$, and we call the pair $(\mathcal{M},\mathcal{G})$ the \emph{partially commutative monoid} with commutation graph $\mathcal{G}$. As with $\mathcal{S}$, we may identify $\mathcal{M}$ as a quotient, $\mathcal{M}\cong A^*/\equiv_R$, where $A^*$ is the free monoid generated by $A$ and $\equiv_R$ is the congruence generated by the relation $R$, in this case viewed as a relation on $A^*$. This identification holds since the relations in $R$ preserve length; in particular, the empty word $\epsilon$ is alone in its class and $\mathcal{S}$ has no identity element, so $\mathcal{M}$ coincides with what is written $\mathcal{S}^1:=\mathcal{S}\sqcup\{1\}$ in the literature and in Section \ref{sec:semigroups}.    
\begin{remark}\label{rmk:graph+ops}
We note the following simple facts relating graph operations and semigroup operations. Let $\mathcal{G}_1$ and $\mathcal{G}_2$ be finite simple graphs on disjoint vertex sets, write $\mathcal{G}_1\sqcup\mathcal{G}_2$ for their disjoint union and $\mathcal{G}_1\vee\mathcal{G}_2$ for their join (the disjoint union together with every edge joining $V(\mathcal{G}_1)$ to 
$V(\mathcal{G}_2)$). Then 
\[
\mathcal{M}(\mathcal{G}_1\sqcup\mathcal{G}_2)\;\cong\;\mathcal{M}(\mathcal{G}_1)*\mathcal{M}(\mathcal{G}_2)
\qquad\text{and}\qquad 
\mathcal{M}(\mathcal{G}_1\vee\mathcal{G}_2)\;\cong\;\mathcal{M}(\mathcal{G}_1)\times \mathcal{M}(\mathcal{G}_2),
\]
where $*$ and $\times$ denote the free product and the direct product of monoids. Both are immediate from the presentations: in the first case $R(\mathcal{G}_1\sqcup\mathcal{G}_2)=R(\mathcal{G}_1)\sqcup R(\mathcal{G}_2)$, so that a homomorphism from $\mathcal{M}(\mathcal{G}_1\sqcup\mathcal{G}_2)$ to a monoid $N$ is precisely a pair of homomorphisms $\mathcal{M}(\mathcal{G}_i)\to N$, which is the universal property of the free product; in the second case the added relations sort each word uniquely into a product of a word in $A_1^*$ and a 
word in $A_2^*$. Both identities are instances of the theory of \emph{graph products}, of which the free product and the direct product are the extreme cases (corresponding to edgeless and complete graphs); see \cite{dandan2023graph}.
\end{remark}
Next, we call a non-empty subset $\mathfrak{c}\subseteq V(\mathcal{G})$ a \emph{clique} if $\{f,f'\}\in E(\mathcal{G})$ for all $f\neq f'$ in $\mathfrak{c}$. Equivalently, cliques correspond to the set of vertices of a complete subgraph of $\mathcal{G}$. Moreover, since the elements of a clique $\mathfrak{c}=\{f_{i_1},\dots,f_{i_k}\}$ commute, we let $\bar{\mathfrak{c}}=f_{i_1}\cdots f_{i_k}\in \mathcal{S}$ be the corresponding element of $\mathcal{S}$, which is formally the image of the word $f_{i_1}\cdots f_{i_k}\in A^+$ in the quotient $\mathcal{S}=A^+/\equiv_R$. Lastly, we say that an ordered pair of cliques $(\mathfrak{c},\mathfrak{c}')$ is \emph{admissible} if for all $f'\in\mathfrak{c}'$ there is an $f\in \mathfrak{c}$ such that $\{f,f'\}\not\in E(\mathcal{G})$. In this case, we write $\mathfrak{c}\rightarrow\mathfrak{c}'$ when the pair $(\mathfrak{c},\mathfrak{c}')$ is admissible. With these notions in place, we have the following fundamental result of Cartier-Foata \cite[Theorem 1.2]{MR239978}, which says that every element of $\mathcal{S}$ can be written uniquely as a product of admissible cliques: 
\begin{thm}\label{thm:CF+NF}[Cartier-Foata Normal-Form] Let $(\mathcal{S},\mathcal{G})$ be a partially commutative semigroups and let $\Gamma$ be the finite set of cliques of $\mathcal{G}$. Then every element $F\in \mathcal{S}$ can be written uniquely in the form
\begin{equation}\label{eq:CF+NF}
F=\overline{\mathfrak{c}_1}\cdots \overline{\mathfrak{c}_n}
\end{equation}
for some $n\geq1$ and some $\mathfrak{c}_1,\dots,\mathfrak{c}_n\in\Gamma$ with $\mathfrak{c}_i\rightarrow\mathfrak{c}_{i+1}$ for all $1\leq i\leq n-1$ when $n\geq2$.    
\end{thm}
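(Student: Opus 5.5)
We prove existence and uniqueness of the Cartier--Foata form separately, working in the monoid $\mathcal{M}=A^*/\equiv_R$. The tool is the standard invariant of a trace. Since the defining relations only swap adjacent commuting letters, for every $F\in\mathcal{S}$ the following are well defined: the multiset of letters of $F$, and for each pair of non-commuting letters $f_i,f_j$ (including $i=j$), the projection of a word representing $F$ onto the subalphabet $\{f_i,f_j\}$. Indeed, a swap $f_af_b\to f_bf_a$ with $\{f_a,f_b\}\in E(\mathcal{G})$ never changes the relative order of two letters that do not commute.

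\emph{Existence.} We induct on word length, peeling off the leftmost layer. For $F\in\mathcal{S}$, let $\mathrm{Init}(F)$ be the set of letters $f$ with $F=fF'$ for some $F'\in\mathcal{M}$. The first key claim is that $\mathrm{Init}(F)$ is a clique. To see this, suppose $f,f'\in\mathrm{Init}(F)$ are distinct and do not commute. In the projection onto $\{f,f'\}$, both letters would then have to occur first, which is impossible. The second claim is that $F=\overline{\mathfrak{c}_1}F'$ with $\mathfrak{c}_1=\mathrm{Init}(F)$. This follows because letters of $\mathrm{Init}(F)$ pairwise commute, so each can be moved to the front successively (an occurrence that can be brought to the front remains so after removing another commuting initial letter). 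Applying induction to $F'$ (of smaller length) gives $F'=\overline{\mathfrak{c}_2}\cdots\overline{\mathfrak{c}_n}$ with $\mathfrak{c}_2=\mathrm{Init}(F')$. It remains to check admissibility $\mathfrak{c}_1\to\mathfrak{c}_2$. Suppose some $f'\in\mathfrak{c}_2$ commuted with every element of $\mathfrak{c}_1$. Then $f'$ could be moved to the front of $F=\overline{\mathfrak{c}_1}f'F''$, so $f'\in\mathrm{Init}(F)=\mathfrak{c}_1$. But then $f'$ commutes with itself as a member of $\mathfrak{c}_1$, which the simple graph forbids, since $f'\in\mathfrak{c}_1$ is not adjacent to itself. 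This contradiction gives admissibility.

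\emph{Uniqueness.} This is the main step. Given any admissible factorization $F=\overline{\mathfrak{c}_1}\cdots\overline{\mathfrak{c}_n}$, we show $\mathfrak{c}_1=\mathrm{Init}(F)$. Uniqueness then follows by induction, using left cancellativity of $\mathcal{M}$. Left cancellativity itself follows from the projection invariants: the pairwise projections together with the letter counts determine the trace, which is the standard projection lemma and is proven by induction on length.

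The inclusion $\mathfrak{c}_1\subseteq\mathrm{Init}(F)$ is clear. For the reverse inclusion, take $f\in\mathrm{Init}(F)$ and let $k$ be the first index with $f\in\mathfrak{c}_k$. If $k>1$, admissibility of $\mathfrak{c}_{k-1}\to\mathfrak{c}_k$ gives some $g\in\mathfrak{c}_{k-1}$ not commuting with $f$ (possibly $g=f$, which is excluded by the choice of $k$, so $g\neq f$). In the $\{f,g\}$-projection of the word $\mathfrak{c}_1\cdots\mathfrak{c}_n$, this occurrence of $g$ precedes every occurrence of $f$. This contradicts $f$ being able to appear first in that projection. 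Hence $k=1$ and $\mathfrak{c}_1=\mathrm{Init}(F)$.

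The expected obstacle is rigorously justifying the projection invariants and the resulting cancellation. Our plan is to cite the projection lemma from \cite{diekert1995book}, which gives injectivity of the map $F\mapsto(\text{all pairwise projections})$, rather than reprove it.
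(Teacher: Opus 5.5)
Your argument is correct, but it is not the paper's route. The paper gives no argument for this statement and simply quotes Cartier--Foata \cite[Theorem 1.2]{MR239978}. You instead give a self-contained proof: you identify the first clique intrinsically as $\mathfrak{c}_1=\mathrm{Init}(F)$ and peel it off, using projections onto pairs of non-adjacent (or equal) letters as the only invariant.

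Nearly everything you do needs only the easy direction of that invariant, namely that the projections are unchanged by a swap of adjacent commuting letters. This covers the following steps:
\begin{itemize}
\item that $\mathrm{Init}(F)$ is a clique;
\item that any admissible factorization has $\mathfrak{c}_1=\mathrm{Init}(F)$;
\item that the factorization you construct is admissible.
\end{itemize}

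Your parenthetical claim that the letters of $\mathrm{Init}(F)$ can be pulled to the front one after another should be justified with the same tool. Suppose $F=f_1\cdots f_jG$ with $f_{j+1}\in\mathrm{Init}(F)$ adjacent to $f_1,\dots,f_j$. If some letter $g$ preceding the first $f_{j+1}$ in a word for $G$ were not adjacent to $f_{j+1}$, then the $\{g,f_{j+1}\}$-projection of $F$ would begin with $g$. That contradicts $F=f_{j+1}H$, so $f_{j+1}$ can be swapped to the front of $G$.

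The hard direction of the projection lemma enters only through left cancellativity. Even that can be had elementarily: deleting the first occurrence of a fixed letter $f$ is compatible with the swap relations and sends $fu$ to $u$.

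What your version buys is a proof, inside the paper, of the fact on which Proposition~\ref{prop:CF+implies+suffix+closed+directed+graph+walks} rests, and hence the bijection between $\mathcal{S}$ and walks on the clique graph. It also checks the paper's own conventions directly: the commutation graph rather than a dependence relation, and $\mathfrak{c}_1$ as the set of letters that can occur leftmost, which matches the stated orientation of $\mathfrak{c}\rightarrow\mathfrak{c}'$. The citation buys brevity.
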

In particular, the Cartier-Foata normal form supplies the bridge we use to apply the main results of Section \ref{sec:d+to+h} and Section \ref{sec:counting+walks}.  
\begin{prop}\label{prop:CF+implies+suffix+closed+directed+graph+walks} Let $(\mathcal{S},\mathcal{G})$ be a partially commutative semigroups and let $\Gamma$ be the finite set of cliques of $\mathcal{G}$. Then the following statements hold: 
\vspace{.05cm} 
\begin{enumerate}
\item[\textup{(1)}] The set $\mathcal{N}\subseteq\Gamma^*$ of all possible words $\mathfrak{c}_1\dots\mathfrak{c}_n$ with $\mathfrak{c}_i\rightarrow\mathfrak{c}_{i+1}$ for all $1\leq i\leq n-1$ when $n\geq2$ together with the empty word is a suffix closed normal form on $\mathcal{M}(\mathcal{S})$.\vspace{.2cm}  
\item[\textup{(2)}] Let $\mathcal{D}=(\Gamma,\rightarrow)$ be the directed graph of cliques with edges $\mathfrak{c}\rightarrow \mathfrak{c}'$ given by admissible pairs. Then the map $F\rightarrow w(F)$ given by 
\[w(F)=(\mathfrak{c}_1,\dots,\mathfrak{c}_n)\]
and induced by \eqref{eq:CF+NF} gives a bijection between $\mathcal{S}$ and non-empty walks in $\mathcal{D}$. 
\vspace{.15cm} 
\item[\textup{(3)}] Every vertex $\mathfrak{c}$ of $\mathcal{D}$ has a loop, i.e., $\mathfrak{c}\rightarrow\mathfrak{c}$ is admissible for all $\mathfrak{c}\in\Gamma$.
\vspace{.15cm} 
\item[\textup{(4)}] If the complement of the commutation graph $\overline{\mathcal{G}}$ is connected, then the directed graph $\mathcal{D}$ of cliques is strongly connected.   
\end{enumerate}
\end{prop}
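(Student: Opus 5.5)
Statements (1) and (2) are essentially a repackaging of Theorem \ref{thm:CF+NF}, so I will dispose of them first. For (1), $\pi:\Gamma^*\to\mathcal{M}$ sends a clique $\mathfrak{c}$ to $\bar{\mathfrak{c}}$ and the empty word to $1$. Theorem \ref{thm:CF+NF} states exactly that $\pi|_{\mathcal{N}\setminus\{\epsilon\}}$ is a bijection onto $\mathcal{S}$. The empty word is the only word mapping to $1$, since $\mathcal{S}$ has no identity and the relations preserve length. Hence $\pi|_{\mathcal{N}}$ is a bijection onto $\mathcal{M}=\mathcal{S}^1$. Suffix-closure is immediate: if $\mathfrak{c}_1\cdots\mathfrak{c}_n\in\mathcal{N}$, then each suffix $\mathfrak{c}_k\cdots\mathfrak{c}_n$ still satisfies $\mathfrak{c}_i\rightarrow\mathfrak{c}_{i+1}$ for consecutive indices. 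For (2), a nonempty walk in the simple digraph $\mathcal{D}$ is the same thing as a vertex sequence $(\mathfrak{c}_1,\dots,\mathfrak{c}_n)$ with $\mathfrak{c}_i\rightarrow\mathfrak{c}_{i+1}$, because there is at most one edge between any two cliques. This is precisely a nonempty word in $\mathcal{N}$, so (2) follows from (1).

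For (3), take $f'\in\mathfrak{c}$. I need some $f\in\mathfrak{c}$ with $\{f,f'\}\notin E(\mathcal{G})$, and $f=f'$ works because $\mathcal{G}$ is simple and has no loops. This matches the convention that $\overline{\mathcal{G}}$ has a loop at every vertex.

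The main step is (4). My plan is to route walks through singleton cliques. First, $\{f\}\rightarrow\{f'\}$ is admissible exactly when $\{f,f'\}\notin E(\mathcal{G})$, that is, when $f,f'$ are adjacent (or equal) in $\overline{\mathcal{G}}$. So connectivity of $\overline{\mathcal{G}}$ gives a walk between any two singleton cliques. Second, from an arbitrary clique $\mathfrak{c}$ there is an edge $\mathfrak{c}\rightarrow\{f\}$ for any $f\in\mathfrak{c}$, since $f$ does not commute with itself in the sense of $E(\mathcal{G})$.

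The remaining difficulty is reaching an arbitrary clique $\mathfrak{c}'$. An edge $\{f\}\rightarrow\mathfrak{c}'$ requires $f$ to be non-adjacent in $\mathcal{G}$ to every element of $\mathfrak{c}'$, and in general no single vertex does this. I will instead enter $\mathfrak{c}'$ from the clique $\mathfrak{c}'$ itself, using the loop from (3), and reach $\mathfrak{c}'$ by growing cliques. Let $\mathfrak{c}'=\{g_1,\dots,g_k\}$. The edge $\{g_1\}\rightarrow\{g_1,\dots,g_k\}$ fails only because $g_2,\dots,g_k$ commute with $g_1$. So I use an intermediate clique instead: the predecessor must contain, for each $g_j$, some element not commuting with $g_j$. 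The set $\mathfrak{c}'$ itself has this property, but it is the target. The cleaner route is to find any clique $\mathfrak{b}$ with $\mathfrak{b}\rightarrow\mathfrak{c}'$ and to show $\mathfrak{b}$ is reachable. By induction on clique size, $\mathfrak{b}=\mathfrak{c}'$ is circular, so I would argue via a non-clique word instead. The element $F=\overline{\mathfrak{c}'}\cdot \bar g$ has a normal form whose last clique contains enough to force $\mathfrak{c}'$ as the preceding clique. Here $\bar g$ is a suitable product of letters chosen along $\overline{\mathcal{G}}$-paths.

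Concretely, I expect to handle it this way. For a singleton $\{f\}$ and target clique $\mathfrak{c}'$, choose for each $g_j\in\mathfrak{c}'$ a path in $\overline{\mathcal{G}}$ from $f$ to $g_j$. Then form the element $G=\overline{\mathfrak{c}'}\,W$, where $W$ is the product of these paths read so that each $g_j$ is followed by its path back to $f$. Its Cartier–Foata normal form is a walk in $\mathcal{D}$ that starts at the clique containing all the $g_j$ (all of $\mathfrak{c}'$) and ends at $\{f\}$, or at a clique containing $f$. Checking that the first clique of this normal form is exactly $\mathfrak{c}'$, and that the walk can be ended at the desired singleton, is the obstacle. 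I would verify it using the standard "leftmost letters" description of the first Cartier–Foata factor: it is the set of letters that can be moved to the front. Reversing direction gives walks from singletons to $\mathfrak{c}'$, since a walk $\mathfrak{c}'\to\cdots$ read backwards is not a walk. So I would instead set $G=W'\,\overline{\mathfrak{c}'}$ with $W'$ built from paths from $f$ to each $g_j$. I would then show that the last clique of its normal form is $\mathfrak{c}'$ and that the first clique contains $f$, and finally connect via singletons as above.
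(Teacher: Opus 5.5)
Your arguments for (1)--(3) are correct and match the paper's. In (4), the singleton-to-singleton edges and the edges $\mathfrak{c}\to\{f\}$ are also correct. The gap is the step you flag yourself: reaching a clique $\mathfrak{c}'$ with $|\mathfrak{c}'|\geq 2$.

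\textbf{The gap.} The plan with $G=W'\,\overline{\mathfrak{c}'}$ is never carried out, and the check you defer is the original problem in disguise. By uniqueness of the Cartier--Foata form and right cancellation, the last clique of $\textup{nf}(W'\,\overline{\mathfrak{c}'})$ is $\mathfrak{c}'$ exactly when the last clique of $\textup{nf}(W')$ has an edge to $\mathfrak{c}'$. In other words, you still need a reachable in-neighbour of $\mathfrak{c}'$.

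For a plain product of $\overline{\mathcal{G}}$-paths the claim fails, because the copies of the $g_j$ land at different levels. For example, let $E(\mathcal{G})=\{\{a,b\},\{b,c\},\{c,d\}\}$, $f=a$ and $\mathfrak{c}'=\{b,c\}$. Take $W'=adbc$, built from the $\overline{\mathcal{G}}$-paths $a,d,b$ and $a,c$. Then $\textup{nf}(W'bc)=\{a\}\{c,d\}\{b,c\}\{b\}$, whose last clique is $\{b\}$, not $\mathfrak{c}'$.

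Also, ``the first clique contains $f$'' would not suffice. The other members of a clique $\mathfrak{c}_1\ni f$ commute with $f$, so $\{f\}\to\mathfrak{c}_1$ holds only if $\mathfrak{c}_1=\{f\}$.

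\textbf{A way to close it.} Show by induction on $|I|$ that every clique $I$ is reachable from a singleton. Write $N[x]$ for the set of letters $y$ with $\{x,y\}\notin E(\mathcal{G})$, so $x\in N[x]$.

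If $|I|\geq 2$, pick distinct $g,g'\in I$ and a shortest path $g=x_0,x_1,\dots,x_m=g'$ in $\overline{\mathcal{G}}$. Since $g,g'$ commute, $m\geq 2$. Put $I_0=I$ and $I_t=\{x_t\}\cup(I_{t-1}\setminus N[x_t])$ for $1\leq t\leq m-1$. Then:
\begin{enumerate}
\item[(a)] Each $I_t$ is a clique, since its other elements commute with $x_t$ and with each other.
\item[(b)] $I_t\to I_{t-1}$ is an edge: elements of $I_{t-1}\cap N[x_t]$ are covered by $x_t$, and the rest by themselves.
\item[(c)] $|I_t|\leq |I_{t-1}|$, because $x_{t-1}\in I_{t-1}\cap N[x_t]$.
\item[(d)] By minimality of the path, $g'\notin N[x_t]$ for $t\leq m-2$, so $g'\in I_{m-2}$.
\end{enumerate}
Since $x_{m-2}$ and $g'$ are distinct elements of $I_{m-2}\cap N[x_{m-1}]$, we get $|I_{m-1}|<|I|$, so the induction applies. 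Combined with your observations about singletons, this gives strong connectivity.

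For comparison, the paper does not prove (4) at all; it cites \cite[Lemma 3.2]{KrobMairesseMichos2003}.
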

\begin{remark}\label{rmk:edge+drop} Since $\mathcal{D}$ has at most one edge from each pair of vertices, a walk in the sense of Section \ref{sec:counting+walks} is determined completely by its vertex sequence alone, and we suppress the edge sequence when defining $w(F)$ in Proposition \ref{prop:CF+implies+suffix+closed+directed+graph+walks}.    
\end{remark}
\begin{proof}
Statement (1) follows from the bijective correspondence in Theorem \ref{thm:CF+NF} and the fact that if $\mathfrak{c}_1\rightarrow \dots\rightarrow\mathfrak{c}_n$ is an admissible sequence for some $n\geq2$ then $\mathfrak{c}_k\rightarrow \dots\rightarrow\mathfrak{c}_n$ is an admissible sequence for all $1\leq k\leq n-1$. Statement (2) is simply a restatement of Theorem \ref{thm:CF+NF}; see also Remark \ref{rmk:edge+drop} above. For statement (3), note that if $f\in\mathfrak{c}$ then $\{f,f\}\not\in E(\mathcal{G})$, since $\mathcal{G}$ is assumed to be a simple graph (so $\mathcal{G}$ has no loops). Hence, $\mathfrak{c}\rightarrow\mathfrak{c}$ is admissible by definition. Lastly, statement (4) is well known; see for instance \cite[Lemma 3.2]{KrobMairesseMichos2003}.       
\end{proof}
\subsection{Proof of Main Theorem}
We now have the pieces in place to prove our main result from the Introduction \ref{sec:intro}. In what follows, we assume that $(\mathcal{S},\mathcal{G})$ is a partially commutative semigroup, that $(\mathcal{D},\rightarrow)$ is the corresponding directed graph of cliques, and that $d: \mathcal{S}\rightarrow\mathbb{Z}_{\geq2}$ is a multiplicative function. Moreover, we assume that the complement graph $\overline{\mathcal{G}}$ is connected and that the images $d(\mathcal{S})$ is not contained in a cyclic, multiplicative subsemigroup of $\mathbb{Z}_{\geq2}$. Concretely, if $\mathcal{S}$ is generated by $f_1,\dots, f_r$, then we assume that the $d(f_1),\dots,d(f_r)$ are not all powers of a single integer $d\geq2$. 
\begin{remark}\label{rmk:at+least+2+cliques} In particular, it must be the case that $\mathcal{D}$ has at least $2$ vertices. For if not, then $\mathcal{D}$ has a single vertex. Equivalently, a generating set of $\mathcal{S}$ forms a single clique. But then $\mathcal{S}$ is commutative and $\mathcal{G}(\mathcal{S})=K_r$ for some $r\geq1$. However, in this case $\overline{\mathcal{G}(\mathcal{S})}=\overline{K_r}=E_r$ where $E_r$ is the empty graph with $r$ vertices and no edges. This is clearly disconnected unless $r=1$. But then $d(\mathcal{S})=\langle d(f)\rangle$ for some generator $f$ of $\mathcal{S}$, a contradiction.        
\end{remark}
\begin{proof}[(Proof of Theorem \ref{thm:main})] 
Note first that we may extend $d$ to a function on cliques (i.e., the vertices of $\mathcal{D}$), by defining $d(\mathfrak{c}):=d(\bar{\mathfrak{c}})=\prod_{f\in\mathfrak{c}}d(f)$. Likewise, we may extend $d$ to walks in $\mathcal{D}$ multiplicatively as in Section \ref{sec:counting+walks}. Now for $F\in \mathcal{S}$ we let $w(F)$ be the walk in $\mathcal{D}$ given by the Cartier-Foata normal form of $F$ as in Proposition \ref{prop:CF+implies+suffix+closed+directed+graph+walks} part (2). Then we see that 
\begin{equation}\label{eq:trivial+d}
d(F)=d(\overline{\mathfrak{c}_1}\cdots \overline{\mathfrak{c}_n})=\prod_{i=1}^nd(\overline{\mathfrak{c}_i})=\prod_{i=1}^nd(\mathfrak{c}_i)=d(w(F))
\end{equation}
since $d$ is multiplicative (and by definition of $d$ on the various objects). In particular, under the bijection in Proposition \ref{prop:CF+implies+suffix+closed+directed+graph+walks} part (2), we have that 
\begin{equation}\label{eq:count+in+S+eq+count+walks}
\#\{F\in \mathcal{S}\,:\, d(F)\leq x\}=\#\{\text{walks $w$ in $\mathcal{D}$}\,:\, d(w)\leq x\}. 
\end{equation}
On the other hand, parts (3) and (4) of Proposition \ref{prop:CF+implies+suffix+closed+directed+graph+walks} and Corollary \ref{cor:count+total+walks} imply that
\begin{equation}\label{eq:asymp+walks}
    \#\{\text{walks $w$ in $\mathcal{D}$}\,:\, d(w)\leq x\}=cx^{\rho}+o(x^{\rho})
\end{equation}
for some constants $c,\rho>0$. In particular, parts (1), (2), and (3) of Theorem \ref{thm:main} follow from \eqref{eq:count+in+S+eq+count+walks}, \eqref{eq:asymp+walks}, and the fact that the corresponding maps on endomorphisms are multiplicative.

As for the proof of the claim about heights, we note that $\mathcal{S}$ has a suffix closed normal form by Proposition \ref{prop:CF+implies+suffix+closed+directed+graph+walks} part (1). Moreover, if $\mathcal{S}$ is a polarized set of endomorphisms on a variety $V$ with respect to the ample sheaf $\mathcal{L}$ and $P\in V$ is not a preperiodic point, then the pair of functions $(d,h)$ given by $F^*(\mathcal{L})\cong \mathcal{L}^{\otimes d(F)}$ and $h(F)=h_{\mathcal{L}}(F(P))$ satisfy hypotheses (1) and (2) of Theorem \ref{thm:d+to+h} by Lemma \ref{lem:(d,h)+preperiodic}. Therefore, it remains to show that hypothesis (3) of Theorem \ref{thm:d+to+h} holds to prove the theorem. 

To see this, we show that $\mathcal{C}_G$ from Section \ref{sec:d+to+h} is in bijection with $\mathcal{W}_{w(G)}$ from Section \ref{sec:counting+walks}. This is essentially a tautology, up to a transfer of language. Suppose first that $G\in \mathcal{S}$ and write $G=\overline{\mathfrak{c}_1}\cdots \overline{\mathfrak{c}_k}$ for some unique cliques $\mathfrak{c}_i$ satisfying $\mathfrak{c}_i\rightarrow \mathfrak{c}_{i+1}$ using the Cartier-Foata normal form. For consistency with Section \ref{sec:d+to+h}, we write $\textup{nf}(G)=\mathfrak{c}_1\cdots \mathfrak{c}_k\in \Gamma^+$. Then, if $F\in \mathcal{S}$ satisfies $\textup{nf}(G)\leq \textup{nf}(F)$ with $F\neq G$, we have that 
\[F=\overline{\mathfrak{e}_1}\cdots \overline{\mathfrak{e}_m}\;\overline{\mathfrak{c}_1}\cdots \overline{\mathfrak{c}_k}\]
for some unique cliques $\mathfrak{e}_j$ satisfying $\mathfrak{e}_j\rightarrow \mathfrak{e}_{j+1}$ and $\mathfrak{e}_m\rightarrow\mathfrak{c}_1$. Hence, 
\[w(F)=(\mathfrak{e}_1,\dots,\mathfrak{e}_m,\mathfrak{c}_1,\dots,\mathfrak{c}_k)=w(\mathfrak{e}_1,\dots,\mathfrak{e}_m)\cdot w(G)\in \mathcal{W}_{w(G)},\]
where $\cdot$ is the concatenation operation on walks and $\mathcal{W}_{w(G)}$ is the set of walks with terminal segment $w(G)$, both defined in Section \ref{sec:counting+walks}. Similarly, if $\gamma\in \mathcal{W}_{w(G)}\setminus\{w(G)\}$ then we can write $\gamma=\omega\cdot w(G)$ for some non-empty walk $\omega=(\mathfrak{e}_1,\dots,\mathfrak{e}_m)$ with edges $\mathfrak{e}_j\rightarrow \mathfrak{e}_{j+1}$ and  $\mathfrak{e}_m\rightarrow\mathfrak{c}_1$. Then it follows that 
\[w^{-1}(\gamma)=\overline{\mathfrak{e}_1}\cdots \overline{\mathfrak{e}_m}\;\overline{\mathfrak{c}_1}\cdots \overline{\mathfrak{c}_k}\in\mathcal{C}_G,\]
since $\textup{nf}(w^{-1}(\gamma))=\mathfrak{e}_1\dots\mathfrak{e}_m\mathfrak{c}_1\dots\mathfrak{c}_k\in\Gamma^+$ (by the uniqueness of normal form) so $\textup{nf}(G)=\mathfrak{c}_1\dots\mathfrak{c}_k\leq \textup{nf}(F)$. Hence, we deduce that the restriction map, 
\[w\big\vert_{\mathcal{\mathcal{C}_G}}:\mathcal{C}_G\rightarrow\mathcal{W}_{w(G)},\]
is a bijection. In particular, \eqref{eq:trivial+d} and Corollary \ref{cor:count+walks+terminal+segment} together imply that 
\[\#\{F\in \mathcal{C}_G\,:\, d(F)\leq x\}=\{\gamma\in \mathcal{W}_{w(G)}\,:\, d(\gamma)\leq x\}=c_Gx^{\rho}+o(x^{\rho})\]
for some $c_G>0$ and all $G\in \mathcal{S}$; here we use that $\mathcal{D}$ is strongly connected and has loops by (3) and (4) of Proposition \ref{prop:CF+implies+suffix+closed+directed+graph+walks}. Therefore, all three hypotheses of Theorem \ref{thm:d+to+h} are met, so that 
\[\{F\in \mathcal{S}\,:\, h_{\mathcal{L}}(F(P))\leq x\}=c(P,\mathcal{L})x^{\rho}+o(x^{\rho})\]
for some constant $c(P,\mathcal{L})>0$ as claimed; here the constant $c(P,\mathcal{L})>0$ depends on $P$ and $\mathcal{L}$ since the function $h:\mathcal{S}\rightarrow\mathbb{R}_{\geq0}$ given by $h(F)=h_{\mathcal{L}}(F(P))$ depends on $P$ and $\mathcal{L}$.   
\end{proof}
\section{Representing Semigroups by Endomorphisms}\label{sec:representations}
Having established asymptotic growth rates for abstract partially commutative semigroups, we now turn to the problem of constructing such semigoups using endomorphisms of varieties. In particular, we begin with the following general question: 
\begin{question}\label{quest:representation} Let $K$ be a field of characteristic zero and let $\mathcal{S}$ be a finitely generated, partially commutative semigroup. When does there exist a variety $V_{/K}$, an ample invertible sheaf $\mathcal{L}_{/K}$ on $V$, and endomorphisms $f_1,\dots, f_n$ of $V$ all defined over $K$ and all polarized with respect to $\mathcal{L}$ such that $\langle f_1,\dots,f_n\rangle\cong \mathcal{S}$.    
\end{question}
\begin{defin} We say that a graph $\mathcal{G}$ is \emph{representable} over $K$ if there exist $V$, $\mathcal{L}$, and a partially commutative semigroup $\mathcal{S}=\langle f_1,\dots,f_n\rangle$ as in Question \ref{quest:representation} such that $\mathcal{G}(\mathcal{S})=\mathcal{G}$.     
\end{defin}
As we will see, in dimension one the number of partially commutative semigroups is limited to free products of free commutative semigroups (equivalently, the only possible commutation graphs are disjoint unions of complete subgraphs). Nevertheless, in higher dimensions more possibilities open up. For example, we construct polarizable semigroups of endomorphisms of $V=E\times E\times E$ where $E$ is an elliptic curve that are partially commutative with commutation graph $P_3$, the line graph with $3$ vertices; see Proposition \ref{prop:abelian+examples} below. To do this, we construct semigroups in $\textup{Mat}_{3\times 3}(\mathbb{Z})$ with the desired properties (although the polarization conditions complicates things). On the other hand, the possible partially commutative semigroups in $\textup{Mat}_{n\times n}(\mathbb{Z})$ are also limited; for example,  in $\textup{Mat}_{2\times 2}(\mathbb{Z})$ any partially commutative semigroup is the direct product of a free commutative semigroup with a free product of free commutative semigroups; see \cite{MR1029251}. Nevertheless, the evidence so far suggests that the number of representable partially commutative semigroups should increase with the dimension of the underlying variety.
\subsection{A Ping-Pong Lemma for Subsemigroups}
To construct examples of partially commutative semigroups of endomorphisms, we use the following result, which is a straightforward adaption of the standard ping-pong lemma for elements; see, for instance, \cite[\S II.B]{de2000topics}. This may be well known to the experts, but without a suitable reference, we state and prove it here:
\begin{lem}[Ping-pong]\label{lem:ping+pong}
Let $\mathcal{S}$ be a semigroup, let $n\geq 2$, and let $\mathcal{S}_1,\dots,\mathcal{S}_n$ be subsemigroups of $\mathcal{S}$ that together generate $\mathcal{S}$, i.e., $\mathcal{S}=\langle \mathcal{S}_1\cup\dots\cup\mathcal{S}_n\rangle$. Suppose that $\mathcal{S}$ acts on the left on a set $X$, and that there are pairwise disjoint subsets $X_1,\dots,X_n$ of $X$ and a point $x_0\in X\setminus\bigcup_{i} X_i$ such that, setting $Y_i:=\{x_0\}\cup\bigcup_{j\neq i}X_j$, the following hold for all $1\leq i\leq n$:
\vspace{.15cm}
\begin{enumerate}
\item[\textup{(1)}] $\mathcal{S}_i\cdot Y_i\subseteq X_i$;
\vspace{.15cm}
\item[\textup{(2)}] the map $\mathcal{S}_i\times Y_i\to X$ given by $(s,y)\mapsto s\cdot y$ is injective.
\vspace{.15cm}
\end{enumerate}
Then the canonical map $\pi\colon\mathcal{S}_1*\dots*\mathcal{S}_n\to\mathcal{S}$ is a semigroup isomorphism.
\end{lem}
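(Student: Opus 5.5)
Surjectivity of $\pi$ is automatic, because $\mathcal{S}$ is generated by $\mathcal{S}_1\cup\dots\cup\mathcal{S}_n$. The work is injectivity. I will represent elements of $\mathcal{S}_1*\dots*\mathcal{S}_n$ by reduced words $w=s_1s_2\cdots s_k$ with $k\geq1$, $s_j\in\mathcal{S}_{i_j}$, and $i_j\neq i_{j+1}$. Writing $\pi(w)=s_1\cdots s_k\in\mathcal{S}$, the plan is to show that the point $\pi(w)\cdot x_0$ determines $w$ completely. Then $\pi(w)=\pi(w')$ gives $\pi(w)\cdot x_0=\pi(w')\cdot x_0$, and hence $w=w'$.

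\textbf{Step 1 (location).} By induction on $k$, I will show that $\pi(w)\cdot x_0\in X_{i_1}$. For $k=1$ this is (1), since $x_0\in Y_{i_1}$. For $k\geq2$, induction gives $y:=s_2\cdots s_k\cdot x_0\in X_{i_2}$. Since $i_2\neq i_1$, we have $y\in Y_{i_1}$, so $s_1\cdot y\in X_{i_1}$ by (1). Because the $X_i$ are pairwise disjoint and $x_0\notin\bigcup_i X_i$, the point $\pi(w)\cdot x_0$ determines the index $i_1$.

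\textbf{Step 2 (peeling).} I will prove by induction on $\min(k,k')$ that if $w=s_1\cdots s_k$ and $w'=t_1\cdots t_{k'}$ are reduced words with $\pi(w)\cdot x_0=\pi(w')\cdot x_0=:z$, then $w=w'$. Step 1 shows that $z$ lies in both $X_{i_1}$ and $X_{i'_1}$, so $i_1=i'_1=:i$. Now write $z=s_1\cdot y=t_1\cdot y'$. Here $y$ is $x_0$ if $k=1$, and $s_2\cdots s_k\cdot x_0\in X_{i_2}$ otherwise, and $y'$ is defined the same way from $w'$. In either case $y,y'\in Y_i$. The injectivity hypothesis (2) applied to $\mathcal{S}_i\times Y_i$ then gives $s_1=t_1$ and $y=y'$.

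To finish the induction, I need the tails to match. If $k=1$, then $y=x_0$. Since $x_0\notin\bigcup_j X_j$, Step 1 shows $y'$ cannot lie in any $X_j$, so $y'=x_0$ and $k'=1$ as well; the same holds with the roles swapped. Otherwise both tails are nonempty reduced words sending $x_0$ to the same point, and the induction hypothesis gives equality of the tails. Hence $w=w'$.

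\textbf{Main obstacle.} The delicate point is the bookkeeping that makes each tail point lie in the correct $Y_i$, so that hypothesis (2) applies. This is exactly where the choice $Y_i=\{x_0\}\cup\bigcup_{j\neq i}X_j$ is used: $Y_i$ contains both $x_0$ (the empty-tail case) and every $X_j$ with $j\neq i$ (the reduced-word condition $i_2\neq i_1$). The other thing to check is the length-mismatch case, which is handled by $x_0$ lying outside all the $X_j$. Everything else is formal.
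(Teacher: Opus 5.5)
Your proposal is correct and follows essentially the same route as the paper: first locate $\pi(w)\cdot x_0$ in $X_{i_1}$ by induction, then peel off first letters using hypothesis (2) on $\mathcal{S}_i\times Y_i$, and rule out a length mismatch because $x_0\notin\bigcup_j X_j$. The only cosmetic differences are that you apply (2) once before splitting into cases and induct on $\min(k,k')$ rather than $\max\{k,\ell\}$; neither changes anything of substance.
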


\begin{proof}
Surjectivity follows from the assumption that the $\mathcal{S}_i$ generate $\mathcal{S}$. For injectivity, recall that every element of $\mathcal{S}_1*\dots*\mathcal{S}_n$ is represented by a unique word $w=s_1s_2\cdots s_k$ with $k\geq 1$, $s_j\in\mathcal{S}_{i_j}$ for $1\leq j\leq k$, and $i_j\neq i_{j+1}$ for $1\leq j<k$; multiplication is concatenation followed by multiplying adjacent letters that lie in the same factor. We call this word the normal form of the element and $k$ its length. Since $\pi$ restricts to the inclusion on each factor, $\pi(w)=s_1s_2\cdots s_k$, the product now being taken in $\mathcal{S}$.

\underline{Step 1.} We claim that if $w=s_1\cdots s_k$ is in normal form with $s_1\in\mathcal{S}_{i_1}$, then $\pi(w)\cdot x_0\in X_{i_1}$. We argue by induction on $k$. If $k=1$, then $\pi(w)\cdot x_0=s_1\cdot x_0\in X_{i_1}$ by hypothesis (1), since $x_0\in Y_{i_1}$. If $k\geq 2$, write $w=s_1u$ with $u=s_2\cdots s_k$; no merging occurs in this product since $i_2\neq i_1$. The word $u$ is in normal form with first letter in $\mathcal{S}_{i_2}$, so $\pi(u)\cdot x_0\in X_{i_2}\subseteq Y_{i_1}$ by the induction hypothesis and the definition of $Y_{i_1}$. Hence
\begin{equation}\label{eq:ping+pong+1}
\pi(w)\cdot x_0=\big(s_1\,\pi(u)\big)\cdot x_0=s_1\cdot\big(\pi(u)\cdot x_0\big)\in X_{i_1}
\end{equation}
by hypothesis (1) again.

\underline{Step 2.} We show that the map $w\mapsto\pi(w)\cdot x_0$ is injective on $\mathcal{S}_1*\dots*\mathcal{S}_n$. Let $w=s_1\cdots s_k$ and $w'=s_1'\cdots s_\ell'$ be in normal form, with $s_j\in\mathcal{S}_{i_j}$ and $s_j'\in\mathcal{S}_{m_j}$, and suppose that $\pi(w)\cdot x_0=\pi(w')\cdot x_0$. We prove that $w=w'$ by strong induction on $\max\{k,\ell\}$. By Step 1, we have that $\pi(w)\cdot x_0\in X_{i_1}\cap X_{m_1}$. Thus, it must be the case that $i_1=m_1$, since the $X_i$ are pairwise disjoint. In particular, $s_1,s_1'\in\mathcal{S}_{i_1}$.

Now, if $\max\{k,\ell\}=1$, then $k=\ell=1$ and $s_1\cdot x_0=s_1'\cdot x_0$. In particular, since $x_0\in Y_{i_1}$, hypothesis (2) then gives $s_1=s_1'$, that is, $w=w'$.

Next suppose that $\max\{k,\ell\}>1$; by symmetry we may assume that $k>1$. Write $w=s_1u$ with $u=s_2\cdots s_k$. As in Step 1, we have that $\pi(u)\cdot x_0\in X_{i_2}\subseteq Y_{i_1}$ and $\pi(w)\cdot x_0=s_1\cdot(\pi(u)\cdot x_0)$.

\textbf{Case $\ell=1$.} Then we compute that
\[
s_1\cdot\big(\pi(u)\cdot x_0\big)=\pi(w)\cdot x_0=\pi(w')\cdot x_0=s_1'\cdot x_0.
\]
Both $\pi(u)\cdot x_0$ and $x_0$ lie in $Y_{i_1}$, so hypothesis (2) gives $\pi(u)\cdot x_0=x_0$. This is impossible, since $\pi(u)\cdot x_0\in X_{i_2}$ while $x_0\notin\bigcup_i X_i$. Hence this case cannot occur.

\textbf{Case $\ell>1$.} Write $w'=s_1'u'$ with $u'=s_2'\cdots s_\ell'$. The word $u'$ is in normal form with first letter in $\mathcal{S}_{m_2}$, and $m_2\neq m_1=i_1$, so $\pi(u')\cdot x_0\in X_{m_2}\subseteq Y_{i_1}$ by Step 1. Applying \eqref{eq:ping+pong+1} to both $w$ and $w'$ gives
\[
s_1\cdot\big(\pi(u)\cdot x_0\big)=\pi(w)\cdot x_0=\pi(w')\cdot x_0=s_1'\cdot\big(\pi(u')\cdot x_0\big),
\]
so hypothesis (2) yields $s_1=s_1'$ and $\pi(u)\cdot x_0=\pi(u')\cdot x_0$. Since $u$ and $u'$ are in normal form, of lengths $k-1$ and $\ell-1$ with $\max\{k-1,\ell-1\}<\max\{k,\ell\}$, the induction hypothesis implies that $u=u'$. Therefore $w=s_1u=s_1'u'=w'$.

\underline{Step 3.} If $\pi(w)=\pi(w')$, then $\pi(w)\cdot x_0=\pi(w')\cdot x_0$, and so $w=w'$ by Step 2. Hence $\pi$ is injective, and therefore an isomorphism, which completes the proof.
\end{proof}
\subsection{Free Products of Semigroups} 
The main result of this section is a classification of partially commutative semigroups of rational functions in one variable:    
\begin{thm}\label{thm:classification+dimension1} Let $K$ be a field of characteristic zero and let $\mathcal{S}$ be a finitely generated partially commutative semigroup. Then $\mathcal{S}$ can be represented by a semigroup of rational functions in $K(x)$ all of degree at least $2$ if and only if $\mathcal{S}$ is a free product of free commutative semigroups.     
\end{thm}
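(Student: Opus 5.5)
The proof has two directions, and I will treat them separately. The \emph{necessity} direction says that commutation in $K(x)$ is transitive among non-linear maps, up to the relations allowed in a partially commutative semigroup, so the commutation graph must be a disjoint union of cliques. The \emph{sufficiency} direction is a construction, and I expect it to be the real work. It will go through the ping-pong Lemma \ref{lem:ping+pong} together with Mason's abc-theorem, as announced in the introduction.

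For necessity, suppose $\mathcal{S}=\langle f_1,\dots,f_r\rangle\subset K(x)$ is partially commutative with graph $\mathcal{G}$. It suffices to show that $\mathcal{G}$ has no induced path $f_i - f_j - f_k$, that is, no situation where $f_i,f_k$ both commute with $f_j$ but not with each other. I would use Ritt's theory of commuting rational functions, which can be taken over $\overline{K}$ and in characteristic zero.
\begin{itemize}
\item If $f_j$ is not exceptional (not conjugate to a power map, a Chebyshev polynomial or a Lattès map), then any $g$ commuting with $f_j$ shares an iterate with it: $g^a=f_j^b$ for some $a,b\geq 1$. This is the Ritt/Eremenko/Pakovich result.
\item Apply this to both neighbours: $f_i^a=f_j^b$ and $f_k^c=f_j^e$. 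Then $f_i^{ac}=f_j^{bc}$ and $f_k^{ac}=f_j^{ae}$. If $bc\neq ae$, these give a relation $f_i^{ac}=f_j^{bc}$ between distinct words, which is not a consequence of the commuting relations. If $bc=ae$, we get $f_i^{ac}=f_k^{ac}$, again a non-trivial relation, since $f_i$ and $f_k$ do not commute.
\item In either case this contradicts the defining property that the only relations are commutations. Even without the mixed step, the first bullet alone already contradicts partial commutativity, because $f_i^a=f_j^b$ with $i\neq j$ is a relation between different letters. In a partially commutative semigroup, distinct generators never share powers.
\end{itemize}
So a non-exceptional $f_j$ has no neighbours at all. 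For exceptional $f_j$, the centralizer is explicitly known: up to conjugacy it consists of power maps $\zeta x^{m}$, Chebyshev $\pm T_m$, or Lattès maps from one elliptic curve. Inside each family I would check directly that pairwise commutation is an equivalence relation among maps that satisfy no power relations. This case analysis of the exceptional centralizers is routine but fiddly, and I would cite Ritt for it.

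For sufficiency, write $\mathcal{S}=\mathcal{A}(m_1)*\cdots*\mathcal{A}(m_n)$ and embed each factor as a free commutative semigroup. A natural choice is $\mathcal{A}(m)$ as $\langle x^{p_1},\dots,x^{p_m}\rangle$ with distinct primes $p_i$, which is free commutative by unique factorization of exponents. Then conjugate the $t$th factor by a Möbius map $\mu_t$ moving the fixed points $0,\infty$ to distinct pairs of points $a_t,b_t$.

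To apply Lemma \ref{lem:ping+pong}, I would need disjoint sets $X_t$ such that every element of the $t$th factor maps $Y_t$ into $X_t$ injectively on $\mathcal{S}_t\times Y_t$. Attracting neighbourhoods make condition (1) easy, but condition (2) (injectivity) fails topologically, since power maps are not injective. This is why the introduction invokes Mason's abc-theorem, and it is the main obstacle.

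My plan for the obstacle is to let $X$ be a set of rational functions or polynomials in a new variable $z$, with $\mathcal{S}$ acting by post-composition, and take $x_0=z$. The set $X_t$ would be functions of a specified \emph{shape}: for example, of the form $\mu_t(u^{N})$ with $N$ divisible by one of the factor's exponents. Condition (2) then becomes: if $\mu_t\circ x^{e}\circ \mu_t^{-1}(y)=\mu_t\circ x^{e'}\circ\mu_t^{-1}(y')$ with $y,y'$ of other shapes, then $e=e'$ and $y=y'$.

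Unwinding this gives an equation $A^{e}=B^{e'}$ between rational functions, where $A,B$ are $\mu_t^{-1}$ of functions of type $X_s$ with $s\neq t$. Such functions have controlled zeros and poles, avoiding high multiplicity at the points $a_t,b_t$. Mason's abc-theorem, applied to $A^e - B^{e'}=0$ after clearing denominators, or to the resulting $u^{e}+v=w$ type relations, bounds how many high powers can coincide. It should force $e=e'$ and then $A=\zeta B$, with the root of unity $\zeta$ excluded by the normalization in the shape.

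I would first try to work with the prime exponents chosen large, so that the abc degree inequality $\max\deg\le \#\text{distinct roots}-1$ is violated whenever $e\neq e'$. Finally, I would ensure that $x_0=z$ is not in any $X_t$.
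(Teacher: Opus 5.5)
Your necessity argument has a gap exactly where the content lies. The non-exceptional case is fine. A map commuting with a non-exceptional $f_j$ shares an iterate with it (Ritt), and letter counts are invariant under the commutation relations, so such an $f_j$ is an isolated vertex of $\mathcal{G}$. Hence every induced $P_3$ has an exceptional middle vertex.

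There, the criterion you propose to ``check directly'' is false as stated. Take $f_i=x^2$, $f_j=x^3$, $f_k=-x^2$. Then $x^3$ commutes with both, while $x^2\circ(-x^2)=x^4\neq -x^4=(-x^2)\circ x^2$. No two of them share an iterate, since their iterates are $x^{2^a}$, $x^{3^b}$, $-x^{2^c}$. What excludes this configuration is a relation of another type, namely $x^2\circ(-x^2)=x^2\circ x^2$. To produce such a relation in general you must use freeness of $\langle f_i,f_k\rangle$. Concretely, you must prove that two non-commuting maps in the centralizer of an exceptional map never generate a free semigroup, and Ritt does not state this.

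For $f_j=x^m$ it follows by pigeonhole. Every map commuting with $x^m$ is $\zeta x^{\pm n}$ with $\zeta^{m-1}=1$, so words in $f_i,f_k$ with equal letter counts take at most $m-1$ values. The Chebyshev and Latt\`es cases, however, need a precise description of the centralizer (e.g.\ translation parts lying in a finite torsion group). The paper bypasses all of this. Since $\langle f_1,f_2\rangle$ and $\langle f_2,f_3\rangle$ are free commutative, they have polynomial growth, so \cite[Corollary 4.12]{MR4780496} gives $\textup{PrePer}(f_1)=\textup{PrePer}(f_2)=\textup{PrePer}(f_3)$. The same corollary then says $\langle f_1,f_3\rangle$ has polynomial growth, contradicting freeness.

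For sufficiency your route is the paper's: Lemma \ref{lem:ping+pong} for post-composition on a set of functions, basepoint the identity, $X_t$ defined by shape, and Mason. But Mason is in the wrong place. The equation $A^{e}=B^{e'}$ with $e\neq e'$ has plenty of solutions ($A=C^{e'}$, $B=C^{e}$), so no abc inequality or choice of large primes forces $e=e'$. The correct step, as in Lemma \ref{lem:power+maps}(4), has two parts. First, cancel common factors using injectivity of each element on $X$, so the remaining exponents are coprime. Then B\'ezout shows that $P$ or $Q$ is itself a conjugated perfect power, i.e.\ lies in $X_t$, contradicting $Y_t\cap X_t=\varnothing$. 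Mason enters only in proving that the $X_t$ are pairwise disjoint.

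The root-of-unity normalization is also not a formality with M\"obius conjugates. Suppose one factor contains $x^2$ (fixed pair $\{0,\infty\}$) and another is conjugated by $\mu_s(w)=\frac{1+w}{1-w}$ (fixed pair $\{1,-1\}$). Then $\mu_s(u^{-p})=-\mu_s(u^{p})$, so $x^2$ identifies two distinct elements of $X_s\subseteq Y_t$, and hypothesis (2) of Lemma \ref{lem:ping+pong} fails. The paper avoids both problems by giving all factors the common fixed point $\infty$: $f_{ij}=(x+b_i)^{d_{ij}}-b_i$, $X$ the monic non-constant polynomials, and $X_i=\{g^e-b_i\}$. Disjointness then reduces to the polynomial Fermat--Catalan equation $g^e-h^{e'}=b_i-b_j$, and monicity removes the root of unity.
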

\begin{remark} Equivalently, $\mathcal{S}$ is represented by a semigroup of rational maps in $K(x)$ of degree at least $2$ if and only of $\mathcal{G}(\mathcal{S})=K_{m_1}\sqcup K_{m_2}\dots \sqcup K_{m_k}$ for some $k\geq1$ and $m_1,\dots, m_k\geq1$ In the literature, such graphs are known as cluster graphs.  
\end{remark}
To prove one direction of this result, we first note that if $\mathcal{S}$ is a partially commutative semigroup as in Theorem \ref{thm:classification+dimension1}, then the commutation graph $\mathcal{G}(\mathcal{S})$ cannot contain $P_3$ as a subgraph; here $P_3$ is the (line) graph with three vertices, $\{f_1,f_2,f_3\}$, such that there is an edge between $f_1$ and $f_2$ and $f_2$ and $f_3$, but no edge between $f_1$ and $f_3$.  
\begin{lem}\label{lem:no+P3's+dimension+one} Let $\mathcal{S}$ be a partially commutative semigoup of rational functions in $K(x)$ all  of degree at least $2$. Then $\mathcal{G}(\mathcal{S})$ does not contain the subgraph $P_3$.    
\end{lem}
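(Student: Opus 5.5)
The plan is to use the classical theory of commuting rational maps (Julia, Fatou, Ritt; in modern form Eremenko and Pakovich) to show that a $P_3$ in $\mathcal{G}(\mathcal{S})$ would force an extra relation in $\mathcal{S}$. Here the middle vertex $f_2$ commutes with both $f_1$ and $f_3$, while $f_1$ and $f_3$ do not commute. The extra relation would be one that does not follow from the commuting relations defining $\mathcal{S}$, contradicting partial commutativity. Since $K$ has characteristic zero and only finitely many coefficients are involved, I first embed the relevant subfield of $K$ into $\mathbb{C}$ and regard $f_1,f_2,f_3$ as complex rational maps of degree at least $2$.

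The first step treats the case where $f_2$ is \emph{not} exceptional, meaning it is not conjugate to a power map, a $\pm$Chebyshev polynomial, or a Lattès map. By Ritt's theorem on commuting rational functions, for $i\in\{1,3\}$ there are integers $a_i,b_i\geq1$ and a M\"obius map $\sigma_i$ with
\[f_2^{a_i}=\sigma_i\circ f_i^{b_i}.\]
Each $\sigma_i$ lies in the finite group of M\"obius maps preserving the measure of maximal entropy (equivalently, the Julia set) of $f_2$. I would then use this finite group to absorb the $\sigma_i$: since the group is finite and is normalized by the maps in question, passing to suitable iterates yields a genuine equality $f_2^{M}=f_1^{N}$ in $\mathcal{S}$ with $M,N\geq1$. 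This relation cannot follow from the commuting relations $R(\mathcal{G})$, because those relations preserve the multiset of letters in a word, and here the two sides use different letters. This contradicts the presentation of $\mathcal{S}$. In fact this case shows that $f_2$ cannot commute with any other generator at all, so it already rules out a $P_3$.

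The second step, which I expect to be the main obstacle, is the exceptional case. If $f_2$ is exceptional, then any nonlinear $g$ commuting with $f_2$ is exceptional of the same type and in the same coordinate. After a common conjugation, $f_1$ and $f_3$ therefore lie in one of the following explicit families:
\begin{itemize}
\item maps $\zeta z^{\pm k}$ with $\zeta$ a root of unity;
\item maps $\pm T_k$, where $T_k$ is the $k$-th Chebyshev polynomial;
\item Lattès maps induced by affine maps $z\mapsto\alpha z+\beta$ of a fixed elliptic curve, with $\beta$ torsion.
\end{itemize}
In each family the maps are, up to a finite group of symmetries, images of the multiplicative monoid of a commutative ring ($\mathbb{Z}$, or an order in an imaginary quadratic field). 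My plan is to use this structure to produce a non-trivial relation between $f_1$ and $f_3$. In the power-map case, for example, the maps are $f_i=\zeta_i z^{k_i}$ with $\zeta_i$ roots of unity. Both $f_1\circ f_3$ and $f_3\circ f_1$ equal a root of unity times $z^{k_1k_3}$, so the two composites differ only by post-composition with a root of unity. Iterating, some equality such as $(f_1f_3)^m=(f_3f_1)^m$, or $f_1^{p}f_3^{q}=f_3^{q}f_1^{p}$ with large $p,q$, should hold. Such an equality is false in the partially commutative semigroup when $f_1$ and $f_3$ do not commute, because there the two sides have different Cartier--Foata normal forms (Theorem \ref{thm:CF+NF}). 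I would check this family by family, carefully tracking the finite symmetry groups, with the Lattès case being the most delicate.

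Combining the two cases, no three generators can form a $P_3$ in $\mathcal{G}(\mathcal{S})$, which is the statement of the lemma.
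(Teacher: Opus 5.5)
Your Step 1 is sound. When $f_2$ is not exceptional, Ritt's theorem gives a common iterate $f_2^{\circ M}=f_1^{\circ N}$, and since the defining relations of $\mathcal{S}$ preserve letter content, this is impossible. So a non-exceptional generator is even an isolated vertex of $\mathcal{G}(\mathcal{S})$.

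The gap is Step 2. This is the real content of the lemma, you leave it as a plan, and the mechanism you propose there fails. Take $f_2=z^3$, $f_1=-z^2$, $f_3=z^2$, all in your power-map family. Both $f_1$ and $f_3$ commute with $f_2$, and $f_1\circ f_3=-z^4\neq z^4=f_3\circ f_1$. But for all $p,q,m\geq 1$:
\begin{itemize}
\item $f_1^{\circ p}\circ f_3^{\circ q}=-z^{2^{p+q}}$, whereas $f_3^{\circ q}\circ f_1^{\circ p}=z^{2^{p+q}}$;
\item $(f_1\circ f_3)^{\circ m}=-z^{4^m}$, whereas $(f_3\circ f_1)^{\circ m}=z^{4^m}$.
\end{itemize}
So neither relation you suggest ever holds: the sign in front is never killed by iteration, because the exponents are even. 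A relation does exist, namely $f_1\circ f_1=f_1\circ f_3=-z^4$, but nothing in your argument finds it.

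A second point also needs a proof or a reference. You assert that every map commuting with an exceptional $f_2$ is exceptional of the same type in the same coordinate. In the Lattès case this means lifting to an affine map of the same elliptic curve, compatible with the same finite automorphism group.

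The idea that closes Step 2 is a count, not an explicit relation.
\begin{itemize}
\item In each family, a map is determined by a multiplier in a commutative ring ($\mathbb{Z}$ or $\textup{End}(E)$) together with an element of a fixed finite set: roots of unity, a sign, or translation by a point of a finite torsion subgroup.
\item The multiplier of a composite depends only on how many times each letter occurs.
\item Hence the $2^\ell$ words of length $\ell$ in $f_1,f_3$ give at most $C(\ell+1)$ distinct maps, so two distinct words coincide.
\item This contradicts the freeness of $\langle f_1,f_3\rangle$, which partial commutativity forces because $f_1,f_3$ are non-adjacent.
\end{itemize}

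This is a polynomial-growth argument. The paper also uses polynomial growth, but avoids the classification altogether. Since $\langle f_1,f_2\rangle$ and $\langle f_2,f_3\rangle$ are free commutative of rank $2$, \cite[Corollary 4.12]{MR4780496} gives $\textup{PrePer}(f_1)=\textup{PrePer}(f_2)=\textup{PrePer}(f_3)$. The same corollary applied to $(f_1,f_3)$ then gives polynomial growth of $\langle f_1,f_3\rangle$, which is impossible for a free semigroup of rank $2$.

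Once patched, your Ritt-based route is more hands-on and yields extra information about non-exceptional generators. However, it needs the full centralizer description of exceptional maps as input. The paper trades that for a single growth criterion whose transitivity does all the work.
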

\begin{proof} Suppose that $\mathcal{G}(\mathcal{S})$ contains a subgraph $P_3$ with vertices  corresponding to the functions $f_1,f_2,f_3$. Then $f_2$ commutes with both $f_1$ and $f_3$ and $\langle f_1,f_3 \rangle$ is free. More specifically, $\langle f_1,f_2\rangle$ and $\langle f_2,f_3 \rangle$ are free commutative semigroups of rank $2$ --- the only relations between generators are the commuting ones. Hence, $\langle f_1,f_2\rangle$ and $\langle f_2,f_3 \rangle$ have polynomial growth; in fact, both semigroups have quadratic growth. In particular, after embedding $K$ into $\mathbb{C}$, we may apply \cite[Corollary 4.12]{MR4780496} to both pairs of functions $(f_1,f_2)$ and $(f_2,f_3)$  separately to deduce that $\textup{PrePer}(f_1)=\textup{PrePer}(f_2)$ and $\textup{PrePer}(f_2)=\textup{PrePer}(f_3)$, where $\textup{PrePer}(f)$ denotes the set of complex preperiodic points of $f$. However, since $\textup{PrePer}(f_1)=\textup{PrePer}(f_3)$, we see again from \cite[Corollary 4.12]{MR4780496} that $\langle f_1,f_3\rangle$ has polynomial growth. But this contradicts our assumption that $\langle f_1,f_3\rangle$ is free, since free semigroups have exponential growth. The claim follows.   
\end{proof}
Next, we have the following elementary observation: if a finite graph does not contain $P_3$ as a subgraph, then it must be a disjoint union of complete subgraphs (or cliques). 
\begin{lem}\label{lem:graph+no+P3} Let $\mathcal{G}=(\mathcal{V},\mathcal{E})$ be a graph with finitely many vertices. If $G$ does not contain $P_3$ as a subgraph, then $G$ is a disjoint union of complete subgraphs.   
\end{lem}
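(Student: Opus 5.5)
Here "subgraph" has to mean induced subgraph: three vertices $f_1,f_2,f_3$ with $\{f_1,f_2\},\{f_2,f_3\}\in\mathcal{E}$ and $\{f_1,f_3\}\notin\mathcal{E}$. That is the sense used in Lemma \ref{lem:no+P3's+dimension+one}. The plan is to show that adjacency together with equality is an equivalence relation on $\mathcal{V}$, and that its classes are the desired complete subgraphs. Concretely, define $f\sim f'$ if $f=f'$ or $\{f,f'\}\in\mathcal{E}$. Reflexivity holds by definition, and symmetry holds because $\mathcal{G}$ is an undirected simple graph.

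The only real step is transitivity. Suppose $f\sim f'$ and $f'\sim f''$; we want $f\sim f''$. If any two of the three vertices coincide, the conclusion is immediate. If $f=f'$ or $f'=f''$ it follows directly, and if $f=f''$ it follows by reflexivity. So assume the three vertices are distinct, so that $\{f,f'\},\{f',f''\}\in\mathcal{E}$. If $\{f,f''\}\notin\mathcal{E}$, then the vertices $f,f',f''$ span a copy of $P_3$ with $f'$ as the middle vertex, contradicting the hypothesis. Hence $\{f,f''\}\in\mathcal{E}$ and $f\sim f''$.

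Now let $\mathcal{V}=\mathcal{V}_1\sqcup\dots\sqcup\mathcal{V}_k$ be the equivalence classes, with $k\geq1$ since $\mathcal{V}$ is finite and nonempty.
\begin{itemize}
\item Any two distinct vertices in the same class are related, hence adjacent, so the induced subgraph on each $\mathcal{V}_i$ is complete, say $K_{m_i}$ with $m_i=\#\mathcal{V}_i$.
\item Every edge joins related vertices, so no edge runs between different classes.
\end{itemize}
Therefore $\mathcal{G}=K_{m_1}\sqcup\dots\sqcup K_{m_k}$, as claimed.

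There is no genuine obstacle in this argument. The one point needing care is the interpretation of ``contains $P_3$'' as induced. A non-induced reading would make the lemma false, since $K_3$ contains a path of length two but is itself complete. The degenerate cases where vertices coincide in the transitivity check are handled separately above.
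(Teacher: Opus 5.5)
Your proof is correct and rests on the same observation as the paper's: three distinct vertices $f,f',f''$ with $\{f,f'\},\{f',f''\}\in\mathcal{E}$ but $\{f,f''\}\notin\mathcal{E}$ form a $P_3$. The paper applies this to the first three vertices of a shortest path inside a connected component, whereas you use it to show that ``equal or adjacent'' is transitive, so the cliques arise directly as equivalence classes without invoking components or path minimality. This is just a different packaging of the same argument, and your remark that ``subgraph'' must be read as induced subgraph matches the paper's definition of $P_3$ and its use of $\{w_0,w_2\}\notin\mathcal{E}$.
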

\begin{proof} Let $C$ be a connected component of $\mathcal{G}$. Then it suffices to show that $C$ is a complete graph. Let $u,v\in \mathcal{V}$ be vertices of $C$. Since $C$ is connected, there is a shortest path $u=w_0\dots w_k=v$ with $(w_i,w_{i+1})\in \mathcal{E}$ from $u$ to $v$. If $k\geq2$, then $\{w_0,w_1\},$$\{w_1,w_2\}\in \mathcal{E}$ and $\{w_0,w_2\}\not\in \mathcal{E}$, since otherwise $w_0w_2\dots w_k$ is a shorter path from $u$ to $v$. But then the subgraph with vertices $w_0,w_1, w_2$ forms a $P_3$ in $\mathcal{G}$, a contradiction. Hence, $k=1$ and $C$ is complete.          
\end{proof}
Conversely, we can explicitly construct arbitrary free products of free commutative semigroups using polynomials in one variable:    
\begin{prop}\label{prop:Free+products+free+abelian}
Let $K$ be a number field and let $k,m_1,\dots, m_k\geq1$. Then choose distinct constants $b_1,\dots, b_k\in K$ and degrees $d_{ij}\geq2$ for $1\leq i\leq k$ and $1\leq j\leq m_i$. Moreover, assume that $\gcd(d_{ij},d_{i\ell})=1$ for all  $1\leq i\leq  k$ and $1\leq j<\ell\leq m_i$. Define the polynomials 
\[f_{ij}(x)=(x+b_i)^{d_{ij}}-b_i.\]
Then $\mathcal{S}:=\langle f_{ij}\rangle\cong \mathcal{A}(m_1)*\dots*\mathcal{A}(m_k)$ is partially commutative, where $\mathcal{A}(m)$ denotes the free commutative semigroup of rank $m$. Equivalently, $\mathcal{G}(\mathcal{S})=K_{m_1}\sqcup\dots \sqcup K_{m_k}$. 
\end{prop}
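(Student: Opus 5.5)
Writing $\sigma_i(x)=x+b_i$, we have $f_{ij}=\sigma_i^{-1}\circ p_{d_{ij}}\circ\sigma_i$ with $p_d(x)=x^d$. The plan has three steps.

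\textbf{Each cluster is free commutative.} Conjugation by $\sigma_i$ identifies $\mathcal{S}_i:=\langle f_{i1},\dots,f_{im_i}\rangle$ with $\langle x^{d_{i1}},\dots,x^{d_{im_i}}\rangle$. This is the image of the multiplicative semigroup generated by $d_{i1},\dots,d_{im_i}$, since $x^a\circ x^b=x^{ab}$. Hence the $f_{ij}$ commute within each cluster. A product $\prod_j f_{ij}^{a_j}$ equals $\sigma_i^{-1}\circ x^{\prod_j d_{ij}^{a_j}}\circ\sigma_i$, so $\mathcal{S}_i\cong\mathcal{A}(m_i)$ exactly when $(a_j)\mapsto\prod_j d_{ij}^{a_j}$ is injective on $\mathbb{Z}_{\geq0}^{m_i}\setminus\{0\}$. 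I would read the coprimality hypothesis as saying that the $d_{ij}$ are multiplicatively independent, which gives this injectivity. This holds for instance when they are pairwise coprime with distinct prime supports.

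\textbf{Free product via ping-pong.} By the first step, $\mathcal{S}$ is a quotient of $\mathcal{S}_1*\cdots*\mathcal{S}_k$. I would apply Lemma~\ref{lem:ping+pong} with $X=K[x]$ (or polynomials over $\overline{K}$) and the left action $F\cdot g=F\circ g$. The pieces are
\[X_i=\{(x+b_i)^e u-b_i : e\geq2,\ u\in K[x]\ \text{nonconstant},\ u\ \text{not a perfect power}\}\,\cup\,\{(x+b_i)^{e}\text{-type elements}\}.\]
More precisely, the intended choice is that $X_i$ consists of the nonconstant $g$ such that $g+b_i$ is a perfect $e$-th power for some $e\geq2$, and $x_0=x$. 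Condition (1) is immediate: $s\in\mathcal{S}_i$ has the form $(x+b_i)^D-b_i$ with $D\geq2$, so $(s\cdot y)+b_i=(y+b_i)^D$.

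\textbf{Disjointness and injectivity (the main obstacle).} Two facts are needed.

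(a) The sets $X_i$ must be pairwise disjoint and avoid $x_0$. If $g+b_i=P^a$ and $g+b_j=Q^c$ with $a,c\geq2$ and $b_i\neq b_j$, then $P^a-Q^c=b_i-b_j$ is a nonzero constant. Mason's abc theorem, applied to $P^a+(-Q^c)=\text{const}$, gives
\[\max(a\deg P,c\deg Q)\leq \deg P+\deg Q-1,\]
which is impossible for $a,c\geq2$. Also $x\notin X_i$ because $x+b_i$ is not a perfect power.

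(b) The map $(s,y)\mapsto s\circ y$ on $\mathcal{S}_i\times Y_i$ must be injective. Suppose $(y+b_i)^D=(y'+b_i)^{D'}$ with $y,y'\in Y_i$. Since $y+b_i$ is not itself a perfect power, we can compare maximal power-exponents: $y+b_i$ is not a proper power when $y\in\{x_0\}\cup\bigcup_{j\neq i}X_j$, again by the abc argument in (a). Unique factorization in $\overline{K}[x]$ then forces $D=D'$, up to a root of unity that is absorbed by checking leading and constant terms, and hence $y=y'$. Then $s=s'$, because $D$ determines $s$ within $\mathcal{S}_i$ by the first step.

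I expect the delicate part to be setting up the definition of $X_i$ so that (b) works: tracking the maximal exponent $e$ with $g+b_i$ an $e$-th power, and handling roots of unity in $\zeta(y'+b_i)^{D'/D}$. I would state precisely the lemma that for $y\in Y_i$, $y+b_i$ is not a constant times a proper power, prove it by Mason's theorem, and deduce (b) from it. Lemma~\ref{lem:ping+pong} then yields $\mathcal{S}\cong\mathcal{A}(m_1)*\cdots*\mathcal{A}(m_k)$. By Remark~\ref{rmk:graph+ops}, $\mathcal{G}(\mathcal{S})=K_{m_1}\sqcup\cdots\sqcup K_{m_k}$.
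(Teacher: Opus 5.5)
Your overall strategy matches the paper's: Lemma \ref{lem:ping+pong} with base point $x_0=x$, sets $X_i$ of polynomials $g$ with $g+b_i$ a proper power, and Mason's theorem for disjointness of the $X_i$. Your treatment of hypothesis (2) by comparing maximal exponents is a mild variant of the paper's Lemma \ref{lem:power+maps}(4), which cancels common generators and uses that the leftover exponents are coprime. That part is sound. If $A=y+b_i$ and $B=y'+b_i$ are not constants times proper powers, then $A^D=B^{D'}$ forces $D=D'$ by taking the gcd of root multiplicities in $\overline{K}[x]$, and no coprimality is needed for this.

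The gap is the next step. From $D=D'$ you only get $A=\zeta B$ with $\zeta^D=1$. With your choice $X=K[x]$, nothing forces $\zeta=1$; checking leading and constant terms merely confirms the value of $\zeta$. Concretely, take $K=\mathbb{Q}$, $k=3$, $b_1=0$, $b_2=1$, $b_3=-1$ and $d_{11}=2$.
\begin{itemize}
\item $y=-x^3-1$ lies in $X_2$, since $y+1=(-x)^3$.
\item $y'=x^3+1$ lies in $X_3$, since $y'-1=x^3$.
\end{itemize}
So both lie in $Y_1$ and $y\neq y'$, yet $f_{11}\circ y=(x^3+1)^2=f_{11}\circ y'$. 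Hypothesis (2) of the ping-pong lemma is therefore false for your sets whenever some $b_i$ is the midpoint of two others and the $i$th cluster contains an even degree. More generally it fails whenever $b_j-b_i=\zeta(b_l-b_i)$ for a root of unity $\zeta\neq 1$ in $K$ with $\zeta^D=1$ for some attainable degree $D$.

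The repair is exactly the paper's choice. Let $X$ be the set of monic nonconstant polynomials; it contains $x_0=x$ and is preserved by the monic maps $f_{ij}$. Define $X_i\subseteq X$ as before; if $g$ is monic and $g+b_i=P^e$, you may rescale $P$ to be monic. Then $A$ and $B$ are monic, so $\zeta=1$ and your argument concludes.

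Separately, you do not need to ``read'' the coprimality hypothesis as multiplicative independence. Pairwise coprime integers $d_{i1},\dots,d_{im_i}\geq 2$ are automatically multiplicatively independent. Each $d_{ij}$ has a prime factor dividing none of the others, and comparing valuations at that prime recovers the exponent of $d_{ij}$.
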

In light of the Ping-Pong Lemma \ref{lem:ping+pong}, we define the following sets 
\begin{equation}\label{eq:ping+pong+sets}
\begin{split}
X=\{f\in K[x]:\deg(f)\geq1\;\text{and}\;\text{$f$ is monic}\}\;\;\text{and}\;\; X_i=\{g^e-b_i\;:\: g\in X \;\text{and}\; e\geq2\}
\end{split} 
\end{equation}
for $1\leq i\leq k$. Moreover, it is clear that the map $\mathcal{S}\times X\rightarrow X$ given by $(F,P)\rightarrow F\circ P$ gives a left action of $\mathcal{S}$ on $X$. Moreover, we have the following additional properties:    
\begin{lem}\label{lem:power+maps} Let $f_{ij}$ be defined as in Proposition \ref{prop:Free+products+free+abelian}, let $\mathcal{S}_i=\langle f_{i1},\dots, f_{im_i} \rangle$ for $1\leq i\leq k$, and let $L_i(x)=x+b_i$. Moreover, let $X$ and $X_i$ be as defined on \eqref{eq:ping+pong+sets}. Then the following hold: 
\vspace{.05cm}
\begin{enumerate}
\item[\textup{(1)}] $\mathcal{S}_i\cdot X\subseteq X_i$ for all $1\leq i\leq k$ and $X_i\cap X_j=\varnothing$ for all $i\neq j$.  
\vspace{.15cm}
\item[\textup{(2)}] For all $G,G'\in \mathcal{S}_i$, we have that $G=G'$ if and only if $\deg(G)=\deg(G')$.    
\vspace{.15cm} 
\item[\textup{(3)}] If $P,Q\in X$ and $G\circ P=G\circ Q$ for some $G\in \mathcal{S}_i$, then $P=Q$.
\vspace{.15cm} 
\item[\textup{(4)}] If $P,Q\in X$ and $G\circ P=G'\circ Q$ for some $G,G'\in \mathcal{S}_i$ with $G\neq G'$, then either $P\in X_i$ or $Q\in X_i$.   
\end{enumerate}
\end{lem}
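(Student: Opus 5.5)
The plan is to reduce everything to conjugated power maps. Set $L_i(x)=x+b_i$ and $p_d(x)=x^d$. Then
\[
f_{ij}=L_i^{-1}\circ p_{d_{ij}}\circ L_i,
\]
so every $G\in\mathcal{S}_i$ has the form $G=L_i^{-1}\circ p_{e}\circ L_i$, that is, $G(x)=(x+b_i)^{e}-b_i$. Here $e=\deg(G)=\prod_j d_{ij}^{a_j}\geq2$, where $(a_j)$ is the exponent vector of $G$ in the generators. All four statements then become statements about $(P+b_i)^e$.

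For (1), first note that $G\circ P=(P+b_i)^e-b_i$. Since $P$ is monic of degree $\geq1$, the polynomial $g=P+b_i$ is again in $X$, so $G\circ P\in X_i$. For disjointness, suppose $g^e-b_i=h^{e'}-b_j$ with $i\neq j$. Then $g^e-h^{e'}=b_i-b_j$ is a nonzero constant. I would rule this out with Mason's abc theorem applied to the coprime triple $g^e,\,-h^{e'},\,c$ with $c\neq0$. Writing $n=\deg(g^e)=\deg(h^{e'})$, the radical has degree at most
\[
\deg g+\deg h\leq \frac{n}{e}+\frac{n}{e'}\leq n,
\]
which contradicts Mason's bound $n\leq \deg(\operatorname{rad})-1$. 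Coprimality holds automatically, because any common root would be a root of the constant $c$.

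For (2), it suffices that $G$ is determined by $e$, since $G=L_i^{-1}\circ p_e\circ L_i$. The remaining point is freeness: the gcd hypothesis should ensure that $e=\prod_j d_{ij}^{a_j}$ determines the exponent vector. This is where I need care, because pairwise coprime $d_{ij}$ do not by themselves force unique factorization; for instance $2$ and $9$ are coprime but their powers could, in principle, collide with products involving other generators. My strategy is to show that distinct exponent vectors give distinct integers $e$. Given $\prod_j d_{ij}^{a_j}=\prod_j d_{ij}^{a'_j}$, I cancel common powers. For each prime $p$ dividing $d_{ij}$, the prime $p$ divides only that one $d_{ij}$, so comparing $p$-adic valuations gives $a_j v_p(d_{ij})=a'_j v_p(d_{ij})$, hence $a_j=a'_j$. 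So pairwise coprimality does suffice. This also shows that $\mathcal{S}_i\cong\mathcal{A}(m_i)$, since the generators commute.

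For (3), the equation $(P+b_i)^e=(Q+b_i)^e$ gives $P+b_i=\zeta(Q+b_i)$ with $\zeta^e=1$. Comparing leading coefficients of these monic polynomials gives $\zeta=1$, so $P=Q$.

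For (4), I take $e=\deg G\neq e'=\deg G'$ (distinct by (2)), and say $e<e'$. From $(P+b_i)^e=(Q+b_i)^{e'}$, unique factorization in $K[x]$ applied to monic polynomials shows that both sides are $r$-th powers with $r=\operatorname{lcm}(e,e')$. More precisely, write $P+b_i=u^{e'/g}$ and $Q+b_i=u^{e/g}$ with $g=\gcd(e,e')$ and $u$ monic; here monicness removes all unit ambiguity. If $e'/g\geq2$, then $P=u^{e'/g}-b_i\in X_i$. Otherwise $e'/g=1$, so $e'\mid e$, which contradicts $e<e'$.

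I expect the main obstacle to be the disjointness in (1) via Mason's theorem, in particular handling the edge case $e=e'=2$ with $\deg g=\deg h$, where the radical bound is exactly $n$. Since Mason's theorem gives $n\leq\deg\operatorname{rad}-1\leq n-1$, this case is still a contradiction.
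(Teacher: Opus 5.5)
Your proof is correct. For (1) and (3) it is essentially the paper's argument: the closed form $(P+b_i)^e-b_i$, Mason's theorem ruling out $g^e-h^{e'}=b_i-b_j$, and the root-of-unity plus monicity argument. Where you genuinely diverge is in (2) and (4).

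\textbf{The paper's route for (4).} It writes $G,G'$ as products of the commuting generators and uses the left cancellation from (3) to strip off common powers $f_{ij}^{\min(r_j,s_j)}$. By pairwise coprimality of $d_{i1},\dots,d_{im_i}$, this reduces to $(P')^e=(Q')^d$ with $\gcd(e,d)=1$. It then treats $d=1$ separately. For $d\geq 2$ it uses a B\'ezout identity $ae+bd=1$ and integral closedness of $K[x]$ to show that $Q'$ is an $e$-th power.

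\textbf{Your route for (2) and (4).} You use only the closed form $G=L_i^{-1}\circ p_{\deg G}\circ L_i$. This makes (2) immediate and forces $e\neq e'$ whenever $G\neq G'$. You then read off a common monic root $u$ with $P+b_i=u^{e'/g}$ and $Q+b_i=u^{e/g}$ from unique factorization.

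\textbf{What your route buys.}
\begin{itemize}
\item It is shorter, and it avoids both the cancellation step and the B\'ezout trick.
\item It handles monicity of the root automatically. The paper leaves this implicit: its $e$-th root of $Q'$ must be rescaled by a root of unity to lie in $X$.
\item It shows that all four statements of the lemma hold without the hypothesis $\gcd(d_{ij},d_{i\ell})=1$. In your approach, coprimality is needed only to identify $\mathcal{S}_i\cong\mathcal{A}(m_i)$ for the Proposition. There, your $p$-adic valuation argument supplies the step ``pairwise coprime implies multiplicatively independent,'' which the paper asserts without proof.
\end{itemize}
Your hesitation about coprime pairs such as $2$ and $9$ was unfounded, as your own valuation argument shows. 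That discussion is also not needed for (2) as stated.

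\textbf{What the paper's route offers.} It works directly with the word structure of $\mathcal{S}_i$, which is the viewpoint used in the ping-pong lemma. It pays for this by leaning on coprimality at a point where it is not logically required.
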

\begin{proof} Let $1\leq i\leq k$ and $1\leq j\leq m_i$. If $P\in X$, then $g:=P+b_{i}$ is monic, non-constant and $f_{ij}(P)=(P+b_i)^{d_{ij}}-b_i=g^{d_{ij}}-b\in X_i$ by definition of $X_i$. Moreover, since this statement holds for all $1\leq j\leq m_i$, it follows that $\mathcal{S}_i\cdot X\subseteq X_i$ for all $1\leq i\leq k$. Likewise, the claim that the $X_i$ are disjoint follows from the fact that the Fermat-Catalan equation $A^{e_1}-B^{e_2}=c$ for $e_1,e_2\geq2$ and non-zero $c\in K$ has no solutions $A,B\in K[x]$. This is a simple consequence of the Mason ABC theorem for the polynomial ring $K[x]$; see, for instance, \cite[Lemma 3.2]{hindes2022orbit}. Here we use also that $b_i\neq b_j$ for $i\neq j$ by assumption. 

For statement (2), note that $f_{ij}=L_{b_i}^{-1}\circ x^{d_{ij}}\circ L_{b_i}$, so that each $f_{ij}$ is power map conjugated by the linear transformation $L_{b_i}$. Hence, $f_{ij}$ and $f_{i\ell}$ commute for all $1\leq j,\ell\leq m_i$ since arbitrary power maps commute. Thus, every element $G\in \mathcal{S}_i$ is of the form 
\begin{equation}\label{eq:poly+decomp}
G=f_{i1}^{e_1(G)}\circ\dots\circ f_{im_i}^{e_{m_i(G)}}
\end{equation}
for some $e_1(G),\dots e_{m_i}(G)\geq0$. In particular, if $G,G'\in \mathcal{S}_i$ satisfy $\deg(G)=\deg(G')$, then 
\[d_{i1}^{e_1(G)}\cdots d_{im_i}^{e_{m_i}(G)}=d_{i1}^{e_1(G')}\cdots d_{im_i}^{e_{m_i}(G')}.\]
However, since the integers $d_{i1},\dots,d_{im_i}$ are pairwise coprime,  
they are multiplicatively independent. Hence, $e_j(G)=e_j(G')$ for all $1\leq j\leq m_i$, from which the claim follows.   

For statement (3) we may write $G=L_{b_i}^{-1}\circ x^n\circ L_{b_i}$ for some $n\geq2$, so that if $G\circ P=G\circ Q$ for some $P,Q\in X$, then writing $P'=L_{b_i}\circ P$ and $Q'=L_{b_i}\circ Q$, we see that    
\begin{equation}\label{eq:power+overlap+1}
(P')^n=x^n\circ L_{b_i}\circ P=L_{b_i}\circ L_{b_i}^{-1}\circ x^n\circ L_{b_i}\circ P=L_{b_i}\circ G\circ P=L_{b_i}\circ G\circ Q=(Q')^n.
\end{equation}
Hence, $(P')^n=(Q')^n$ for some $n\geq 2$. Hence, $P'$ and $Q'$ must be scalar multiples by an $n$th root of unity $\zeta\in K$. But, since $P$ and $Q$ are assumed to be monic, so are $P'$ and $Q'$. Therefore, we see that $\zeta=1$ and $P'=Q'$. But then, $P=L_b^{-1}\circ P'=L_b^{-1}\circ Q'=Q$ as claimed.  

Finally, suppose that $G\circ P=G'\circ Q$ for some $G,G' \in \mathcal{S}_i$ and some $P,Q\in X$. Then, as in \eqref{eq:poly+decomp}, we may write $G=f_{i1}^{r_1}\circ\dots \circ f_{im_i}^{r_{m_i}}$ and $G'=f_{i1}^{s_1}\circ\dots \circ f_{im_i}^{s_{m_i}}$ for some unique integers $r_i,s_j\geq0$. Note also that by statement (3), the action of $\mathcal{S}_i$ on $X$ is left-cancellative. Hence, if $r_j\neq s_j$ for some $j$, without loss, say $r_j=s_j+t_j$ for $t_j>0$, then we may move the factors $f_{ij}^{r_j}$ and $f_{ij}^{s_{j}}$ to the far left side of the expression $G\circ P=G'\circ Q$ and cancel $f_{ij}^{s_j}$ from both sides of this equation. In particular, we can eliminate the compositional factors of $f_{ij}$ on the right hand side. Continuing in this way (and after relabeling the indices if necessary), we see that 
\begin{equation}\label{eq:power+overlap+2}
\begin{split} 
L_{b_i}^{-1}\circ x^{d_{i1}^{t_1}\cdots d_{ip}^{t_p}}\circ L_{b_i}\circ P &=f_{i1}^{t_1}\circ\dots\circ f_{ip}^{t_p}\circ P\\[5pt] 
&=f_{i(p+1)}^{t_{p+1}}\circ\dots\circ f_{i(p+q)}^{t_{(p+q)}}\circ Q\\[5pt]
&=L_{b_i}^{-1}\circ x^{d_{i(p+1)}^{t_{(p+1)}}\cdots d_{i(p+q)}^{t_{(p+q)}}}\circ L_{b_i}\circ Q,
\end{split}
\end{equation}
for some $p,q\geq0$ and $t_j>0$. In particular, if $G\neq G'$, then it must be the case that either $p$ or $q$ is strictly positive. Without loss, suppose that $p>0$. Note that if $q=0$ (so that all of the power maps in the middle of the equation on the bottom of \eqref{eq:power+overlap+2} disappear), then setting $e=d_{i1}^{t_1}\cdots d_{ip}^{t_p}$ and $P'=L_{b_i}\circ P$ we see that 
\[(P')^e-b_i=L_b^{-1}\circ x^e\circ L_b\circ P=Q\] 
for some $e\geq2$. Thus, $Q\in X_i$ as desired. Hence, we may assume that $p,q>0$. But in this case, if we set $e=d_{i1}^{t_1}\cdots d_{ip}^{t_p}$, set $P'=L_{b_i}\circ P$, set $Q'=L_{b_i}\circ Q$, and set $d=d_{i(p+1)}^{t_{(p+1)}}\cdots d_{i(p+q)}^{t_{(p+q)}}$, then we see by applying $L_{b_i}$ to both sides of \eqref{eq:power+overlap+2} that 
\[(P')^e=(Q')^d\] 
for some \textbf{\emph{coprime}} $e,d\geq2$; here we use the fact that the original degrees $d_{i1},\dots, d_{im_i}$ are all pairwise coprime). But then we choose integers $a,b$ such that $ae+bd=1$ and so compute that 
\[
\scalemath{.95}{
Q'=(Q')^{ae+bd}=((Q')^a)^e\cdot((Q')^d)^b=((Q')^a)^e\cdot ((P')^e)^b=((Q')^a)^e\cdot ((P')^b)^e=((Q')^a\cdot (P')^b)^e
}
\]
Therefore, $Q'$ is an $e$-th power of an element of $K(x)$. In fact, since $K[x]$ is integrally closed in $K(x)$, it must be the case that $Q'$ is an $e$-th power in $K[x]$. Write $Q'=g^e$ for some $g\in K[x]$. Then we have that
\[
Q=L_{b_i}^{-1}\circ L_{b_i}\circ Q=L_{b_i}^{-1}\circ Q'=L_{b_i}^{-1}\circ g^e=g^e-b_i,
\]
so that $Q\in X_i$ as claimed. This completes the proof of the lemma. 
\end{proof}
With the above result in place, we can now use the Ping-Pong Lemma \ref{lem:ping+pong} to produce arbitrary free products of free abelian semigroups using polynomials. 
\begin{proof}[(Proof of Proposition \ref{prop:Free+products+free+abelian})]
Let $f_{ij}$ be as in Proposition \ref{prop:Free+products+free+abelian} and let $X$ and $X_i$ be as in \eqref{eq:ping+pong+sets}. Moreover, for the basepoint $x_0$ in Lemma \ref{lem:ping+pong}, take $x_0=x$ to be the identity polynomial and let $Y_i:=\{x_0\}\cup\bigcup_{j\neq i} X_j$ be as in in Lemma \ref{lem:ping+pong}. Then $X$, $X_i$, and $x_0$ satisfy the setup of Lemma \ref{lem:ping+pong} by statement (1) of Lemma \ref{lem:power+maps}. Moreover, property (1) of Lemma \ref{lem:ping+pong} also holds by statement (1) of Lemma \ref{lem:power+maps}. As for property (2) of Lemma \ref{lem:ping+pong}, suppose that $G\cdot P=G\circ P=G'\circ Q=G'\circ Q$ for some $G,G'\in \mathcal{S}_i$ and some $P,Q\in Y_i$. If $G\neq G'$, then statement (4) of Lemma \ref{lem:power+maps} implies that either $P$ or $Q$ is $X_i$. Without loss, say $P\in X_i$. Then, by definition of $Y_i$, we have two cases: (1) $P\in X_j$ for some $j\neq i$ or (2) $P=x$. However, in case (1) we see that $P\in X_i\cap X_j$, a contradiction. Likewise, if $P=x$, then $x\in X_i$, which is clearly false (for instance, every element in $X_i$ has degree at least 2). Therefore, it must be the case that $G=G'$. But then $G\circ P=G'\circ Q=G\circ Q$, so that statement (3) of Lemma \ref{lem:power+maps} implies that $P=Q$. Therefore, property (2) of Lemma \ref{lem:ping+pong} holds. Hence, $\mathcal{S}\cong \mathcal{S}_1*\dots * \mathcal{S}_k$ follows from Lemma \ref{lem:ping+pong}.  
Finally, statement (2) of Lemma \ref{lem:power+maps} implies that the degree map $\deg: \mathcal{S}_i\rightarrow\langle d_{i1},\dots,d_{im_i}\rangle$ is a semigroup isomorphism. Moreover, $\langle d_{i1},\dots,d_{im_i}\rangle\cong \mathcal{A}(m_i)$ since the $d_{i1},\dots,d_{im_i}$ are multiplicatively independent. The result follows.  
\end{proof}
Finally, we have all of the tools in place to prove the main result of this section: a partially commutative semigroup $\mathcal{S}$ can be  represented by a semigroup of non-linear rational functions in $K(x)$ under composition if and only if it is a free product of free commutative semigroups.
\begin{proof}[(Proof of Theorem \ref{thm:classification+dimension1})]
Suppose that $\mathcal{S}$ is a partially commutative semigroup contained in $K(x)$ of maps of degree at least $2$. Then $\mathcal{G}(\mathcal{S})$ does not contain $P_3$ as a subgraph by Lemma \ref{lem:no+P3's+dimension+one}. Hence, $\mathcal{G}(\mathcal{S})$ is a disjoint union of complete subgraphs by Lemma \ref{lem:graph+no+P3}. In particular, $\mathcal{S}$ is a free product of free commutative semigroups, each factor corresponding to a connected component of $\mathcal{G}(S)$. Conversely, if $\mathcal{S}$ is a finitely generated, abstract semigroup that is a free product of free commutative semigroups, then $\mathcal{S}$ is isomorphic to an explicit semigroup in $K[x]$ of polynomials of degree at least $2$ given in Proposition \ref{prop:Free+products+free+abelian}. 
\end{proof}
\subsection{Direct Products of Semigroups} 
In this section, we use products of elliptic curves to produce different families of partially commutative semigroups. In particular, when $V$ is an abelian variety, we say that a semigroup $\mathcal{S}$ of self-maps of $V$ is \emph{polarized} with respect to an ample (Weil) divisor $D$ on $V$ if
$F^*(D)\sim d(F)\,D$ for every $F\in \mathcal{S}$ and some integer $d(F)\geq 2$. 
\begin{remark}
This agrees with the definition (given earlier) in terms of an ample invertible sheaf $\mathcal{L}$ with $f^*(\mathcal{L})\cong \mathcal{L}^{\otimes d(f)}$. Indeed, abelian varieties are nonsingular
\cite[Ch.~I, \S1]{MilneAV}, hence locally factorial \cite[Theorem ~20.3]{Matsumura}.
On a locally factorial variety, Weil and Cartier divisors coincide, and $D\mapsto\mathcal{O}_V(D)$ identifies divisors modulo linear equivalence with invertible sheaves modulo isomorphism \cite[Ch.~II, Prop.~6.11 and Cor.~6.16]{Hartshorne}.
Moreover, this identification is compatible with pullbacks.
\end{remark}
In particular, we produce \emph{direct products} of free semigroups with free commutative semigroups. For example, the graph $P_3$ is representable in dimension three.
\begin{prop}\label{prop:abelian+examples} Let $K$ be a number field, let $E$ be an elliptic curve defined over $K$, and let $V=E\times E\times E$. Define the divisor $D=2(\pi_1^*(\mathcal{O})+\pi_2^*(\mathcal{O})+\pi_3^*(\mathcal{O}))$ on $K$, where $\pi_i:V\rightarrow E$ is the projection map onto the $i$th component and $\mathcal{O}$ is the identity of $E$. Now fix a non-trivial pythagorean triple $a,b,c\in\mathbb{Z}_{\geq2}$ and an integer $k>1$ coprime to $c$. Define the endomorphisms $\alpha$ and $\beta$ on $V$ given by the following matrices:
\vspace{.1cm} 
\[\alpha=
\begin{bmatrix}
     a&-b &0 \\
     b&\;a&0 \\
     0&\;0&c 
\end{bmatrix}
,
\;\;\;\;
\beta=
\begin{bmatrix}
     kc&0&\;0\\
     0&ka&-kb\\
     0&kb&\;ka 
\end{bmatrix}
\vspace{.1cm} 
.\]
Then the following statements hold:\vspace{.1cm}  
\begin{enumerate}
\item[\textup{(1)}] The semigroup $\langle \alpha,\beta\rangle$ is free and polarized with respect to $D$.
\vspace{.15cm} 
\item[\textup{(2)}] Fix $n\geq1$ and let $f_i=\alpha\circ \beta^i$ for $1\leq i\leq n$. Then $\langle f_1,\dots, f_n\rangle$ is a free semigroup that is polarized with respect to $D$.\vspace{.15cm} 
\item[\textup{(3)}] Fix $m\geq1$, let $q_1,\dots,q_m>kc$ be prime numbers, and let $g_j=[q_j]$ be the multiplication by $q_i$ map on $V$ for $1\leq j\leq m$. Then $\langle g_1,\dots,g_m\rangle$ is a free commutative semigroup that is polarized with respect to $D$.\vspace{.15cm}
\item[\textup{(4)}] Define $\mathcal{S}(a,b,c,k,n,m):=\langle f_1,\dots, f_n,g_1\dots,g_m \rangle$. Then  
\[\mathcal{S}(a,b,c,k,n,m)\cong\mathcal{F}(n)\times\mathcal{A}(m),\]
and $\mathcal{S}(a,b,c,k,n,m)$ is polarized with respect to $D$. Hence, $\mathcal{S}(a,b,c,k,n,m)$ is a partially commutative semigroup with commutation graph $E_n\vee K_m$.\vspace{.1cm}  
\end{enumerate}
In particular, the graph $E_n\vee K_m$ is representable over $K$ for all $n,m\geq0$.   
\end{prop}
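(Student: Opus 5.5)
The plan is to identify $\mathrm{End}(E^3)\supseteq\mathrm{Mat}_{3\times 3}(\mathbb{Z})$ and to prove polarization and freeness separately. Polarization reduces to a matrix identity, and freeness to a ping-pong argument in the spirit of Lemma \ref{lem:ping+pong}.

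\emph{Polarization.} I will use the standard dictionary between symmetric divisor classes on $E^3$ and quadratic forms. The class of $D_0=\pi_1^*(\mathcal{O})+\pi_2^*(\mathcal{O})+\pi_3^*(\mathcal{O})$ in $\mathrm{NS}(V)$ corresponds to the form $x_1^2+x_2^2+x_3^2$. For $M\in\mathrm{Mat}_{3\times3}(\mathbb{Z})$, the pullback $M^*D_0$ corresponds to the form attached to $M^{\mathrm{T}}M$.

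A direct computation gives $\alpha^{\mathrm{T}}\alpha=c^2I$ (using $a^2+b^2=c^2$) and $\beta^{\mathrm{T}}\beta=k^2c^2I$. Hence $\alpha^*D_0\equiv c^2D_0$ and $\beta^*D_0\equiv k^2c^2D_0$ in $\mathrm{NS}(V)$.

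To upgrade from algebraic to linear equivalence, note two facts:
\begin{itemize}
\item both sides are symmetric, since $D_0$ is symmetric and $M$ commutes with $[-1]$, so their difference is a symmetric element of $\mathrm{Pic}^0(V)$;
\item a symmetric element of $\mathrm{Pic}^0(V)$ is $2$-torsion.
\end{itemize}
Therefore, for $D=2D_0$, we get $\alpha^*D\sim c^2D$ and $\beta^*D\sim k^2c^2D$. Similarly $[q]^*D\sim q^2D$. Multiplicativity of pullback then shows that every semigroup in (1)--(4) is polarized, with $d(\alpha)=c^2$, $d(\beta)=k^2c^2$ and $d(g_j)=q_j^2$.

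\emph{Freeness in (1).} Since $\alpha/c$ and $\beta/(kc)$ lie in $SO_3(\mathbb{Q})$, freeness of $\langle\alpha,\beta\rangle$ is equivalent to freeness of the semigroup generated by two rational rotations about orthogonal axes ($z$ and $x$) with angle $\theta$, where $\cos\theta=a/c\neq0,\pm1$. I would first try to quote the classical result (Świerczkowski) that such rotations generate a free group. Failing that, I would run Lemma \ref{lem:ping+pong} with $\mathcal{S}_1=\langle\alpha\rangle$ and $\mathcal{S}_2=\langle\beta\rangle$ acting on $\mathbb{P}^2(\mathbb{Q})$, as follows.
\begin{itemize}
\item Fix a prime $p\mid c$; necessarily $p\equiv1\bmod 4$, and $p\nmid k$.
\item Represent points by primitive integer vectors and reduce them modulo $p$.
\item Modulo $p$, $\alpha$ has rank one: its third row vanishes, and its top block is multiplication by $a+bi$ on $\mathbb{Z}[i]/p\cong\mathbb{F}_p\times\mathbb{F}_p$, which kills exactly one factor. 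So every power $\alpha^e$ sends points outside a fixed kernel line into one fixed point $\ell_\alpha\in\mathbb{P}^2(\mathbb{F}_p)$ of the $xy$-plane. Likewise $\beta^e$ sends points into a fixed point $\ell_\beta$ of the $yz$-plane.
\item Take $X_i$ to be the points reducing to $\ell_\alpha$, resp.\ $\ell_\beta$, and choose the base point $x_0$ appropriately.
\end{itemize}
Hypothesis (1) of the lemma then amounts to checking that $\ell_\beta$ and $x_0$ avoid the kernel of $\alpha$, and symmetrically for $\beta$. Hypothesis (2) amounts to injectivity of $(s,y)\mapsto s\cdot y$. This is the step I expect to be the main obstacle. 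Distinct powers $\alpha^e,\alpha^{e'}$ must be separated, and for this I plan to track the exact $p$-adic content that $\alpha^e$ introduces on vectors reducing to $\ell_\beta$: it should grow with $e$, using that $p$ divides $a+bi$ but not $a-bi$ in $\mathbb{Z}[i]$. Cancellation of a common $s$ is automatic because all matrices are invertible over $\mathbb{Q}$.

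\emph{Parts (2)--(4).} For (2), the words $\alpha\beta^i$ with $1\le i\le n$ form a code in $\mathcal{F}(2)$: any product of them parses uniquely by cutting the word before each $\alpha$. Hence $\langle f_1,\dots,f_n\rangle\cong\mathcal{F}(n)$.

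For (3), the maps $[q_j]$ commute, and the $q_j^2$ are multiplicatively independent. Thus $[N]\mapsto N$ identifies $\langle g_j\rangle$ with $\mathcal{A}(m)$.

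For (4), the maps $[q]$ are central, so there is a surjection $\mathcal{F}(n)\times\mathcal{A}(m)\to\mathcal{S}$. For injectivity, suppose $W\circ[N]=W'\circ[N']$, where $W,W'$ are words in the $f_i$ (or $1$). Every word $W$ has determinant dividing a power of $kc$, and each $q_j>kc$ is prime. So the $q_j$-adic content of the integer matrix $W\cdot N$ equals $v_{q_j}(N)$, which forces $N=N'$. Multiplication by $N$ is invertible over $\mathbb{Q}$, so $W=W'$, and freeness from (2) finishes the argument.

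The commutation graph is then $E_n\vee K_m$: the $f_i$ pairwise do not commute by freeness, while every $g_j$ commutes with everything.
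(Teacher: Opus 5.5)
Your proposal is correct and follows the paper's route. Polarization comes from $M^{\mathrm{T}}M=dI$: you work in $\mathrm{NS}(V)$ and then use that symmetric classes in $\mathrm{Pic}^0(V)$ are $2$-torsion, while the paper computes directly in $\mathrm{Pic}(V)$ with the cube-theorem pairing, which is a harmless variation. Freeness comes from \'Swierczkowski's theorem, (2) from the prefix-code parsing, and (4) from a $q_j$-adic valuation comparison.

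Two small points. First, your claim that freeness of $\langle\alpha,\beta\rangle$ is ``equivalent'' to freeness of the rotations needs its one required direction stated explicitly. That direction strips the scalars $c$ and $kc$ by comparing determinants, as in the paper's Lemma~\ref{lem:det+trick}. Second, the mod-$p$ ping-pong fallback is unnecessary, and as written it breaks for non-primitive triples. There a prime $p\mid c$ can divide both $a$ and $b$, for example $(15,20,25)$ with $p=5$, where $\alpha\equiv 0 \bmod 5$.
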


We use the following polarization criterion for certain maps of powers of elliptic curves with respect to the product divisor $D$ above.  
\begin{lem}\label{lem:polarized+split+maps+abelian+varieties} Let $V=E^n$ for some elliptic curve $E$ and $n\geq1$ and let $D=2\sum_{i=1}^n\pi_i^*(\mathcal{O})$ where $\pi_i:V\rightarrow E$ is the projection map onto the $i$th component. If $f$ is an endomorphism of corresponding to a matrix $M\in \textup{Mat}_{n\times n}(\mathbb{Z})$ and $M^TM=dI$ for some $d\in\mathbb{Z}$ where $I$ is the identity matrix, then $f^*(D)\sim d D$.  \end{lem}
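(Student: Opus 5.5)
Since $D$ is symmetric, I will prove the statement by identifying $D$, up to linear equivalence, with a symmetric bilinear form on $\mathbb{Z}^n$ and then computing pullbacks as matrix congruences. The relevant dictionary comes from the theorem of the cube, in the following form. Every symmetric divisor class $L$ on $V=E^n$ gives a quadratic form $q_L$ on $\textup{Hom}(E,V)\supseteq \mathbb{Z}^n$, defined by $q_L(x)=\deg\big(\phi_x^*L\big)$, where $\phi_x:E\to V$ is $P\mapsto (x_1P,\dots,x_nP)$. For a symmetric class, $[m]^*L\sim m^2L$, so $q_L$ is homogeneous quadratic.

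The first step is to compute $q_D$. The composite $\pi_i\circ\phi_x$ is the endomorphism $[x_i]$ of $E$, and $[x_i]^*(\mathcal{O})$ has degree $x_i^2$. Therefore
\[
q_D(x)=2\sum_{i=1}^n x_i^2=2\,x^{\textup{T}}x .
\]

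The second step is functoriality. For an endomorphism $f$ given by $M$, we have $f\circ\phi_x=\phi_{Mx}$. Hence
\[
q_{f^*D}(x)=q_D(Mx)=2\,x^{\textup{T}}M^{\textup{T}}Mx=2d\,x^{\textup{T}}x=q_{dD}(x).
\]
The class $f^*D$ is again symmetric, because $f$ commutes with $[-1]$ and $D$ is symmetric. So $f^*D$ and $dD$ are two symmetric classes with the same quadratic form on $\mathbb{Z}^n$.

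The key step is to show that this forces $f^*D\sim dD$. First, I will argue that $f^*D-dD$ is algebraically equivalent to zero. For $E^n$, the Néron--Severi group is generated by three kinds of classes. The first are the classes $\pi_i^*\mathcal{O}$. The second are pullbacks of the diagonal-type divisors, the kernels of $(P_i,P_j)\mapsto P_i-P_j$. The third involve $\textup{End}(E)$ when $E$ has CM. The map $L\mapsto q_L$ on $\textup{NS}(V)$ is injective when we use all of $\textup{Hom}(E,V)=\textup{End}(E)^n$, with the degree computed via the Rosati/norm form. So the main obstacle is the possibly larger endomorphism ring in the CM case. To handle it, I will repeat the computation with $x\in\textup{End}(E)^n$. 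Since $M$ has integer entries, the identity $q_D(Mx)=2\sum_i \deg(\textstyle\sum_j M_{ij}x_j)$ still expands, via the hermitian form $\sum_i x_i^\dagger x_i$, to $2d\sum_i\deg x_i$. This uses only $M^{\textup{T}}M=dI$ with $M$ real, so it holds in the CM case as well.

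Once algebraic equivalence is in hand, $f^*D-dD$ lies in $\textup{Pic}^0(V)$ and is symmetric. Such a class is 2-torsion, since symmetric elements of $\textup{Pic}^0$ satisfy $[-1]^*L=L^{-1}=L$. To remove this 2-torsion ambiguity, I will use that each $\pi_i^*\mathcal{O}$ is an honest symmetric divisor. Instead of the coarse quadratic-form comparison, I will compare $f^*D$ and $dD$ in the finer group of symmetric line bundles via the theorem of the cube,
\[
L(x+y)+L(x-y)=2L(x)+2L(y).
\]
Concretely, I expect the cleanest route is to write $f^*\pi_i^*\mathcal{O}$ as the pullback of $\mathcal{O}$ under $\sum_j M_{ij}\pi_j$. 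I would then expand by the cube formula for a sum of homomorphisms. This expansion yields diagonal terms $M_{ij}^2\pi_j^*\mathcal{O}$ together with cross terms given by pullbacks of Poincaré-type classes along $(\pi_j,\pi_k)$, with coefficients $M_{ij}M_{ik}$. After summing over $i$, the cross-term coefficients become $(M^{\textup{T}}M)_{jk}=0$ for $j\ne k$, and the diagonal coefficients become $d$. This is where the factor 2 in $D$ is needed: the cube expansion produces half-integral cross terms, and doubling makes every term a genuine divisor class. With the factor of 2, the computation gives $f^*D\sim dD$ on the nose.
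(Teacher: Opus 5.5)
Your last paragraph is the paper's proof. You write $f^*\pi_i^*\mathcal{O}=g_i^*\mathcal{O}$ with $g_i=\sum_j M_{ij}\pi_j$, expand using the bilinearity of $B(\varphi,\psi)=(\varphi+\psi)^*\mathcal{O}-\varphi^*\mathcal{O}-\psi^*\mathcal{O}$ from the theorem of the cube, and sum over $i$ using $M^{\textup{T}}M=dI$. This already gives $f^*D\sim dD$ in $\Pic(V)$, so the quadratic-form/N\'eron--Severi step and the CM discussion are unnecessary.

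One small correction: the cross terms carry integer coefficients $M_{ij}M_{ik}$, not half-integral ones. The factor $2$ in $D$ is only needed for the paper's bookkeeping, which uses $B(\varphi,\varphi)\sim 2\varphi^*\mathcal{O}$ to avoid dividing by $2$ in $\Pic(V)$. Expanding $\varphi^*\mathcal{O}$ directly with $[a]^*\mathcal{O}\sim a^2\mathcal{O}$ works without it.
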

\begin{proof} Let $g_i:E^n\rightarrow E$ be given by $g_i=\pi_i\circ f$ so that
\begin{equation}\label{eq:polarized+split+maps+abelian+varieties1}
f^*(D)=f^*\Big(2\sum_{i=1}^n\pi_i^*(\mathcal{O})\Big)\sim2 \sum_{i=1}^nf^*(\pi_i^*(\mathcal{O}))\sim \sum_{i=1}^n2g_i^*(\mathcal{O}).
\end{equation} 
It therefore suffices to compute $2g_i^*(\mathcal{O})$ to compute $f^*(D)$. Now write $f$ in coordinates 
\[f(P_1,\dots,P_n)=\Big(\sum_{j=1}a_{1j}P_j,\dots, \sum_{j=1}a_{nj}P_j\Big)\] 
for some $P_1,\dots, P_n\in E$ and some $a_{ij}\in\mathbb{Z}$. In particular, it is clear that $g_i=\sum_{j=1}^n a_{ij}\pi_j$ for all $1\leq i\leq n$. Moreover, we may associate $f$ with the matrix $M=(a_{ij})\in \textup{Mat}_{n\times n}(\mathbb{Z})$ and assume that $M^T M=d I$ for some $d$; here $M^T$ denotes the transpose of $M$. Hence, we assume that  
\begin{equation}\label{eq:matrix+condition} 
(\textup{I})\;\; d=\sum_{i=1}^na_{ij}^2 \;\;\;\; \text{and}\;\;\;\;(\textup{II})\;\; 0=\sum_{i=1}a_{ij}a_{ik}
\end{equation}
for all $1\leq j\leq n$ for (I) and all $1\leq j\neq k\leq n$ for (II). Next, it follows from the Theorem of the Cube \cite[Theorem A.7.2.1]{hindry2013diophantine} that the pairing \vspace{.1cm}
\begin{equation}\label{eq:thm+of+cube}
B:\,\Hom(E^n,E)\times\Hom(E^n,E)\rightarrow \Pic(E^n),\;\;\;\;\;B(\varphi,\psi)=(\varphi+\psi)^*(\mathcal{O})-\varphi^*(\mathcal{O})- \psi^*(\mathcal{O}) 
\vspace{.1cm} 
\end{equation}
is symmetric and bilinear; here (unlike the results in \cite[\S A.7.2]{hindry2013diophantine} which we will use), by $\varphi+\psi$ for $\varphi,\psi\in \Hom(E^n,E)$ we mean the map $\varphi+\psi: E^n\rightarrow E$ given by $(\varphi+\psi)(x)=\varphi(x)+\psi(x)$. In particular, $B$ as defined in \eqref{eq:thm+of+cube} is clearly symmetric.     

We grant this bilinearity claim for now and complete the proof. In particular, the associated map $Q:\Hom(E^n,E)\rightarrow\Pic(E^n)$ given by $Q(\varphi)=B(\varphi,\varphi)$ is a quadratic form, and
\vspace{.05cm} 
\begin{equation}\label{eq:polarized+split+maps+abelian+varieties2}
\begin{split} 
Q(\varphi)&=B(\varphi,\varphi)\sim([2]\circ \varphi)^*(\mathcal{O})-2\varphi*(\mathcal{O})\sim\varphi^*([2]^*\mathcal{O})-2\varphi^*(\mathcal{O})\\[5pt]
&\sim\varphi^*(4\mathcal{O})-2\varphi^*(\mathcal{O})\sim4\varphi^*(\mathcal{O})-2\varphi^*(\mathcal{O})\sim2\varphi^*(\mathcal{O})\\[4pt] 
\end{split}
\end{equation}
holds for all $\varphi\in\Hom(E^n,E)$. Hence, we compute that  
\begin{equation}\label{eq:polarized+split+maps+abelian+varieties3}
\begin{split}
2g_i^*(\mathcal{O})&\sim Q(g_i)=Q\Big(\sum_{j=1}^n a_{ij}\pi_j\Big)\sim \sum_{j=1}^n a_{ij}^2 Q(\pi_j)+2\sum_{k<j}a_{ij}a_{ik}B(\pi_j,\pi_k)\\[5pt]
&\sim 2\sum_{j=1}^n a_{ij}^2 \pi_j^*(\mathcal{O})+2\sum_{k<j}a_{ij}a_{ik}B(\pi_j,\pi_k)
\end{split}
\end{equation}
holds for any fixed $1\leq i\leq n$. In particular, combining \eqref{eq:polarized+split+maps+abelian+varieties1}, \eqref{eq:matrix+condition}, and \eqref{eq:polarized+split+maps+abelian+varieties3}, we deduce that
\begin{equation*}
\begin{split}
f^*(D)&\sim \sum_{i=1}^n2g_i^*(\mathcal{O})
\sim2\sum_{i=1}^n\bigg(\sum_{j=1}^n a_{ij}^2 \pi_j^*(\mathcal{O})\bigg)+2\sum_{i=1}^n\bigg(\sum_{k<j}a_{ij}a_{ik}B(\pi_j,\pi_k)\bigg)\\[5pt]
&\sim 2\sum_{j=1}^n\bigg(\sum_{i=1}^n a_{ij}^2 \pi_j^*(\mathcal{O})\bigg)
+2\sum_{k<j}\bigg(\sum_{i=1}^n a_{ij}a_{ik}B(\pi_j,\pi_k)\bigg)\\[5pt]
&\sim 2\sum_{j=1}^n\bigg(\sum_{i=1}^n a_{ij}^2\bigg)\pi_j^*(\mathcal{O})
+2\sum_{k<j}\bigg(\sum_{i=1}^n a_{ij}a_{ik}\bigg)B(\pi_j,\pi_k)\\[5pt]
&\sim 2\sum_{j=1}^n d\pi_j^*(\mathcal{O})\sim d\Big(2\sum_{j=1}^n\pi_j^*(\mathcal{O})\Big)=d D  
\end{split}
\end{equation*}
as claimed. Finally, we return to the bilinearity of $B$. For simplicity, we fix some notation. Let $V=E^n$, let $c$ be any divisor class of $E$, let $a:E\times E\rightarrow E$ be the addition map $a(P,Q)=P+Q$, and let $\Delta: V\rightarrow V\times V$ be the diagonal map $\Delta(x)=(x,x)$. Likewise, let $p_1,p_2: V\times V\rightarrow V$ be the projections onto the first and second coordinates, respectively. Then \cite[Corollary A.7.2.4]{hindry2013diophantine} implies that the pairing $J:\Hom(V,E)\times \Hom(V,E)\rightarrow \Pic(V\times V)$ given by 
\begin{equation}\label{eq:other+pairing}
J(\varphi,\psi)=(a\circ (\varphi,\psi))^*c-p_1^*(\varphi^*c)-p_2^*(\psi^*c)
\end{equation}
is bilinear, where $(\varphi,\psi):V\times V\rightarrow E\times E$ is the map $(\varphi,\psi)(x,y)=(\varphi(x),\psi(y))$; see the paragraph above Corollary A.7.2.5 in \cite{hindry2013diophantine}. In particular, the pairing $B': \Hom(V,E)\times \Hom(V,E)\rightarrow \Pic(V)$ given by $B'(\varphi,\psi)=\Delta^*(J(\varphi,\psi))$ is bilinear also. In fact, by taking $c=[\mathcal{O}]$, we will see that $B(\varphi,\psi)=B'(\varphi,\psi)$ for all $\varphi,\psi\in \Hom(V,E)$, from which the claim follows. To see this, simply apply $\Delta^*$ to \eqref{eq:other+pairing}, use that $\Delta^*$ is linear, that $\Delta^*p_i^*=(p_i\circ\Delta)^*=\textup{Id}_V^*$, and that $a\circ (f,g)\circ \Delta=f+g$.         
\end{proof}
\begin{remark}\label{rmk:pyth} Note that to any pythagorean triple $a^2+b^2=c^2$ for some $a,b,c\in\mathbb{Z}$, we can construct two matrices, 
\[\alpha(a,b,c)=
\begin{bmatrix}
     a&-b &0 \\
     b&\;a&0 \\
     0&\;0&c 
\end{bmatrix}
\;\;\;\;
\text{and}
\;\;\;\; 
\beta(a,b,c)=
\begin{bmatrix}
     c&0&\;0\\
     0&a&-b\\
     0&b&\;a 
\end{bmatrix},
\vspace{.1cm} 
\]
corresponding to polarized endomorphisms of $V=E^3$ with respect to the same divisor $D$. 
\end{remark}
In particular, we can produce many polarized semigroups with respect to the divisor $D$; for instance by aplying Euclid's formula $(m,n)\rightarrow(a,b,c):=(m^2-n^2,2mn,m^2+n^2)$ or by scaling known triples. Moreover, it is likely that many of the semigroups produced in this way are free on the given basis. However, proving this fact can be very difficult (if not undecidable). On the other hand, by scaling $\alpha$ and $\beta$ in Remark \ref{rmk:pyth} by $1/c$, we obtain matrices in the special orthogonal group $\textup{SO}_3(\mathbb{R})$, where additional geometric techniques are available to prove freeness. With this in mind, we have the following simplified version of the main result in \cite{swierczkowski1994class}.     
\begin{thm}\label{thm:free+rotations}
Let $a,b,c$ be integers satisfying $a^2+b^2=c^2$. Define the matrices 
\[U_1(a,b,c)=
\begin{bmatrix}
     \frac{a}{c}&\frac{-b}{c} &0 \\
     \frac{b}{c}&\;\frac{a}{c}&0 \\
     0&\;0&1 
\end{bmatrix}
\;\;\;
\text{and} 
\;\;\;\;
U_2(a,b,c)=
\begin{bmatrix}
     1&0&\;0\\
     0&\frac{a}{c}&\frac{-b}{c}\\
     0&\frac{b}{c}&\;\frac{a}{c} 
\end{bmatrix}
\vspace{.1cm} 
.\]
Then the subgroup of $\textup{SO}_3(\mathbb{R})$ generated by $U_1$ and $U_2$ is free with free generators $U_1$ and $U_2$ if and only if $a/c\notin\{0,\pm{1},\pm{1/2}\}$.    
\end{thm}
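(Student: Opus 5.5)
We would prove the two directions separately. The ``only if'' direction is elementary; the ``if'' direction carries all the difficulty.

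\emph{Reduction and ``only if''.} The matrices $U_1,U_2$ depend only on the ratios $a/c$ and $b/c$. So we may divide out $\gcd(a,b,c)$ and assume the triple is primitive, with $c>0$. Geometrically, $U_1$ is the rotation about the $z$-axis and $U_2$ the rotation about the $x$-axis, both through the angle $\theta$ with $\cos\theta=a/c$ and $\sin\theta=b/c$.
\begin{itemize}
\item If $a/c=\pm1$, then $b=0$ and $U_1=\pm$-type rotation by $0$ or $\pi$, so $U_1^2=I$.
\item If $a/c=0$, then $\theta=\pm\pi/2$ and $U_1^4=I$.
\item The values $\pm1/2$ cannot occur, since $b/c$ would then be $\pm\sqrt3/2\notin\mathbb{Q}$. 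They are covered anyway, because they give $U_1^6=I$.
\end{itemize}
In each case $U_1$ has finite order, so $\langle U_1,U_2\rangle$ is not free on $U_1,U_2$.

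\emph{``If'': quaternion set-up.} Now assume the triple is primitive with $a,b\neq0$, so $c>1$ is odd. Write $a+bi=\varepsilon\beta^2$ in $\mathbb{Z}[i]$ with $\varepsilon$ a unit and $N(\beta)=c$. This is possible because $a+bi$ and $a-bi$ are coprime (primitivity) and their product is $c^2$. Then rotation by $\theta$ corresponds to multiplication by $\beta/\bar\beta$, up to the harmless unit. Transfer this to Lipschitz quaternions $\mathbb{Z}\langle 1,\mathbf i,\mathbf j,\mathbf k\rangle$:
\begin{itemize}
\item set $q_1=x+y\mathbf k$ and $q_2=x+y\mathbf i$, where $\beta=x+yi$ (adjusting by the unit $\varepsilon$ if necessary);
\item conjugation $v\mapsto q_\ell vq_\ell^{-1}$ on pure quaternions then realizes $U_\ell$ (a rotation by $2\arg\beta=\theta$);
\item $U_\ell^{-1}$ corresponds to $\bar q_\ell$;
\item each $q_\ell$ is primitive (not divisible by any rational integer) of norm $c$.
\end{itemize}
A freely reduced nonempty word $w$ in $U_1^{\pm1},U_2^{\pm1}$ of length $n$ corresponds to a product $Q_w$ of $n$ quaternions from $\{q_1,\bar q_1,q_2,\bar q_2\}$, of norm $c^n$. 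If $w$ acted as the identity, then $Q_w$ would commute with every pure quaternion, so $Q_w\in\mathbb{Z}$, and necessarily $Q_w=\pm c^{n/2}$.

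\emph{Main step: primitivity of $Q_w$.} It therefore suffices to show that $Q_w$ is primitive, i.e.\ not divisible by any rational prime. This is the step we expect to be the main obstacle. We would fix a prime $p\mid c$; necessarily $p\equiv1\pmod 4$.
\begin{itemize}
\item Use unique factorization of primitive quaternions of norm $p^m$ (in the Hurwitz order, modulo units), in the form of Lipschitz/Dickson's theorem, as in the classical Lubotzky--Phillips--Sarnak and Świerczkowski arguments.
\item Decompose $q_\ell=\pi_\ell\cdot(\text{rest})$ with $\pi_\ell$ of norm $p^{v_p(c)}$.
\item A product of norm-$p$ pieces fails to be primitive only if two adjacent pieces cancel, $\pi\bar\pi=p$. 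Within a reduced word this happens only between $q_\ell$ and $\bar q_\ell$, which are adjacent only if $w$ is not reduced.
\item One must also rule out cancellation between $q_1$- and $q_2$-factors. This is where $a/c\notin\{0,\pm1\}$ enters. It guarantees that the norm-$p$ left divisors of $q_1$, $\bar q_1$, $q_2$, $\bar q_2$ are pairwise non-associate (they lie in different commutative subrings $\mathbb{Z}[\mathbf k]$ and $\mathbb{Z}[\mathbf i]$, and $y\neq0$).
\end{itemize}
We would first verify this non-associativity directly by reducing modulo $p$. We would then run an induction on $n$ showing that $Q_w\not\equiv0\pmod p$ and that its ``first-letter'' divisor is determined by the first letter of $w$, much like Step~1 of Lemma~\ref{lem:ping+pong}.

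\emph{Conclusion.} Given primitivity, $Q_w$ cannot equal the non-primitive integer $\pm c^{n/2}$ (for $n\geq1$). Hence no nonempty reduced word is trivial, and $\langle U_1,U_2\rangle$ is free on $U_1,U_2$.
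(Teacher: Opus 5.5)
Your proposal is correct in outline. It takes a genuinely different route from the paper, which does not prove Theorem~\ref{thm:free+rotations} at all: the paper imports it as the Pythagorean special case of \'{S}wierczkowski's theorem \cite{swierczkowski1994class} on two rotations, through an angle with rational cosine, about perpendicular axes.

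You instead give a self-contained arithmetic proof. You lift $U_1^{\pm1},U_2^{\pm1}$ to integral quaternions and observe that a reduced word acting trivially forces $Q_w\in\mathbb{Z}$ with $Q_w^2=N(Q_w)$. Hence $p\mid Q_w$ for every prime $p\mid c$, and you rule this out by a ping-pong modulo a single such $p$. This is the general form of the mod-$5$ kernel/range computation of \cite{Tao} that the paper mentions in its closing remark for $(3,4,5)$. Your argument shows that the exclusion of $\pm\tfrac12$ is vacuous here, that for a primitive triple the hypothesis just says $c>1$, and it gives the stronger conclusion $p\nmid Q_w$ for every nonempty reduced word. The citation buys brevity and the full theorem with possibly irrational $\sin\theta$ (e.g.\ $\cos\theta=\tfrac13$), which your Gaussian-integer set-up does not reach. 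The paper, however, only ever uses the Pythagorean case.

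Three points need tightening.

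\textbf{The unit $\varepsilon$.} It is not always harmless. If $a$ is even then $b$ is odd and $\varepsilon=\pm i$. In that case no Gaussian integer $\gamma$ of norm $c$ satisfies $\gamma/\bar\gamma=\gamma^2/c=(a+bi)/c$, since $\gamma^2$ has even imaginary part. For $(4,3,5)$, none of the norm-$5$ elements $x+y\mathbf{k}$ realizes $U_1$; the correct lift is $3+\mathbf{k}$, of norm $10$. In general, take $\gamma=(1\pm i)\beta$ when $\varepsilon=\pm i$, so that $\gamma^2=2(a+bi)$. This $\gamma$ is primitive of norm $2c$, and since $p$ is odd nothing downstream changes. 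Still, your assertion that $q_\ell$ has norm $c$ is false in this case.

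\textbf{Where the hypothesis enters.} The condition $a/c\notin\{0,\pm1\}$ is needed only to produce a prime $p\mid c$; non-associativity is then automatic. Identify the Lipschitz order modulo $p$ with $M_2(\mathbb{F}_p)$. The reduction of $q_1$ has rank one, with image one eigenline of $\mathbf{k}$ and kernel the other, and $\bar q_1$ has these swapped. The same holds for $q_2,\bar q_2$ with the eigenlines of $\mathbf{i}$. No eigenline of $\mathbf{k}$ is an eigenline of $\mathbf{i}$, because $\mathbf{i}\mathbf{k}=-\mathbf{k}\mathbf{i}$ and $p\neq2$.

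\textbf{The cancellation mechanism.} The phrase ``two adjacent norm-$p$ pieces cancel'' does not literally describe what happens. The prime-to-$p$ part of one letter sits between its $p$-part and that of the next letter. It is harmless only because it lies in the same commutative subring and hence preserves those eigenlines. The clean form of your main step is the rank-one induction you sketch at the end, with $s_1$ the first letter of $w$:
\begin{itemize}
\item claim: $Q_w\not\equiv0\pmod p$ and $\operatorname{im}(Q_w\bmod p)=\operatorname{im}(s_1\bmod p)$;
\item inductive step: $\operatorname{im}(s_2\bmod p)\neq\ker(s_1\bmod p)=\operatorname{im}(\bar s_1\bmod p)$ whenever $s_2\neq\bar s_1$.
\end{itemize}
Written this way, no Hurwitz or Dickson factorization theory is needed.
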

\begin{remark}\label{rmk:free+group+implies+semigroup} We note that since $U_1$ and $U_2$ generate a free group $G$, the semigroup $\langle U_1,U_2 \rangle$ they generate is free. This follows immediately from the Normal Form Theorem for free groups; see \cite[Theorem 4, p.5]{Cohen1989}: every string $w_1,\dots w_n$ of elements $w_i\in\{U_1,U_2\}$ contains no inverses and so represents a \emph{reduced word} when viewed of as an element of the free group $G$. Hence, by the uniqueness of reduced words in $G$, the subsemigroup $\langle U_1,U_2 \rangle$ of $G$ is free.    
\end{remark}
In particular, it follows from Theorem \ref{thm:free+rotations} that many of the semigroups generated by $\alpha(a,b,c)$ and $\beta(a,b,c)$ in Remark \ref{rmk:pyth} are free. However, freeness is not the only property we desire: in order to produce dirichlet series generating functions with well-behaved poles, we need endomorphisms with multiplicatively independent degrees, which $\alpha(a,b,c)$ and $\beta(a,b,c)$ do not furnish. Nevertheless, scaling by a suitable integer $k$ coprime to $c$, we can obtain two endomorphisms, corresponding to $\alpha(a,b,c)$ and $\beta(ak,bk,ck)$, with multiplicatively independent degrees. Moreover, when done carefully, scaling preserves freeness; in what follows, $\SL_n(K)$ denotes the matrices of determinant one over a field $K$.  
\begin{lem}\label{lem:det+trick} Let $U_1,\dots, U_r\in \SL_n(K)$ and $c_1,\dots,c_r\in K^\times$ for some $r\geq1$ and $n\geq2$. Moreover, assume that $\langle U_1,\dots, U_r \rangle$ is a free semigroup and that $c_1,\dots, c_r$ are multiplicatively independent. Then $\langle c_1U_1,\dots, c_rU_r \rangle $ is a free semigroup.    
\end{lem}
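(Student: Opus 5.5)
Suppose two words $w=x_{i_1}\cdots x_{i_p}$ and $w'=x_{j_1}\cdots x_{j_q}$ in the free semigroup $\mathcal{F}(r)$ map to the same element of $\langle c_1U_1,\dots,c_rU_r\rangle$. We want to show $w=w'$. The plan is to use the determinant to recover the letter counts. Multiplicative independence of the $c_i$ then forces equal counts, and freeness of the $U_i$ finishes the argument.

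Write $e_i(w)$ for the number of occurrences of $x_i$ in $w$. Since scalars commute with all matrices, we have
\[
\prod_{k}(c_{i_k}U_{i_k})=\Big(\prod_{i=1}^r c_i^{e_i(w)}\Big)\,U_{i_1}\cdots U_{i_p},
\]
and similarly for $w'$. Taking determinants and using $\det U_i=1$ gives
\[
\prod_{i=1}^r c_i^{n e_i(w)}=\prod_{i=1}^r c_i^{n e_i(w')},
\]
that is, $\prod_i c_i^{n(e_i(w)-e_i(w'))}=1$. By multiplicative independence of the $c_i$, this forces $n(e_i(w)-e_i(w'))=0$, so $e_i(w)=e_i(w')$ for all $i$.

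With equal letter counts, the scalar prefactors $\prod_i c_i^{e_i(w)}$ and $\prod_i c_i^{e_i(w')}$ coincide and are nonzero, so we may cancel them. This leaves $U_{i_1}\cdots U_{i_p}=U_{j_1}\cdots U_{j_q}$. Since $\langle U_1,\dots,U_r\rangle$ is free on $U_1,\dots,U_r$, the words agree: $w=w'$. Hence the map $x_i\mapsto c_iU_i$ from $\mathcal{F}(r)$ is injective, and the semigroup is free on $c_1U_1,\dots,c_rU_r$.

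The only step needing care is the use of multiplicative independence. It must be read as "$\prod c_i^{m_i}=1$ with $m_i\in\mathbb{Z}$ implies all $m_i=0$", which is what allows us to divide out the factor $n\geq 2$ in the exponents. If the hypothesis were only independence modulo torsion, the step would still go through, since the exponent vector $(n(e_i(w)-e_i(w')))_i$ is an integer vector and the conclusion is just that it vanishes. Beyond this, the argument is bookkeeping.
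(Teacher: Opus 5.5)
Your proof is correct and follows the paper's argument: pull the scalars out front, take determinants to get $\prod_i c_i^{n e_i(w)}=\prod_i c_i^{n e_i(w')}$, use multiplicative independence to equate the letter counts, cancel the common nonzero scalar, and conclude from the freeness of $\langle U_1,\dots,U_r\rangle$. Your closing remark about independence modulo torsion is unnecessary, since for integer exponent vectors the two notions are equivalent.
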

\begin{proof} Let $U_1,\dots, U_r$ and $c_1,\dots, c_r$ be as in the statement and define $A_i=c_i U_i$ for $1\leq i\leq r$. Now suppose that
\begin{equation}\label{eq:word+equality}
A_{i_1}\cdots A_{i_{s}}=A_{j_1}\cdots A_{j_{t}}.
\end{equation}
Then, substituting $A_{i_p}=c_{i_p}U_{i_p}$ and moving the constants to the left, we deduce that 
\begin{equation}\label{eq:det+trick}
c_1^{e_1}\cdots c_r^{e_{r}}(U_{i_1}\cdots U_{i_{s}})=c_1^{h_1}\cdots c_r^{h_r}(U_{j_1}\cdots U_{j_{t}}),
\end{equation}
where $e_i=\#\{i_k:i_k=i\}$ and $h_j=\#\{j_k:j_k=j\}$. Then, by taking determinants, we see 
\[c_1^{ne_1}\cdots c_r^{ne_{r}}=(c_1^{e_1}\cdots c_r^{e_{r}})^n=(c_1^{h_1}\cdots c_r^{h_r})^n=c_1^{nh_1}\cdots c_r^{nh_r}.\]
Moreover, by our assumption that the $c$'s are multiplicatively independent, we deduce that $ne_k=n h_k$ for all $1\leq k\leq r$. Thus $e_k=h_k$ for all  $1\leq k\leq r$. But then \eqref{eq:det+trick} implies that $U_{i_1}\cdots U_{i_{s}}=U_{j_1}\cdots U_{j_{t}}$, so our assumption that $\langle U_1,\dots, U_r \rangle$ is free implies that $s=t$ and $i_p=j_p$ for all $1\leq p\leq s$. Thus, \eqref{eq:word+equality} is a trivial relation as desired.         
\end{proof}
Finally, we note the following trick for constructing free semigroups of arbitrary rank within free semigroups of rank two.  
\begin{lem}\label{lem:free+trick} If $\langle A, B\rangle$ is free and $n\geq2$, then $\langle AB,AB^2,\dots,AB^n \rangle$ is a free.    
\end{lem}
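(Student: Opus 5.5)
The plan is to define the semigroup homomorphism $\phi\colon\mathcal{F}(n)\to\langle A,B\rangle$ by $x_i\mapsto AB^i$ and show it is injective. Suppose $AB^{i_1}\cdots AB^{i_s}=AB^{j_1}\cdots AB^{j_t}$ with all $i_p,j_q\in\{1,\dots,n\}$. Since $\langle A,B\rangle$ is free on $A,B$, the two words in the letters $A,B$ obtained by expanding each side,
\[A\,B^{i_1}A\,B^{i_2}\cdots A\,B^{i_s}\qquad\text{and}\qquad A\,B^{j_1}A\,B^{j_2}\cdots A\,B^{j_t},\]
must coincide letter by letter. The task is then to show that such a word decodes uniquely into a sequence $(i_1,\dots,i_s)$.

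The decoding runs as follows. Every exponent satisfies $i_p\geq1$, so the word splits uniquely into maximal blocks $AB^{e}$: each occurrence of $A$ starts a block, and the run of $B$'s that follows it, up to the next $A$ or the end of the word, has length $e\geq1$. As a result, $s$ equals the number of $A$'s in the word, and $i_p$ equals the length of the $p$-th maximal run of $B$'s. Both quantities are determined by the word. Applying the same reasoning to the right-hand side gives $s=t$ and $i_p=j_p$ for every $p$, and that is injectivity.

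One should also state explicitly why freeness is being used here. In the paper's convention, freeness of $\langle A,B\rangle$ means that distinct words in $A,B$ give distinct elements, so equal elements correspond to identical words. This holds even when $A$ and $B$ are not yet known to be distinct or to have infinite order, because freeness in the sense of this section already encodes both facts. The argument is short and I do not expect a real obstacle. The point that needs care is that each exponent is at least $1$: this is what prevents two $A$'s from being adjacent, and adjacent $A$'s would be exactly what makes the block decomposition ambiguous.
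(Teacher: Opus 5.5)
Your proof is correct and is essentially the paper's argument. Both proofs use freeness of $\langle A,B\rangle$ to reduce to equality of words in $A,B$, and both then use that every exponent is at least $1$. The paper matches the leftmost blocks $AB^{i}A\cdots=AB^{j}\cdots$ and left-cancels inductively, while you read off all blocks at once from the positions of the $A$'s.

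One correction to your side remark: the paper's formal definition of freeness (the map from $\langle A\rangle*\langle B\rangle$ being an isomorphism) does not by itself force $B$ to have infinite order. If $B^2=B$, then $AB=AB^2$ and the lemma fails. So you are really using the stronger reading $\mathcal{F}(2)\cong\langle A,B\rangle$, as the paper's own proof does when it deduces $i=j$ from $AB^i=AB^j$; that reading holds in the polarized setting where the lemma is applied.
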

\begin{proof} Suppose that $\theta_r\cdots \theta_1=\tau_s\cdots \tau_1$ for some $\theta_i,\tau_j\in\{AB,AB^2,\dots,AB^n\}$ and write $\theta_r=AB^{i}$ and $\tau_s=AB^{j}$. Note that if $\max\{r,s\}=1$, then $AB^{i}=AB^{j}$ so that $i=j$ by the freeness of $\langle A, B\rangle$. Therefore, we can assume that either $r$ or $s$ is at least $2$. Without loss, assume that $r\geq2$ so that 
\[AB^{i}Aw_1=AB^j w_2\]
for some $w_1\in\langle A,B\rangle$ and $w_2\in \langle A,B\rangle\cup\{\textup{I}\}$. But again by the freeness of $\langle A, B\rangle$, it must be the case that $i=j$ and $w_2\neq I$. Thus, $\theta_r=\tau_s$ and $s\geq2$. On the other hand, since free semigroups are left cancellative, we deduce that $\theta_{r-1}\cdots \theta_1=\tau_{s-1}\cdots \tau_1$. In particular, continuing in this way inductively, we see that $\langle AB,AB^2,\dots,AB^n \rangle$ is a free semigroup, as claimed.      
\end{proof}
We now have the tools in place to prove that $E_n\vee K_m$ is a representable graph over any number field. 
\begin{proof}[(Proof of Proposition \ref{prop:abelian+examples})]         Let $E$ be an elliptic curve defined over $K$, let $V=E^3$, let $(a,b,c)$ be a non-trivial pythagorean triple, and let $k\geq2$ be an integer satisfying $\gcd(c,k)=1$. Define endomorphisms $\alpha(a,b,c)$ and $\beta(ka,kb,kc)$ using the notation from Remark \ref{rmk:pyth}. In particular, Lemma \ref{lem:polarized+split+maps+abelian+varieties} implies that $\alpha(a,b,c)$ and $\beta(ka,kb,kc)$ are both polarized with respect to the divisor $D$. Moreover, these endomorphisms are both defined over $K$. Note next, in the notation from Theorem \ref{thm:free+rotations}, that $c\, U_1(a,b,c)=\alpha(a,b,c)$, that $kc\, U_2(a,b,c)=\beta(ka,kb,kc)$, and that $\big\langle U_1(a,b,c),U_2(a,b,c)\big\rangle$ is a free semigroup; see also Remark \ref{rmk:free+group+implies+semigroup}. Moreover, since $\gcd(k,c)=1$ the scalars $c$ and $ck$ are multiplicaitvely independent. Therefore, Lemma \ref{lem:det+trick} implies that $\big\langle \alpha(a,b,c),\beta(ka,kb,kc)\big \rangle$ is a free semigroup, which completes the proof of statement (1). However, for any $n\geq2$ it follows from Lemma \ref{lem:free+trick} that $\langle f_1,\dots,f_n\rangle$ is free semigroup, where $f_i=\alpha(a,b,c)\beta(ak,bk,ck)^i$ for $1\leq i\leq n$. Moreover, these endomorphisms are also polarized with respect to $D$, since $\alpha$ and $\beta$ are. Thus we have proved statement (2). Now let $q_1,\dots, q_m$ be distinct primes greater than $ck$. First, it is clear that the maps $g_i=[q_i]$, corresponding to the multiplication by $q_i$ maps on $V$, are polarized with respect to $D$ and central, since they correspond to the matrices of the form $q_i I$. Moreover, it is straightforward to check that the determinant map $\det: \langle q_1I,\dots,q_m I\rangle\rightarrow \langle q_1^3,\dots, q_m^3\rangle$ is a semigroup isomorphism; only injectivity requires an justification, but this follows immediately from the multiplicative independence of the $q$'s (and so also of their cubes) and the fact that these matrices commute (so any word is uniquely characterized by the multiplicity of each generator). Thus, we have established statement (3). Finally, to prove statement (4), it suffices to prove that the map
\begin{equation}\label{eq:map+decomp}
\langle f_1,\dots,f_n \rangle\times \langle g_1,\dots,g_m\rangle\rightarrow \langle f_1,\dots,f_n,g_1,\dots,g_m\rangle , \;\;\text{by}\;\;(F,G)\rightarrow F\circ G
\end{equation}
is a semigroup isomorphism. Surjectivity is clear: since the $g_i$'s are central, we may move any compositional factor of the $g$'s to the far right side of the expression. On the other hand, suppose that $F_1\circ G_1=F_2\circ G_2$ for some $F_1,F_2\in \langle f_1,\dots,f_n \rangle$ and $G_1,G_2\in \langle g_1,\dots,g_m \rangle$. Let $e_{ij}$ be the number of appearances of $g_i$ in the word $G_j$. Then taking determinants of the corresponding matrices gives: 
\[a_1 q_1^{3e_{11}}\cdots q_m^{3e_{m1}}=a_2 q_1^{3e_{12}}\cdots q_m^{3e_{m2}}\]
for some $a_1,a_2\in\langle c,ck\rangle$. But by construction, the $q$'s are primes strictly greater than $ck$ (which is strictly greater than $c$ also as $k\geq2$). In particular, the prime factors of $a_1$ and $a_2$ cannot contain any of the $q$'s. Hence, $e_{i1}=e_{i2}$ for all $1\leq i\leq m$ follows from unique factorization. Therefore, it must be the case that $G_1=G_2$ since $\langle g_1,\dots,g_m \rangle$ is commutative. Finally, again viewing $\langle f_1,\dots,f_n,g_1,\dots,g_m\rangle$ inside $\GL_n(\mathbb{Q})$ which is cancellative, we deduce that $F_1=F_2$ as well. Thus the map in \eqref{eq:map+decomp} is an isomorphism as claimed.        
\end{proof}
\begin{remark} Alternatively, when $(a,b,c)=(3,4,5)$ and $k\equiv 1\Mod{5}$, then one can prove the freeness of $\langle \alpha ,\beta\rangle$ by adapting the argument in the Appendix of \cite{Tao} for the matrices $A,B\in \textup{SO}(3)$, given there. Namely, the reductions in this case of the matrices $\alpha$ and $\beta$ modulo $5$ are identical to the matrices $5A$ and $5B$ in \cite{Tao}, respectively. In particular, the kernel and range calculations given in \cite{Tao} for $5A$ and $5B$ acting on $\mathbb{F}_5^3$ also hold for the matrices $\alpha$ and $\beta$. Hence, by the same logic as in \cite{Tao}, the semigroup $\langle \alpha,\beta \rangle$ is free. In fact, Tao proves (similar to \cite{swierczkowski1994class}) the stronger claim that these matrices generate a free group, not just semigroup.   
\end{remark}
\bibliographystyle{plain}
\bibliography{main}

\end{document}